\theoremstyle{plain}
\newtheorem*{main*}{Main Theorem}
\newtheorem{theorem}{Theorem}
\newtheorem{corollary}[theorem]{Corollary}
\newtheorem{proposition}[theorem]{Proposition}
\newtheorem{lemma}[theorem]{Lemma}
\theoremstyle{definition}
\newtheorem{remark}[theorem]{Remark}
\newtheorem{claim}[theorem]{Claim}
\DeclareMathOperator{\hh}{h}
\DeclareMathOperator{\Res}{Res}\DeclareMathOperator{\len}{length}\newcommand{\I}{\mathcal{I}}
\newcommand{\A}{\sf{{A}}}
\newcommand{\B}{\sf{B}}
\newcommand{\C}{\sf{C}}  
\newcommand{\LL}{{\mathcal{L}}_3}
\newcommand{\NN}{{\mathbb{N}}}
\newcommand{\FF}{{\mathbb{F}}}
\newcommand{\QQ}{{\mathbb{Q}}}
\newcommand{\PP}{{\mathbb{P}}}
\newcommand{\KK}{{\mathbb{K}}}
\newcommand{\vdim}{{\mathrm{vdim}}}
\newcommand{\ch}{{\mathrm{char}}}
\date{}
\begin{document}
\title[Quintuple points]{Postulation of general 
quintuple \\fat point schemes in $\mathbb{P}^3$}

\author{E. Ballico}\address{Dept.\ of Mathematics\\ University of      
Trento\\38123 Povo (TN), 
Italy}\email{ballico@science.unitn.it}
\author{M. C. 
Brambilla}\address{Dip.\ di Scienze Matematiche\\
Universit\`a Politecnica delle Marche\\
I-60131 Ancona, Italy}
\email{brambilla@dipmat.univpm.it}
\author{F. Caruso}\address{Dept.\ of Mathematics\\ University of 
Trento\\38123 Povo (TN), Italy}\email{caruso@science.unitn.it}
\author{M. Sala}
\address{Dept.\ of Mathematics\\ University of Trento\\38123 Povo 
(TN), Italy}\email{sala@science.unitn.it}
\subjclass{14N05; 15A72; 65D05}
\keywords{polynomial interpolation; fat point; zero-dimensional 
scheme; projective space}

\begin{abstract}
We study the postulation of a general union $Y$ of double, triple, 
quartuple and quintuple points of $\mathbb {P}^3$.
In characteristic $0$, we prove
that $Y$ has good postulation in degree $d\ge 11$.
The proof is based on the combination of the Horace differential
lemma with a computer-assisted proof. 
We also classify the exceptions in degree $9$ and $10$.
\end{abstract}

\maketitle

\section{Introduction}

Let $\KK$ be a field of characteristic $0$, $n\in \NN$ and 
$\mathbb {P}^n=\PP^n(\KK)$.
In this paper we study the postulation of general fat point schemes 
of $\PP^3$ with multiplicity up to $5$.
A {\em fat point} $mP$ is a zero dimensional subscheme of $\mathbb {P}^3$
supported on a point $P$ and with $({\mathcal {I}}_{P,\mathbb {P}^3})^m$ as
its ideal sheaf. 
A {\em general fat point scheme} 
$Y=m_1P_1+\ldots+m_kP_k$, with
$m_1\ge\ldots\ge m_k\ge1$ is a general 
zero-dimensional scheme such
that its support $Y_{\textrm{red}}$ is a union of 
$k$ points and for each
$i$ the connected component of $Y$ supported 
on $P_i$ is the fat point
$m_iP_i$. 
We say that the multiplicity of 
$Y$
is the maximal multiplicity, $m_1$, of its components.

Studying the 
postulation of $Y$ means to compute the dimension of the space
of 
hypersurfaces of any degree containing the scheme $Y$. In other
words 
this problem is equivalent to
computing the dimension $\delta$ of the space of homogeneous 
polynomials of any degree
vanishing at each point $P_i$ and with all 
their derivatives, up to multiplicity $m_i-1$, vanishing at $P_i$.
We say that $Y$ has good postulation if $\delta$ is the
expected dimension, that is, 
either the difference between the dimension of the polynomial space 
and the number of imposed conditions or just the dimension of 
the polynomial space (when $\delta$ would exceed it).

This problem was investigated by many authors in the case of 
$\mathbb {P}^2$,
where we have the important Harbourne-Hirschowitz 
conjecture
(see \cite{ciliberto} for a survey).
In the case of 
$\mathbb {P}^n$, for $n\ge 2$, the celebrated Alexander-Hirschowitz 
theorem gives a
complete answer  in the case of double points, that 
is when $m_i=2$ for any $i$
(\cite{AHinv,AHjag}, for a survey 
see \cite{BO}).
For arbitrary multiplicities and arbitrary 
projective
varieties there is a beautiful asymptotic theorem 
by
Alexander and Hirschowitz \cite{AH}.

Here we focus on the case of general fat point schemes
$Y\subset\mathbb {P}^3$. In this case a general 
conjecture which characterizes all the
general fat point schemes not having good postulation was proposed 
by Laface and Ugaglia in \cite{laface-ugaglia}.
The good postulation of general fat point schemes of multiplicity 
$4$ was proved for degrees $d\ge 41$ in \cite{bb} by the first two authors. 
Then Dumnicki made a real breakthrough.
In particular he showed, in \cite{dumnicki2}, how to check the cases with degree  
$9 \le d \le 40$. 
Stimulated by his results, we consider now the case of fat point schemes 
of multiplicity $5$ and we solve completely the problem of the good postulation.
Indeed we prove the following theorem.
\begin{theorem}\label{i1}
Let $\PP^3=\PP^3(\KK)$, where $\KK$ is a field of characteristic $0$. 
Fix non-negative integers $d, 
w, x, y, z$ such that $d \geq 11$. Let $Y\subset \PP^3$ 
be a general union of $w$ 5-points, $x$ 4-points, $y$ 3-points and $z$ 
2-points. Then $Y$ has good postulation with respect to degree-$d$ 
forms.
\end{theorem}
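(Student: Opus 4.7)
My plan is to establish Theorem \ref{i1} by induction on the degree $d$, using the classical méthode d'Horace refined by the Alexander--Hirschowitz differential lemma, and closing the low-degree and boundary cases by computer-assisted linear algebra. The basic tool is the following. Fix a general hyperplane $H\subset \PP^3$ and, for a subscheme $Z\subset \PP^3$, denote by $\Res_H Z$ the residual scheme and by $\mathrm{Tr}_H Z$ the schematic trace on $H$. The Castelnuovo exact sequence
\[
0 \to \I_{\Res_H Z}(d-1) \to \I_Z(d) \to \I_{\mathrm{Tr}_H Z, H}(d) \to 0
\]
shows that if both $\Res_H Z$ and $\mathrm{Tr}_H Z$ have good postulation in their respective degrees, then so does $Z$ in degree $d$. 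This reduces a three-dimensional problem in degree $d$ to a three-dimensional problem in degree $d-1$ plus a planar problem in degree $d$.

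The first move is to specialize a carefully chosen subset of the fat points of $Y$ onto $H$. An $m$-fat point placed on $H$ contributes an $m$-fat point of $H\cong \PP^2$ to the trace and an $(m-1)$-fat point of $\PP^3$ to the residual. Given $(d,w,x,y,z)$, the counts of $5$-, $4$-, $3$- and $2$-points to specialize are chosen to balance the degree-$d$ polynomial space on $H$ against the trace scheme, so that the planar side is handled by the Harbourne--Hirschowitz theory (or by direct counting when the conditions are independent), while the residual reduces to a multiplicity-at-most-$4$ problem on $\PP^3$ accessible through the results of \cite{bb} and \cite{dumnicki2}, with an extra collection of lower-multiplicity fat points.

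When the integer constraints force the rigid specialization to overshoot or undershoot by only a few conditions, I would invoke the Horace differential lemma: instead of placing a whole fat point on $H$, one specializes only a tangent direction, producing finer trace/residual pairs whose dimensions can be matched exactly. Iterating, the inductive step passes from degree $d$ to degree $d-1$ across all numerical profiles $(d,w,x,y,z)$ with $d\geq 11$, except for a finite, explicitly computable list of exceptional tuples (roughly, the initial degrees $d=11,12$ and a few boundary profiles where no balanced specialization exists).

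The main obstacle is precisely this residual list of exceptional tuples. For each such tuple I would exhibit an explicit specialization of $Y$ and verify the expected rank of the associated evaluation (interpolation) matrix by symbolic linear algebra over $\QQ$; by semicontinuity, the conclusion then passes to the generic $Y$. This is the computer-assisted part of the argument, and its tractability is guaranteed by the fact that the Horace-differential reduction leaves only finitely many profiles to check. Ensuring that the differential specializations, the combinatorial choice of how many $5$-, $4$-, $3$- and $2$-points to place on $H$, and the dictionary with the multiplicity-$4$ theorem of \cite{bb} and the computer check of \cite{dumnicki2} fit together uniformly for every $d\geq 11$ is the delicate bookkeeping that the proof must carry out.
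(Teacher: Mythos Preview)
Your proposal captures the broad flavor of the argument (Castelnuovo sequence, differential Horace, computer-aided base cases), but two of its central steps do not work as stated and miss the actual mechanisms the paper uses.

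First, the claim that a single specialization onto $H$ makes the residual ``a multiplicity-at-most-$4$ problem accessible through \cite{bb} and \cite{dumnicki2}'' is wrong. If all $5$-points are sent to $H$, their traces alone impose roughly $15w\approx\frac{3}{7}\binom{d+3}{3}$ conditions on $H$, far exceeding $\binom{d+2}{2}$ for large $d$, so the planar side cannot be balanced. If only some $5$-points go to $H$, the residual still contains $5$-points and you are back where you started. The paper instead \emph{iterates} the Horace step many times, driving the degree from $d$ all the way down to a fixed level (first $18$, then $11$). The residuals produced along the way are \emph{not} general unions of fat points: they contain virtual layered schemes supported on $H$ coming from the differential lemma, so your inductive hypothesis does not apply to them. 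Controlling these iterated residuals requires the bookkeeping of the paper's Claims~\ref{claim-a0}--\ref{claim-e} together with the arithmetic Lemma~\ref{c1}, which guarantees $\deg(\Res_H(X_t)\cap H)\le\binom{t+1}{2}$ at every stage; nothing in your outline provides a substitute for this.

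Second, you drastically underestimate the computer-assisted portion. The iterated Horace argument only closes for $d\ge 53$ (and even that uses the $d=11$ computation as a base case, not the quartuple theorem); the paper explicitly notes that pushing the theoretical bound low enough to meet a naive computer check is infeasible. For $11\le d\le 52$ one must verify thousands of profiles, and this is tractable only because of Dumnicki's collapsing theorem (Theorem~\ref{teo-dumnicki}): one trades $22$ worth of $4$- and $3$-points for a single $10$-point, or $13$ quintuple points for a single $13$-point, etc., together with Proposition~\ref{pochi5} which disposes of the ``few $5$-points'' regime by a separate Horace argument. None of this appears in your plan, and without it the finite check you allude to is not finite in any practical sense.
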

%
The more natural way to prove our result would be to adopt a usual
two-parts proof: we might prove the theorem for $d \ge 66$ with the
same theoretical approach as in  \cite{bb} and then we might
prove the remaining finite cases with the computer.
We do not follow this consolidated path, because the
computer calculations at level $d\geq 60$ are infeasible
with nowadays means.
Instead, the proof of our result is an innovative combination of 
computer computation and theoretical argument, as in the following
logical outline:
\begin{itemize}
\item[a)] First, we prove  Theorem \ref{i1} for degrees $d=11$
          using our servers (Th. \ref{th11-21}).
\item[b)] Second, we improve the argument of \cite{bb} and so
      we are able to prove Theorem \ref{i1} for degrees $d\ge 53$, 
      with a theoretical proof depending on {\em both}
      known results (Remark \ref{y}) in the case of fat points of $\PP^2$ 
        {\em{and}} on a) ($d=11$). \\ This is presented in Section \ref{Sc}.
\item[c)] Then, we perform several computer calculations (Lemma \ref{punti-new}).
\item[d)] Then, we give a theoretical proof that restricts the
          required computations for the remaining cases ($11\leq d \leq 52$)
          to some feasible jobs. 
          This proof depends on the previous computational results. 
          The main point here is that 
          an iterated use of some results by Dumnicki 
          (\cite{dumnicki,dumnicki2}) allows us to greatly reduce the number 
          of cases 
          to be considered, by adding points of higher multiplicity.
          In particular we make use of points of multiplicity $10$ and $13$.
          Another tool we use is a result concerning low degrees and 
          few quintuple points 
          (see Proposition \ref{pochi5}). This result is proved by 
          a modification 
          of the general proof contained in Section \ref{Sc} and indeed allows 
          us to exclude many cases 
          from the explicit checking by computer. 
          All this is reported in Section \ref{intermedia}.
\item[e)] Finally, we perform direct computer checks for the surviving cases, as detailed in Section~\ref{computer}.
          Our computer calculations are deterministic and produce several digital certificates,
         that allow any other researcher to verify our results precisely. They rely on the efficient
         software package MAGMA (\cite{MAGMA}), whose linear algebra 
         over finite fields
         outperforms any other software that we tried.\\
         All our programmes and their digital certificates 
         are publicly accessible at
         \begin{verbatim} 
            http://www.science.unitn.it/~sala/fat_points/
         \end{verbatim}
\end{itemize}
In the remainder of the paper we provide two sections, as follows.

In Section \ref{d10} we classify all the exceptions arising in degree $9$ 
and $10$ (relying again on a computer-aided proof). 
 It turns out that, in these cases, the Laface-Ugaglia conjecture is true.

In Section \ref{fur} we collect several remarks on our results and their
consequences.


\section{Preliminaries}

In this section we fix our notation (which is the same as in \cite{bb} 
whenever possible),
prove several preliminary results and summarize our computational
results.

Let $\PP^n$ be the projective space on a field $\KK$, with
$\textrm{char}(\KK) = 0$ and $n\in \NN$.
Note that we do {\em not} assume that $\KK$ is algebraically closed.
However, some of the references which we will use assume 
that the base field is algebraically closed. In the next lemma
we explain why we are allowed to use these results.
\begin{lemma}
\label{www}
Let $\overline{\mathbb {K}}$ denote the algebraic closure of $\mathbb {K}$. 
Fix non-negative integers $n, d, x, y, z, w, s$ such that $n \ge 1$.
Assume that a general disjoint union of $w$ quintuple points, $x$ quartuple points, $y$ triple points, 
$z$ double points and $s$ (simple) points in $\mathbb {P}^n(\overline{\mathbb {K}})$
has good postulation in degree $d$, i.e. either
\begin{itemize}
\item $h^0(\mathbb {P}^n(\overline{\mathbb {K}}),\mathcal {I}_Z(d))=0$ and $\binom{n+4}{4}w+\binom{n+3}{3}x+\binom{n+2}{2}y+ (n+1)z+s\ge \binom{n+d}{n}$
\item
or $h^1(\mathbb {P}^n(\overline{\mathbb {K}}),\mathcal {I}_Z(d))=0$ and $\binom{n+4}{4}w+\binom{n+3}{3}x+\binom{n+2}{2}y+ (n+1)z+s\le \binom{n+d}{n}$.
\end{itemize} 
Then there is a disjoint union $W$ of $w$ quintuple points, $x$ quartuple points, $y$ triple points, $z$ double points and $s$ points in $\mathbb {P}^n(\mathbb {K})$
with good postulation in degree $d$.
\end{lemma}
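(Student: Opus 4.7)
The plan is to view ``good postulation of $Z$ in degree $d$'' as a Zariski-open condition on the configuration of support points, and then to descend a good $\overline{\KK}$-configuration to a $\KK$-configuration by density of $\KK$-rational points, using that $\KK$ is infinite whenever $\ch(\KK)=0$. Set $N := w+x+y+z+s$ and fix the multiplicity sequence $(m_1,\dots,m_N) = (5^w,4^x,3^y,2^z,1^s)$. I would parametrize configurations by the open subscheme $V_0 \subset (\PP^n)^N$ of tuples $\mathbf{P}=(P_1,\dots,P_N)$ with pairwise distinct entries, and to each $\mathbf{P} \in V_0$ attach the fat-point scheme $Z_\mathbf{P} := \sum_i m_i P_i$, of constant length
\[
\ell \;=\; \tbinom{n+4}{4}w+\tbinom{n+3}{3}x+\tbinom{n+2}{2}y+(n+1)z+s.
\]

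The key step will be the openness claim. Working in affine charts around each support point and writing out Taylor coefficients, the restriction map
\[
E_\mathbf{P} \colon H^0(\PP^n,\mathcal{O}(d)) \longrightarrow H^0(Z_\mathbf{P}, \mathcal{O}_{Z_\mathbf{P}}(d))
\]
is represented, in standard monomial and jet bases, by a matrix whose entries are $\KK$-polynomial functions of the affine coordinates of the $P_i$'s (the pairing of the order-$\alpha$ jet at $P_i$ with a degree-$d$ monomial is an explicit Taylor coefficient). Consequently $\rk E_\mathbf{P}$ is lower semi-continuous in $\mathbf{P}$. Under either numerical alternative of the lemma, good postulation of $Z_\mathbf{P}$ amounts to $E_\mathbf{P}$ having maximal rank $\min(\binom{n+d}{n},\ell)$---injective if $\ell \le \binom{n+d}{n}$, surjective otherwise---so
\[
U \;:=\; \bigl\{ \mathbf{P} \in V_0 \;:\; Z_\mathbf{P} \text{ has good postulation in degree } d \bigr\}
\]
is a Zariski-open subscheme of $V_0$, and defined over $\KK$ since $V_0$, the evaluation matrix, and the rank condition all are.

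The hypothesis over $\overline{\KK}$ says precisely that $U(\overline{\KK}) \ne \varnothing$, so the $\KK$-scheme $U$ itself is non-empty. Since $\ch(\KK)=0$ the field $\KK$ is infinite, and therefore $\KK$-rational points are Zariski-dense in the rational $\KK$-variety $(\PP^n_\KK)^N$; in particular $U(\KK) \ne \varnothing$. Any $\mathbf{P} \in U(\KK)$ determines the required scheme $W$, obtained by attaching multiplicity $m_i$ to $P_i$.

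The main obstacle---although routine---is the openness step: one has to write the matrix of $E_\mathbf{P}$ in an atlas of affine charts (as the $P_i$ vary over $\PP^n$ different charts will be needed) and check that its entries are polynomials in those local coordinates, or equivalently observe that the $Z_\mathbf{P}$ form a flat family over $V_0$ so that $\dim H^0(Z_\mathbf{P},\mathcal{O}(d))$ is locally constant and $\dim\ker E_\mathbf{P}$ is upper semi-continuous. Everything else reduces to the standard openness-plus-density descent and requires no further input.
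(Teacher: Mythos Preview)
Your proof is correct and follows essentially the same approach as the paper: parametrize configurations of support points by an open subset of $(\PP^n)^N$, observe that good postulation is a Zariski-open condition (by semicontinuity of the rank of the evaluation/jet matrix), and then use that $\KK$-rational points are Zariski-dense because $\KK$ is infinite. The only cosmetic difference is that the paper first reduces to the case $\ell\ge\binom{n+d}{n}$ by adding simple points and then works with the single condition $h^0(\mathcal I_{Z}(d))=0$, whereas you treat both numerical alternatives simultaneously via the maximal-rank formulation of $E_{\mathbf P}$; this changes nothing of substance.
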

\begin{proof}
Increasing $s$, if necessary, we reduce to the case 
$\binom{n+4}{4}w+\binom{n+3}{3}x+\binom{n+2}{2}y+ (n+1)z+s\ge \binom{n+d}{n}$. Let $\nu=w+x+y+z+s$.
Let $E$ be the subset of $\mathbb {P}^n(\overline{\mathbb {K}})^{\nu}$ parametrizing all the $\nu$-ples of 
distinct points of $\mathbb {P}^n(\overline{\mathbb {K}})$. 
For any $A\in E$, let $Z_A\subset \mathbb {P}^n(\overline{\mathbb {K}})$
be the fat point subcheme of $\mathbb {P}^n(\overline{\mathbb {K}})$ in which the first $w$ 
(resp. $x$, resp. $y$, resp. $z$, resp. $s$) fat points share multiplicity $5$ 
(resp. $4$, resp. $3$, resp. $2$, resp. $1$)
and $(Z_A)_{red}$ is the set associated to $A$. 
By semicontinuity there is a non-empty open
subset  $U$ of $E$ such that for all $A\in U(\overline{\mathbb {K}}) $ we have 
$\hh^0(\mathbb {P}^n(\overline{\mathbb {K}}),\mathcal {I}_{Z_A}(d)) =0$.
Since $\mathbb {K}$ is infinite, $\mathbb {K}^{n\nu}$ is dense in $\overline{\mathbb {K}}^{n\nu}$. 
Hence $\mathbb {P}^n(\mathbb {K})^{\nu}$ is Zariski
dense in $\mathbb {P}^n(\overline{\mathbb {K}})^{\nu}$. Thus, there is $B\in U(\mathbb {K})$ such that 
the scheme $Z_B$ satisfies $\hh^0(\mathbb {P}^n({\mathbb {K}}),\mathcal {I}_{Z_B}(d))=0$ and 
it is defined over $\mathbb {K}$.
\end{proof}
\noindent
From now on, $\mathbb {K}$ is {\em any} field with $\textrm{char}(\KK) = 0$ and 
$\mathbb {P}^n=\mathbb {P}^n({\mathbb {K}})$.

For any smooth $n$-dimensional connected variety $A$,
any $P\in A$ and any integer $m>0$, an {\em $m$-fat point of $A$} (or just
$m$-point) $\{mP,A\}$ is defined to be the $(m-1)$-th
infinitesimal neighborhood of $P$ in $A$, i.e.\ the closed subscheme
of $A$ with $(\mathcal {I}_{P,A})^m$ as its ideal sheaf.
As a consequence, $\{mP,A\}_{red} = \{P\}$ and the length of $\{mP,A\}$ is 
$\len(\{mP,A\})= \binom{n+m-1}{n}$.
To ease our notation, we will write $mP$ instead of $\{mP,A\}$ when the space $A$ is clear
from the context, and mostly we will have $A=\PP^n$ for $n=2,3$.

We call {\em general fat point scheme of $A$}
(or {\em general union} for short) any union $Y=m_1P_1+\ldots+m_k P_k$, with
$m_1\ge\ldots\ge m_k\ge1$, and $P_1,\ldots,P_k$ general points of
$\PP^n$. We denote $\deg(Y)=\sum \len(m_i P_i)$.\\
Given a positive integer $d$, we will say that a zero-dimensional
scheme $Y$ of $\mathbb {P}^n$ has {\em good postulation in degree 
$d$} if the following conditions hold:
\begin{enumerate}
\item[(a)]
if $\deg(Y) \le \binom{n+d}{n}$, then
$\hh^1(\mathbb {P}^n,\mathcal{I}_{Y}(d))=0,$
\item[(b)] if $\deg(Y) \ge\binom{n+d}{n}$, then
$\hh^0(\mathbb {P}^n,\mathcal {I}_{Y}(d))=0.$
\end{enumerate}

We will also use  the notation $\mathcal{L}_n(d;m_1,\ldots,m_k)$ for the linear system of 
hypersurfaces of degree $d$ in $\PP^n$ passing through a general union $Y=m_1P_1+\ldots+m_k P_k$.
The {\em virtual dimension} of $L=\mathcal{L}_n(d;m_1,\ldots,m_k)$ is 
$$\vdim(L)=\binom{n+d}{n}-\deg(Y)-1$$
and the dimension of the linear system always satisfies $\dim(L)\ge\vdim(L)$.
We say that $L$ is {\em special} if $\dim(L)>\max\{\vdim(L),-1\}$.
It is easy to see that a linear system $L$ is special if and only if
the corresponding general union does not have good postulation in degree $d$.
For more details we refer to \cite{ciliberto}.

\begin{remark}
\label{proofofcorollary}
Let $d_o \geq 2$.
Assume that $Y$ is any general fat point scheme in $\PP^n$ such that
$\deg(Y)\geq \binom{n+d_0}{n}$.
If we know that $Y$ has good postulation in degree $d\ge d_0$, we can claim
that $Y$ has good postulation in any degree, as follows.

For $d\ge d_0$, there is nothing to prove.

Since, for any $d\ge 1$, there is an injective map
$$
  H^0(\mathbb {P}^n,\I_Y(d-1))\hookrightarrow H^0(\mathbb {P}^n,\I_Y(d))\,,
$$ 
then $\hh^0(\mathbb {P}^n,\I_Y(d))=0$ implies
$\hh^0(\mathbb {P}^n,\I_Y(d-1))=0$. But $h^0(\mathbb {P}^n,\I_Y(d_0))=0$ and
so $\hh^0(\mathbb {P}^n,\I_Y(d))=0$ for any $d<d_0$, which
proves that $Y$ has good postulation.\\
Similarly, if $\hh^0(H,\mathcal {I}_{Y\cap H}(d_0)) =0$, then $\hh^0(H,\mathcal {I}_{Y\cap H}(d_0-1))=0$.
\end{remark}

The following general lemma will be useful in the sequel.
\begin{lemma}\label{2ago}
Let $\Sigma$ be an integral projective variety on $\overline{\KK}$ and 
let $\mathcal{L}$ be a linear system (not necessarly complete) of divisors on $\Sigma$. 
Fix an integer $m \geq 1$ and a general point $P\in \Sigma$.
Let $\mathcal{L}(-mP)$ be the sublinear system of $\mathcal{L}$ formed by all divisors with a point of multiplicity at
least $m$ at $P$.  Then we have
$$\dim (\mathcal{L}(-mP)) \le \max\{ \dim (\mathcal{L})-m,-1 \},$$
and, for any $1\leq k\le m$,
$$\dim (\mathcal{L}(-mP)) \le \max\{ \dim (\mathcal{L}(-kP))-(m-k),-1 \}.$$
\end{lemma}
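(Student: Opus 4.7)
The plan is to reduce both inequalities to a statement about a linear system on a general smooth curve through $P$, and then appeal to the classical theory of vanishing sequences in characteristic zero.

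Since $P$ is a general (hence smooth) point of the integral projective variety $\Sigma$, I would choose a general smooth irreducible curve $C \subset \Sigma$ through $P$, obtained for instance as the intersection of $\Sigma$ (in a projective embedding) with $n-1$ general hyperplanes containing $P$; Bertini yields smoothness of $C$ at $P$. Let $V$ denote the underlying finite-dimensional vector space of $\mathcal{L}$ and $L$ its line bundle. For a sufficiently generic $C$, the restriction $\rho \colon V \to H^0(C, L|_C)$ is injective (no nonzero divisor of the finite-dimensional $\mathcal{L}$ contains a general curve $C$); moreover, for a generic choice of the tangent direction of $C$ at $P$---avoiding the projective tangent cones at $P$ of the divisors in $\mathcal{L}$ of every multiplicity stratum---the order of $s|_C$ at $P$ on $C$ coincides with $\operatorname{mult}_P(s)$ on $\Sigma$ for every $s \in V$. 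Consequently $\rho$ identifies each $\mathcal{L}(-kP) \subseteq \mathcal{L}$ with the analogous subsystem $\rho(\mathcal{L})(-kP) \subseteq \rho(\mathcal{L})$ on $C$, preserving dimensions for every $k \geq 0$.

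It then suffices to prove both inequalities for the restricted linear system $\rho(\mathcal{L})$ on the smooth projective curve $C$. This is a classical consequence of the theory of vanishing sequences in characteristic zero: for a linear system of dimension $d$ on a smooth projective curve, the vanishing sequence at a general point is the minimal sequence $(0, 1, 2, \ldots, d-1)$. Indeed, the Weierstrass locus, where this sequence is non-minimal, is the zero scheme of the Wronskian of a basis of the system---a nonzero section of a suitable line bundle because $\ch(\KK)=0$---hence a finite subset of $C$. Thus $\dim \rho(\mathcal{L})(-kP) = \max\{\dim \mathcal{L} - k,\, -1\}$ for every $k \geq 0$, and both inequalities of the lemma follow immediately by subtraction.

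The hard part will be the careful management of the several simultaneous genericity conditions: $P$ must be general on $\Sigma$; $C$ must be general through $P$; the tangent direction of $C$ at $P$ must be generic (avoiding the finitely many tangent cones arising from the multiplicity stratification of $\mathcal{L}$ at $P$); and finally $P$ must lie outside the Weierstrass locus of $\rho(\mathcal{L})$ on $C$, a locus that itself depends on the choice of $C$. Each is an open condition in its respective parameter space, and their simultaneous satisfiability should follow from a dimension count in the universal incidence family of pairs $(C, P)$ with $P \in C \subset \Sigma$.
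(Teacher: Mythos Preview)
Your approach---restrict to a general curve through $P$ and invoke Wronskian/Weierstrass theory---is genuinely different from the paper's. The paper simply inducts on $m$, citing \cite[Proposition~2.3]{chiantini-ciliberto} for the single-step drop $\dim\mathcal L(-mP)\le\max\{\dim\mathcal L(-(m-1)P)-1,-1\}$, and then telescopes.

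However, your argument has a real gap. The claimed identification $\mathcal L(-kP)\cong\rho(\mathcal L)(-kP)$ is false in general. One always has $\mathrm{ord}_P(s|_C)\ge\mathrm{mult}_P(s)$, hence the inclusion $\rho(\mathcal L(-kP))\subseteq\rho(\mathcal L)(-kP)$; but the reverse would require $\mathrm{ord}_P(s|_C)=\mathrm{mult}_P(s)$ for \emph{every} $s\in V$, and no single tangent direction for $C$ achieves this. Whenever $\dim(V_k/V_{k+1})\ge 2$ (with $V_k=\{s:\mathrm{mult}_P(s)\ge k\}$), the leading-term map $V_k/V_{k+1}\hookrightarrow\mathrm{Sym}^k(T_P\Sigma)^\vee$ followed by evaluation at any fixed direction $\ell$ has a nontrivial kernel, and any $s$ in that kernel satisfies $\mathrm{ord}_P(s|_C)>k=\mathrm{mult}_P(s)$. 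Concretely, for $\Sigma=\mathbb P^2$, $\mathcal L=|\mathcal O(1)|$, and any curve $C$ through $P$, the line tangent to $C$ at $P$ has multiplicity $1$ at $P$ but order $\ge 2$ on $C$.

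With only the one-sided inclusion your Wronskian argument still gives the \emph{first} inequality, since
\[
\dim\mathcal L(-mP)\le\dim\rho(\mathcal L)(-mP)=\max\{\dim\mathcal L-m,-1\}.
\]
But the \emph{second} inequality does not follow: your ``subtraction'' step needs $\dim\mathcal L(-kP)\ge\dim\rho(\mathcal L)(-kP)$, which is exactly the missing direction. Nor can you apply the first inequality to the system $\mathcal L(-kP)$, because $P$ is now a base point and the vanishing sequence of $\rho(\mathcal L(-kP))$ at $P$ need not begin with $(k,k+1,\dots)$. Since the second inequality is actually used later (e.g.\ in the proof of Lemma~\ref{2agoprop} when a triple point is upgraded to a quintuple one), this gap matters.
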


\begin{proof}
The case $m=1$ is obvious. 
We assume by induction that
$$\dim (\mathcal{L}(-(m-1)P)) \le \max\{ \dim (\mathcal{L})-m+1,-1 \}.$$
By \cite[Proposition 2.3]{chiantini-ciliberto} 
it follows that 
$$
  \dim (\mathcal{L}(-mP)) \leq \max\{ \dim (\mathcal{L}(-(m-1)P))-1,-1\} \,,
$$ 
and so we get the desired inequality.
The proof of the second inequality is analogous.
\end{proof}

In the following lemma we show that in order to prove Theorem \ref{i1} 
for all quadruples $(w,x,y,z)$ of non-negative integers it is
sufficient to prove it only for a small set of quadruples $(w,x,y,z)$.

\begin{lemma}\label{2agoprop}
Fix an integer $d>0$. 
For any quadruple of non-negative integers $(w,x,y,z)$, let $Y(w,x,y,z)\subset \mathbb{P}^3$ 
denote a general union of $w$ 5-points, $x$ 4-points, $y$ 3-points and $z$ 
2-points. 
If $Y(w,x,y,z)$ has good postulation in degree $d$ for any quadruple $(w,x,y,z)$ such that
$$\binom{d+3}{3}-3 \le 35w+20x+10y+4z \le \binom{d+3}{3}+\Delta$$
where 
\begin{equation}\label{deltaaa}
\Delta=\left\{\begin{array}{ll}
13&\mbox{if $w>0$ and $x=y=z=0$,} \\
8&\mbox{if $x>0$ and $y=z=0$,}\\
4&\mbox{if $y>0$ and $z=0$,}\\
1&\mbox{if $z>0$}
\end{array}
\right.
\end{equation}
then any general quintuple fat point scheme has good postulation in degree $d$.
\end{lemma}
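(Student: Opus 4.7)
The plan is a reduction argument that splits into three cases according to whether $\deg(Y)$ lies below, inside, or above the hypothesis range $[T-3,\,T+\Delta]$, where $T:=\binom{d+3}{3}$. The middle case is precisely the hypothesis; the left case is handled by enlarging $Y$ (adding doubles), and the right case by passing to a subscheme together with Lemma~\ref{2ago}.

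If $\deg(Y)<T-3$, I would set $k:=\lceil(T-3-\deg(Y))/4\rceil$ and take $Y':=Y(w,x,y,z+k)\supset Y$. Since a double adds $4$ to the degree and the window $[T-3,T]$ has width~$4$, one has $\deg(Y')\in[T-3,T]$, and $(w,x,y,z+k)$ lies in the hypothesis range with $\Delta=1$. The hypothesis then gives $h^1(\mathcal{I}_{Y'}(d))=0$ (since $\deg(Y')\le T$). The short exact sequence
\[
0\to\mathcal{I}_{Y'}\to\mathcal{I}_Y\to\mathcal{I}_Y/\mathcal{I}_{Y'}\to 0
\]
has zero-dimensional quotient, so $H^1(\mathcal{I}_Y(d))$ is a quotient of $H^1(\mathcal{I}_{Y'}(d))=0$, whence $h^1(\mathcal{I}_Y(d))=0$ and $Y$ has good postulation.

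If $\deg(Y)>T+\Delta$, I would find a subscheme $Y^-\subset Y$ in the hypothesis range by reducing multiplicities and/or deleting fat points of $Y$, keeping $Y^-$ a general union of $5$-,\,$4$-,\,$3$-,\,$2$-points. Writing $Y$ as $Y^-$ together with additional fat points of multiplicities $m_1,\ldots,m_r$ at positions general with respect to $Y^-$ (after a standard specialization), iterated application of Lemma~\ref{2ago} yields
\[
\dim\mathcal{L}_3(d;Y)\le\max\bigl(\dim\mathcal{L}_3(d;Y^-)-\textstyle\sum_i m_i,\,-1\bigr).
\]
Since $\deg(Y^-)\ge T-3$ forces $h^0(\mathcal{I}_{Y^-}(d))\le 3$ by the hypothesis (when $\deg(Y^-)\le T$), it suffices to arrange $\sum_i m_i\ge h^0(\mathcal{I}_{Y^-}(d))$. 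The values $\Delta$ in \eqref{deltaaa} are chosen exactly so that such a $Y^-$ is always available: letting $\mu$ be the smallest multiplicity occurring in $Y$, we can either reduce a single $\mu$-point to a $(\mu-1)$-point (degree step $\binom{\mu+1}{2}$, extra multiplicity~$1$, for $\mu\ge 3$) or remove a whole fat point of multiplicity at least $\mu$ (degree step $\binom{\mu+2}{3}$, extra multiplicity $\mu$), and these moves may be iterated.

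The main obstacle is the case analysis in the last step. For each $\mu\in\{2,3,4,5\}$ one must verify that, possibly depleting $z$, $y$, or $x$ along the way and thus shifting the quadruple type of $Y^-$ between the four cases of \eqref{deltaaa}, appropriate combinations of reductions and removals always produce some $Y^-$ in the hypothesis range with $\sum_i m_i\ge h^0(\mathcal{I}_{Y^-}(d))$. The specific values $\Delta=1,4,8,13$ are precisely the tightest excesses for which this bookkeeping can be carried out uniformly.
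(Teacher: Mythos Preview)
Your outline matches the paper's proof exactly: enlarge by double points when $\deg(Y)<T-3$, and when $\deg(Y)>T+\Delta$ pass to a subscheme $Y^-$ in the hypothesis range (obtained by removing some fat points and, when necessary, lowering the multiplicity at one support point) and then apply Lemma~\ref{2ago} to kill the residual $h^0$. The paper carries out precisely the four-case bookkeeping you flag as the ``main obstacle'': for each smallest multiplicity $\mu\in\{2,3,4,5\}$ it writes down explicitly which $Y^-$ to take in each residue subcase (e.g.\ for $\mu=5$ it uses $Y(w-w',0,0,0)$, $Y(w-w'+1,0,0,0)$, $Y(w-w',0,1,0)$, or $Y(w-w',1,0,0)$ depending on where $35(w-w')$ falls in $[T-21,T+13]$), checking each time that $h^0(\mathcal I_{Y^-}(d))$ is at most the extra multiplicity supplied; your sketch is correct but this verification still needs to be written out.

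One phrasing to tighten: when you reduce a $5$-point to a $3$- or $4$-point, $Y$ is not literally ``$Y^-$ together with additional fat points''; rather $Y^-\subset Y$ and the second inequality of Lemma~\ref{2ago} (with $k>0$) is what gives the drop by $m-k$. Your displayed inequality with $\sum_i m_i$ should therefore be read with $m_i$ equal to the \emph{increment} in multiplicity at the $i$-th support point, not the full multiplicity of an added fat point.
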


\begin{proof}
If a quadruple $(w,x,y,z)$ is such that
$35w+20x+10y+4z \le\binom{d+3}{3}-4$, then we want to prove that $\hh^1(\mathbb {P}^n,\mathcal {I}_{Y}(d))=0,$
where $Y=Y(w,x,y,z)$. 
Let $z'>0$ be the integer such that
$\binom{d+3}{3}-3\le35w+20x+10y+4z+4z' \le\binom{d+3}{3}$.
By hypothesis we know that $Y'=Y(w,x,y,z+z')$ has good postulation,
 that is, 
$\hh^1(\mathbb {P}^3,\mathcal {I}_{Y'}(d))=0$.
Since $Y\subset Y'$, then it is easy to see that 
$\hh^1(\mathbb {P}^3,\mathcal {I}_{Y}(d))\le
\hh^1(\mathbb {P}^3,\mathcal {I}_{Y'}(d))=0$, and so $Y$ has good postulation.

Now assume that 
$w>0$, $x=y=z=0$, and $35w \ge\binom{d+3}{3}+14$.
Let $Y$ be the corresponding general union of $w$ quintuple points.
This time we want to prove that
$\hh^0(\mathbb {P}^3,\mathcal{I}_{Y}(d))=0.$
Let $w'>0$ such that
$\binom{d+3}{3}-21\le35(w-w') \le\binom{d+3}{3}+13$.
Now we consider the following subcases:
\begin{itemize}
\item
if $35(w-w') \ge\binom{d+3}{3}$,
then we take the union  $Y'=Y(w-w',0,0,0)$ of $w-w'$ quintuple general points. Since we can assume $Y'\subset Y$, we immediately have
$\hh^0(\mathbb{P}^3,\mathcal {I}_{Y}(d))\leq
\hh^0(\mathbb {P}^3,\mathcal {I}_{Y'}(d))=0$, and so $Y$ has good
postulation.

\item
If $\binom{d+3}{3}-5\le 35(w-w')  \le \binom{d+3}{3}-1$,
we take the union  $Y'=Y(w-w',0,0,0)$ of $w-w'$ quintuple general points.
Since $Y$ contains at least one further quintuple point,
we can consider $Y''=Y(w-w'+1,0,0,0)$ and we can assume that
$Y'\subset Y''\subseteq Y$.
Note that $Y'$ has good postulation by hypotesis, and $\hh^0(\mathbb{P}^3,\mathcal {I}_{Y'}(d))\leq 5$. Hence by Lemma \ref{2ago} we have
$\hh^0(\mathbb{P}^3,\mathcal {I}_{Y''}(d))
\le \max\{\hh^0(\mathbb{P}^3,\mathcal {I}_{Y'}(d))-5,0\}=0$. Then we have that $Y''$ has good postulation, and consequently $Y$ has good postulation.

\item
If $\binom{d+3}{3}-12 \le 35(w-w')  \le \binom{d+3}{3}-6$, then we take
$Y'=Y(w-w',0,1,0)$, i.e.\ a general union of $w-w'$ quintuple points and one triple point. 
Now $\binom{d+3}{3}-2 \le \deg(Y')  \le \binom{d+3}{3}+4$ and by hypothesis $Y'$ has good postulation.
Since we can assume $Y'\subset Y$, by Lemma \ref{2ago} we have
$\hh^0(\mathbb{P}^3,\mathcal {I}_{Y}(d))
\le \max\{\hh^0(\mathbb{P}^3,\mathcal {I}_{Y'}(d))-2,0\}=0$, and so $Y$ has good postulation.
\item
If $\binom{d+3}{3}-21 \le 35(w-w')  \le \binom{d+3}{3}-11$, 
then we take $Y'=Y(w-w',1,0,0)$.
Now $\binom{d+3}{3}-2 \le \deg(Y')  \le \binom{d+3}{3}+9$ and by hypothesis $Y'$ has good postulation.
Since we can assume $Y'\subset Y$, by Lemma \ref{2ago} we have
$\hh^0(\mathbb{P}^3,\mathcal {I}_{Y}(d))
\le \max\{\hh^0(\mathbb{P}^3,\mathcal {I}_{Y'}(d))-1,0\}=0$, and so $Y$ has good postulation.
\end{itemize}

Assume now $x>0$, $y=z=0$ and $35w+20x \ge\binom{d+3}{3}+9$.
Let $Y=Y(w,x,0,0)$ be the corresponding general union and we want to prove that
$\hh^0(\mathbb {P}^3,\mathcal{I}_{Y}(d))=0.$

If $35w \ge\binom{d+3}{3}$, 
then $Y'=Y(w,0,0,0)$ has good postulation by the previous step and clearly it follows 
that $Y$ has good postulation.
Otherwise there exists $0<x'< x$ such that
$\binom{d+3}{3}-11 \le35w-20(x-x') \le\binom{d+3}{3}+8$.

Now we consider the following subcases:
\begin{itemize}
\item
If $35w-20(x-x') \ge \binom{d+3}{3}-4$, then
we take the union  $Y'=Y(w,x-x',0,0)$.
Since $Y$ contains at least one further quartuple point, by Lemma \ref{2ago} we have
$\hh^0(\mathbb{P}^3,\mathcal {I}_{Y}(d))
\le \max\{\hh^0(\mathbb{P}^3,\mathcal {I}_{Y'}(d))-4,0\}=0$, and so $Y$ has good postulation.
\item
If $\binom{d+3}{3}-6\le 35w+20(x-x')= \binom{d+3}{3}-5$, 
then we take $Y'=Y(w,x-x',0,1)$ and we have
$\binom{d+3}{3}-2 \le\deg(Y')  \le \binom{d+3}{3}-1$ and by hypothesis $Y'$ has good postulation.
Since we can assume $Y'\subset Y$, by Lemma \ref{2ago} we have
$\hh^0(\mathbb{P}^3,\mathcal {I}_{Y}(d))
\le \max\{\hh^0(\mathbb{P}^3,\mathcal {I}_{Y'}(d))-2,0\}=0$, and so $Y$ has good postulation.
\item
If $\binom{d+3}{3}-11 \le 35w+20(x-x')  \le \binom{d+3}{3}-7$, 
then we take $Y'=Y(w,x-x',1,0)$ and we have
$\binom{d+3}{3}-1 \le \deg(Y')  \le \binom{d+3}{3}+3$ and by hypothesis $Y'$ has good postulation.
Since we can assume $Y'\subset Y$, by Lemma \ref{2ago} we have
$\hh^0(\mathbb{P}^3,\mathcal {I}_{Y}(d))
\le \max\{\hh^0(\mathbb{P}^3,\mathcal {I}_{Y'}(d))-1,0\}=0$, and so $Y$ has good postulation.
\end{itemize}

Now assume $y>0$, $z=0$ and $35w+20x+10y \ge\binom{d+3}{3}+6$.
Let $Y=Y(w,x,y,0)$ be the corresponding general union and we want to prove that
$\hh^0(\mathbb {P}^3,\mathcal{I}_{Y}(d))=0.$

If $35w+20x \ge\binom{d+3}{3}$, 
then $Y'=Y(w,x,0,0)$ has good postulation by the previous steps and clearly it follows 
that $Y$ has good postulation.
Otherwise there exists $0<y'< y$ such that
$\binom{d+3}{3}-4 \le 35w-20x+10(y-y') \le\binom{d+3}{3}+5$.

Now we consider the following subcases:
\begin{itemize}
\item
If $35w-20x+10(y-y') \ge \binom{d+3}{3}-3$, then
we take the union  $Y'=Y(w,x,y-y',0)$.
Since $Y$ contains at least one further triple point, by Lemma \ref{2ago} we have
$\hh^0(\mathbb{P}^3,\mathcal {I}_{Y}(d))
\le \max\{\hh^0(\mathbb{P}^3,\mathcal {I}_{Y'}(d))-3,0\}=0$, and so $Y$ has good postulation.
\item
If $35w-20x+10(y-y') = \binom{d+3}{3}-4$, 
then we take $Y'=Y(w,x,y-y',1)$ and we have that
$\deg(Y')  = \binom{d+3}{3}$ and by hypothesis $Y'$ has good postulation.
It immediately follows that
$\hh^0(\mathbb{P}^3,\mathcal {I}_{Y}(d))
\le \hh^0(\mathbb{P}^3,\mathcal {I}_{Y'}(d))=0$, and so $Y$ has good postulation.
\end{itemize}

Finally assume that $z>0$ and $35w+20x+10y+4z \ge\binom{d+3}{3}+2$.
Let $Y=Y(w,x,y,z)$ be the corresponding general union and we want to prove that
$\hh^0(\mathbb {P}^3,\mathcal{I}_{Y}(d))=0.$

If $35w+20x+10y \ge\binom{d+3}{3}$, 
then $Y'=Y(w,x,y,0)$ has good postulation by the previous steps and clearly it follows 
that $Y$ has good postulation.
Otherwise there exists $0<z'< z$ such that
$\binom{d+3}{3}-2 \le 35w-20x+10y+4(z-z') \le\binom{d+3}{3}+1$.
Now we take the union  $Y'=Y(w,x,y,z-z')$.
Since $Y$ contains at least one further double point, by Lemma \ref{2ago} we have
$\hh^0(\mathbb{P}^3,\mathcal {I}_{Y}(d))
\le \max\{\hh^0(\mathbb{P}^3,\mathcal {I}_{Y'}(d))-2,0\}=0$, and so $Y$ has good postulation.
\end{proof}

\begin{remark}
Lemma \ref{2ago} and  Lemma \ref{2agoprop} heavily use 
$\mbox{char}(\mathbb {K})=0$, but they will be useful also
in Section \ref{computer}.
\end{remark}

Given a general fat point scheme $Y$ of $\mathbb {P}^n$ and a 
hyperplane $H\subset\mathbb {P}^n$, we will call {\em trace} of $Y$ the 
subscheme $(Y\cap H)\subset H$ and {\em residual} of $Y$ the scheme 
$\Res_H(Y)\subset\mathbb {P}^n$ with ideal sheaf 
$\I_Y:\mathcal{O}_{\mathbb {P}^n}(-H)$.
Notice that if $Y$ is an $m$-point supported on $H$, then its 
trace $Y\cap H$ is an $m$-point of $H$ and its residual $\Res_H(Y)$
is an $(m-1)$-point of $\mathbb {P}^n$. We will often use the following 
form of the so-called {\em  Horace
lemma}.

\begin{lemma}\label{castelnuovo}
Let $H\subset \mathbb {P}^n$ be a 
hyperplane and $X\subset \mathbb {P}^n$ a closed
subscheme. Then$$ \hh^0(\mathbb {P}^n,\I_X(d))\le\hh^0(\mathbb 
{P}^n,\I_{\Res_H(X)}(d-1))+\hh^0(H,\I_{X\cap H}(d))$$$$
\hh^1(\mathbb {P}^n,\I_X(d))\le\hh^1(\mathbb 
{P}^n,\I_{\Res_H(X)}(d-1))+\hh^1(H,\I_{X\cap 
H}(d))$$
\end{lemma}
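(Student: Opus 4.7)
The plan is to derive both inequalities simultaneously from the long exact cohomology sequence attached to the standard residuation short exact sequence of ideal sheaves on $\PP^n$. Specifically, I would start from
$$0 \to \I_{\Res_H(X)}(-H) \to \I_X \to \I_{X \cap H, H} \to 0,$$
in which the left map is multiplication by a local defining equation of $H$ and the right map is the natural restriction. The exactness is essentially the content of the definition of the residual: by construction $\I_{\Res_H(X)} = \I_X : \mathcal{O}_{\PP^n}(-H)$, so tensoring with $\mathcal{O}_{\PP^n}(-H)$ produces the indicated subsheaf of $\I_X$; the cokernel is visibly supported on $H$, and a local computation identifies it with the ideal sheaf $\I_{X \cap H, H}$ of the trace on $H$.

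Next, I would twist by $\mathcal{O}_{\PP^n}(d)$ to obtain the exact sequence
$$0 \to \I_{\Res_H(X)}(d-1) \to \I_X(d) \to \I_{X \cap H, H}(d) \to 0,$$
and pass to the associated long exact sequence in cohomology:
$$0 \to \HH^0(\PP^n, \I_{\Res_H(X)}(d-1)) \to \HH^0(\PP^n, \I_X(d)) \to \HH^0(H, \I_{X \cap H}(d)) \to \HH^1(\PP^n, \I_{\Res_H(X)}(d-1)) \to \HH^1(\PP^n, \I_X(d)) \to \HH^1(H, \I_{X \cap H}(d)).$$

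From exactness at the $\HH^0(\PP^n, \I_X(d))$ term, this group is squeezed between the preceding subspace $\HH^0(\PP^n, \I_{\Res_H(X)}(d-1))$ and a subspace of $\HH^0(H, \I_{X \cap H}(d))$, which gives the first stated inequality just by comparing dimensions. From exactness at $\HH^1(\PP^n, \I_X(d))$, the same group is squeezed between (a quotient of) $\HH^1(\PP^n, \I_{\Res_H(X)}(d-1))$ and (a subspace of) $\HH^1(H, \I_{X \cap H}(d))$, yielding the second inequality. There is no real obstacle: the only technical point is verifying the exactness of the residuation sequence, and this is entirely standard.
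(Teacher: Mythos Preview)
Your proposal is correct and follows exactly the same approach as the paper: the paper simply invokes the Castelnuovo exact sequence $0\to \I_{\Res_H(X)}(d-1)\to \I_X(d)\to \I_{X\cap H}(d)\to 0$ and states that the inequalities follow from it, referring to \cite{BO} for details. Your argument spells out precisely those details via the associated long exact cohomology sequence.
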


\begin{proof}
The
statement is a straightforward consequence of the well-known
{\em 
Ca\-stel\-nuo\-vo exact sequence} $$0\to
\I_{\Res_H(X)}(d-1)\to \I_X(d)\to \I_{X\cap H}(d)\to 0.$$
For more details see e.g. \cite[Section 4]{BO}.
\end{proof}

The basic
tool we will need is the so-called {\em Horace differential lemma}. 
This technique allows us to take a {\em differential
trace} and a {\em differential residual}, instead of the classical 
ones. For an explanation of the geometric intuition of the
Horace differential lemma see \cite[Section 2.1]{AH}. Here we give 
only an idea of how the lemma works. Let $Y$ be an $m$-point
of $\mathbb {P}^n$ supported on a hyperplane$H\subset\mathbb {P}^n$. 
Following the language of Alexander and Hirschowitz, we can
describe $Y$ as formed by infinitesimally piling up some subschemes 
of $H$, called {\em layers}. For example the layers of a
$3$-point $\{3P,\mathbb {P}^n\}$ are $\{3P,H\}$,$\{2P,H\}$, and 
$\{P,H\}$. Then the differential trace can be any of these
layers and the differential residual is a {\em virtual} 
zero-dimensional scheme formed by the remaining layers. In this paper
we will apply several times the following result which is a 
particular case of the Horace differential lemma (see
\cite[Lemma 2.3]{AH}). 

\begin{lemma}[Alexander-Hirschowitz] \label{alehir} Fix an integer 
$m\ge 2$ and assume that
$\textrm{char}(\KK)=0$ or $\textrm{char}(\KK)>m$. Let 
$X$ be an $m$-point of $\mathbb {P}^n$ supported on $P$ and
$H\subset\mathbb {P}^n$ a hyperplane.Then for $i=0,1$ we have$$ 
\hh^i(\mathbb {P}^n,\I_X(d))\le \hh^i(\mathbb
{P}^n,\I_{R}(d-1))+\hh^i(H,\I_{T}(d))$$where the {\em differential 
residual } $R$ and the {\em differential trace} $T$ are
virtual schemes of the following type:

\begin{tabular}{|c|c|c|c|}
\hline
$m$& $T$&$R$&
\\\hline
$2$& $\{P,H\}$&$\{2P,H\}$&$(1,3)$
\\\hline
$3$& $\{P,H\}$&$(\{3P,H\},  \{2P,H\})$&$(1,6,3)$
\\\hline
$3$&$\{2P,H\}$ &$ (\{3P,H\},  \{P,H\})$ & $(3,6,1)$\\\hline
$4$& $\{P,H\}$&$( \{4P,H\}, \{3P,H\}, \{2P,H\})$&$(1,10,6,3)$
\\\hline$4$& $\{2P,H\}$&$(\{4P,H\}, \{3P,H\}, \{P,H\})$&$(3,10,6,1)$
\\\hline$4$& $\{3P,H\}$&$( \{4P,H\},  \{2P,H\}, \{P,H\})$&$(6,10,3,1)$
\\\hline$5$& $\{P,H\} $&$ (\{5P,H\}, \{4P,H\}, \{3P,H\}, 
\{2P,H\})$&$(1,15,10,6,3)$
\\\hline$5$& $\{2P,H\}$&$  (\{5P,H\}, \{4P,H\}, \{3P,H\}, 
\{P,H\})$&$(3,15,10,6,1)$
\\\hline$5$& $\{3P,H\}$&$(\{5P,H\}, \{4P,H\}, \{2P,H\}, \{P,H\})$&$(6,15,10,3,1)$
\\\hline$5$& $\{4P,H\}$&$  (\{5P,H\}, \{3P,H\}, \{2P,H\}, 
\{P,H\})$&$(10,15,6,3,1)$
\\\hline
\end{tabular}
\end{lemma}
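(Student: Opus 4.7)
The plan is to bound $h^i(\mathbb{P}^n,\mathcal{I}_X(d))$ by semicontinuity applied to a suitable flat degeneration of $X$ onto the hyperplane $H$, and then apply the ordinary Castelnuovo sequence (Lemma~\ref{castelnuovo}) to the special fibre. This is the standard Horace differential setup of Alexander--Hirschowitz.

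First I would set up local coordinates $(x_1,\dots,x_n)$ at $P$ with $H=\{x_n=0\}$, so that $\mathcal{I}_X$ is locally $\mathfrak{m}^m$. The key observation is the layer decomposition
\[
  \mathfrak{m}^m \;=\; \sum_{k=0}^{m} x_n^{\,k}\,\mathfrak{m}_H^{\,m-k},
\]
where $\mathfrak{m}_H=(x_1,\dots,x_{n-1})$; each summand is the ideal of a fat point $\{(m-k)P,H\}$ sitting as a layer of $X$. I would then construct a one-parameter flat family $\mathcal{X}\to\mathbb{A}^1$ whose generic fibre is $X$ and whose special fibre $X_0$ is concentrated on $H$, arranged so that $X_0\cap H$ equals the prescribed layer $T=\{jP,H\}$ and $\mathrm{Res}_H(X_0)$ equals the ``virtual'' union $R$ of the remaining layers, matching one of the rows of the table. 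The characteristic assumption $\mathrm{char}(\KK)=0$ (or $>m$) is precisely what keeps the divided-power structure on $\mathfrak{m}^m$ non-degenerate, so that the family is flat and no layers collapse unexpectedly.

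Second, by upper semicontinuity of $h^i$ in a flat family one has $h^i(\mathcal{I}_X(d))\le h^i(\mathcal{I}_{X_0}(d))$ for $i=0,1$, and then Lemma~\ref{castelnuovo} applied to $X_0$ with respect to $H$ gives
\[
  h^i(\mathbb{P}^n,\mathcal{I}_{X_0}(d))\;\le\; h^i(\mathbb{P}^n,\mathcal{I}_R(d-1)) + h^i(H,\mathcal{I}_T(d)),
\]
which is the desired inequality. The length column in the table (e.g.\ $(1,15,10,6,3)$) is the routine book-keeping check that $\len X=\len T+\len R$.

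The main obstacle I expect is the rigorous construction of the one-parameter degeneration realizing any chosen row of the table. One cannot simply translate $P$ off $H$ and back, because the naive limit produces $X\cap H = mP$ rather than a single layer; one needs the differential trick of twisting the family by a suitable vector field so that only the designated layer remains on $H$ in the limit, with the rest becoming the virtual scheme $R$ supported infinitesimally near $H$. Controlling the cohomology of this infinitesimal $R$ in terms of the ``honest'' disjoint union of fat-point layers of $H$ listed in the table is the delicate point, and is exactly where the Alexander--Hirschowitz machinery of \cite[Sec.~2.1]{AH} does the work; I would invoke their Lemma~2.3 verbatim at this step rather than redo the local analysis.
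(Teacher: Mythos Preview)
Your proposal is correct and aligns with the paper's treatment: the paper does not prove this lemma at all but simply records it as a particular case of \cite[Lemma~2.3]{AH}, which is exactly the reference you invoke at the end. Your sketch of the underlying mechanism (flat degeneration, semicontinuity, then the Castelnuovo exact sequence on the special fibre) is the standard explanation of how the Horace differential lemma works, and the paper's surrounding discussion of ``layers'' and the table of sequences is consistent with what you describe; since both you and the paper ultimately defer the technical core to Alexander--Hirschowitz, there is no substantive difference in approach.
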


In the previous lemma we described the 
possible differential residuals by writing the subsequent layers from 
which they are formed. These layers are obtained by intersecting with the 
hyperplane $H$ many times.
In particular the notation e.g.\ $R= 
(\{3P,H\},  \{2P,H\})$
means that $R\cap H= \{3P,H\}$ and 
$\mbox{Res}_H(R)\cap H= \{2P,H\}$, and, finally, 
$\mbox{Res}_H(\mbox{Res}_H(R))\cap H= \emptyset$, the latter equality 
being equivalent to 
$\mbox{Res}_H(\mbox{Res}_H(R))= \emptyset$, 
because $R_{red}\subset H$. Moreover, for each case in the statement 
we write in the last column the list of the lengths of the fat points 
of $H$ that we will obtain intersecting many times with $H$. 
Throughout the paper, when we will apply Lemma \ref{alehir}, we will 
specify which case we are considering by recalling this sequence of 
the lengths. For example, if we apply the first case of Lemma 
\ref{alehir}, we will say that we apply the lemma with respect to the 
sequence $(1,3)$.

The next two arithmeticals lemma will be used in the sequel.
\begin{lemma}\label{aritmetico}
Let $w,x,y,z$ be non negative integers such that 
$$ 35w+20x+10y+4z \le \binom{d+3}{3}+13.$$
Let $\alpha=\lfloor\frac{2x+y}{42}\rfloor$ and assume that $w\le\alpha-1$.
Then $35w\leq\frac1{12}\binom{d+3}3$.
\end{lemma}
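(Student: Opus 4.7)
The plan is a short arithmetic manipulation. The hypothesis $w \le \alpha - 1$ on the floor $\alpha = \lfloor (2x+y)/42\rfloor$ should be unfolded to a direct linear inequality between $w$ and $2x+y$, and then fed into the main degree bound, dropping the (nonnegative) contribution of $4z$.

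More precisely, I would first note that $w \le \alpha - 1$ is equivalent to $w+1 \le \alpha$, and since $\alpha \le (2x+y)/42$ by definition of the floor, this yields
\[
  42(w+1) \;\le\; 2x + y,
\]
hence after multiplying by $10$,
\[
  20x + 10y \;\ge\; 420w + 420.
\]

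Next, I would insert this into the standing inequality $35w + 20x + 10y + 4z \le \binom{d+3}{3} + 13$: discarding $4z \ge 0$ and substituting the lower bound for $20x+10y$ gives
\[
  35w + 420w + 420 \;\le\; 35w + 20x + 10y \;\le\; \binom{d+3}{3} + 13,
\]
so that $455w \le \binom{d+3}{3} - 407$. Since $420 \le 455$ and $-407 \le 0$, it follows that
\[
  420w \;\le\; \binom{d+3}{3},
\]
which is exactly $35w \le \tfrac{1}{12}\binom{d+3}{3}$.

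There is no real obstacle here; the only point that needs a word of care is the passage from the floor inequality $w+1 \le \lfloor (2x+y)/42\rfloor$ to $42(w+1) \le 2x+y$, which uses that both sides of $\alpha \le (2x+y)/42$ are real numbers even though $w$ and $\alpha$ are integers.
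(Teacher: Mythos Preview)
Your proof is correct and follows essentially the same route as the paper: both unfold the floor to get a linear bound $\alpha \le (2x+y)/42$, feed this into the main degree inequality after discarding nonnegative terms, and conclude $420w \le \binom{d+3}{3}$. The only cosmetic difference is that the paper drops the $35w$ term at the outset (obtaining $20x+10y \le \binom{d+3}{3}+13$ directly), whereas you retain it and arrive at the slightly sharper intermediate bound $455w \le \binom{d+3}{3}-407$ before relaxing; this is the same argument with marginally different bookkeeping.
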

\begin{proof}
By hypothesis we have
$20x+10y \le \binom{d+3}{3}+13$, from which we have
$$w\le\alpha-1\le\frac{20x+10y}{420}-1 \le\frac{1}{420} \binom{d+3}{3}+\frac{13}{420}-1\le\frac{1}{420} \binom{d+3}{3}.$$
\end{proof}

\begin{lemma}\label{c1}Fix 
non-negative integers $t, a,b,c,u,v, e,f,g,h$ such that $t \ge
18$, 
\begin{equation}\label{eqc1}15a +10b+6c +3u +v+10e+6f +3g +h\le 
\binom{t+2}{2}\end{equation} 
and $(e,f,g,h)$ is one of the
following quadruples: $(0,0,0,0)$,$(0,0,0,1)$, $(0,0,0,2)$, 
$(0,0,1,0)$, $(0,0,1,1)$, $(0,0,1,2)$, $(0,1,0,0)$,$(0,1,0,1)$,
$(0,1,0,2)$, $(0,1,1,0)$, $(1,0,0,0)$, $(1,0,0,1)$, $(1,0,0,2)$, 
$(1,0,1,0)$, $(1,0,1,1)$. Then the following inequality
holds:
\begin{align}\label{eqc2}  &10a+6b+3c+u+ 15e +15f + 15g +15h 
\le \binom{t+1}{2}.\end{align} 
If $e+f+g+h\le 2$, then the
statement holds for any $t\ge15$. If $e=f=g=h=0$, then the statement 
holds for any $t\ge 4$.\end{lemma}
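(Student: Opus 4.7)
The plan is to derive (\ref{eqc2}) from (\ref{eqc1}) by exploiting the elementary pointwise estimate
\[
10a + 6b + 3c + u \;\leq\; \tfrac{2}{3}\bigl(15a + 10b + 6c + 3u + v\bigr),
\]
which is immediate from the coefficient ratios $10/15,\ 6/10,\ 3/6,\ 1/3,\ 0/1$, each of them bounded by $2/3$ (with equality only in the first slot). Subtracting $\tfrac{2}{3}(10e+6f+3g+h)$ from both sides of (\ref{eqc1}) and then adding $15(e+f+g+h)$, the hypothesis (\ref{eqc1}) gets transformed into
\[
10a + 6b + 3c + u + 15(e+f+g+h) \;\leq\; \tfrac{2}{3}\binom{t+2}{2} + \tfrac{25}{3}e + 11f + 13g + \tfrac{43}{3}h.
\]

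Combining this with the identity
\[
\binom{t+1}{2} - \tfrac{2}{3}\binom{t+2}{2} \;=\; \tfrac{(t+1)(t-4)}{6},
\]
the target inequality (\ref{eqc2}) reduces, after clearing denominators, to the single arithmetic condition
\[
25e + 33f + 39g + 43h \;\leq\; \tfrac{(t+1)(t-4)}{2}.
\]
I would then inspect the $15$ admissible quadruples $(e,f,g,h)$. The left-hand side peaks at $125$, attained by $(0,0,1,2)$, so $t\geq 18$ suffices for the main claim (since $\tfrac{19\cdot 14}{2}=133\geq 125$). If $e+f+g+h\leq 2$ the maximum drops to $86$, attained by $(0,0,0,2)$, and $\tfrac{16\cdot 11}{2}=88$ handles $t\geq 15$. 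When $e=f=g=h=0$ the inequality becomes trivial as soon as $t\geq 4$.

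The only real obstacle is identifying the correct trade-off constant $\tfrac{2}{3}$, which is essentially forced by the ratio of coefficients between a quintuple and a quartuple fat point on $\mathbb{P}^2$; the checks then boil down to routine arithmetic. The three thresholds on $t$ listed in the statement are essentially tight for this approach, for instance at $t=15$ with $(e,f,g,h)=(0,0,0,2)$ the slack is only $88-86=2$, which explains why a uniform single threshold does not suffice and a case split on $e+f+g+h$ is imposed.
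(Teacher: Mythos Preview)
Your argument is correct and is, at its core, the same linear-algebra reduction as the paper's: both proofs bottom out in the inequality
\[
25e + 33f + 39g + 43h \;\le\; \frac{(t+1)(t-4)}{2},
\]
checked against the finite list of admissible $(e,f,g,h)$. The paper arrives there by contradiction, subtracting the assumed failure of (\ref{eqc2}) from (\ref{eqc1}) and then taking a further linear combination to eliminate $a$; you arrive there directly via the scaling $10a+6b+3c+u\le\frac{2}{3}(15a+10b+6c+3u+v)$. These are algebraically the same manipulation (the paper's combination is $3\cdot(\ref{eqc2})-2\cdot(\ref{eqc1})$, your scaling factor is $2/3$), so the approaches coincide, though your direct presentation with the explicit coefficient ratio is arguably more transparent than the paper's contradiction format.
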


\begin{proof} By
contradiction, let us assume that
\begin{equation}\label{prova}
10a+6b+3c+u+ 15e +15f + 15g +15h > \binom{t+1}{2},
\end{equation}
which, together with \eqref{eqc1}, implies 
\begin{equation}\label{altraprova}
5a +4b+3c +2u+v -5e-9f-12g-14h \le t+1.
\end{equation}
{}From \eqref{prova} and \eqref{altraprova} we get $$
\binom{t+1}{2}-2t-2 <-2b-3c-3u-2v+ 15(e+f+g+h)+ 2(5e+9f+12g+14h)
$$
that is
$$
\binom{t+1}{2}-2t-2 <-2b-3c-3u-2v+ 25e+33f+39g+43h<125,
$$
which implies
$t^2-3t-254<0$, which is false as soon as $t\ge18$.

If $e+f+g+h\le2$, the same steps give $t^2-3t-176<0$ which is false as soon as
$t\ge 15$.

If $e=f=g=h=0$, the same steps give $t^2-3t-4<0$ which is false as soon as
$t\ge 4$.
\end{proof}

\begin{remark}\label{pesante}
 Let $Y \subset \mathbb {P}^3$ be a 
zero-dimensional scheme and $H$ a hyperplane of $\mathbb {P}^3$. Fix 
non negative integers $c_2,c_3,c_4,c_5$. Denoting by $Y'$ the union of 
the connected components of $Y$ intersecting $H$, the scheme $Y 
\setminus Y'$ is a general union of
$c_5$ 5-points, $c_4$ 4-points, $c_3$ 3-points, and $c_2$ 
2-points. Moreover, the subscheme $Y'$ is supported on general points
of $H$ and it is given by a union of points of multiplicity $2$, $3$, 
$4$, $5$ or virtual schemes arising as residual in Lemma
\ref{alehir}. 
\end{remark}

In the following basic lemma we show 
how to apply the Horace differential Lemma \ref{alehir} in
our situation.

\begin{lemma}\label{lemma-a}
Fix a plane $H\subset \mathbb{P}^3$. Let $Y$
be a zero-dimensional scheme as in Remark \ref{pesante},
for some 
integers $c_2$, $c_3$, $c_4$, $c_5$. If the following
condition holds for some positive integer $t$:
\begin{equation}\label{condizione} 
\beta:=\binom{t+2}{2}-\deg(Y\cap H)\geq0,
\end{equation} 
then it is possible to degenerate $Y$ to a scheme $X$ such that
one of the following possibilities is verified:
\begin{enumerate}
\item[(I)] $\deg(X \cap H)=\binom{t+2}{2}$,
\item[(II)] $\deg(X \cap H)<\binom{t+2}{2}$, and all the irreducible
   components of $X$ are supported on $H$. This is possible only if
   $c_2+c_3+c_4+c_5<\beta$ and $c_2+c_3+c_4+c_5\le2$. 
\end{enumerate}
Moreover, if we assume $t\ge 18$ in case {\rm (I)} and $t\ge 15$ in 
case {\rm (II)},
we also have
\begin{equation}\label{caldo}
\deg(\Res_H(X)\cap H)\le \binom{t+1}{2}
\end{equation}
\end{lemma}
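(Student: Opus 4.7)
The plan is to construct the degeneration $X$ explicitly. Starting from $Y$, I would specialize the free fat points of $Y\setminus Y'$ (the $c_5$ 5-points, $c_4$ 4-points, $c_3$ 3-points and $c_2$ 2-points lying off $H$) onto $H$, one at a time. For each such $m$-point the options are: keep it off $H$ (contributing $0$); specialize it fully onto $H$ (contributing $\binom{m+1}{2}$ to $\deg(X\cap H)$ and $\binom{m}{2}$ to $\deg(\Res_H(X)\cap H)$); or, following Lemma \ref{alehir}, apply a Horace differential with some intermediate trace layer $\{kP,H\}$ (contributing $\binom{k+1}{2}$ to the trace and the next entry of the Horace sequence to the residual on $H$).

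The choices are made greedily: keep a running total $\sigma$ of the trace contribution so far, process the free points in some fixed order, and at each step pick the largest admissible contribution not exceeding $\beta-\sigma$. If the algorithm terminates with $\sigma=\beta$, we land in case (I). Otherwise every free point has been specialized with its maximal admissible contribution yet $\sigma<\beta$, so every component of $X$ lies on $H$: this is case (II). A combinatorial inspection of the admissible menus shows that case (II) forces the stronger inequality $15c_5+10c_4+6c_3+3c_2<\beta$ (hence $c_5+c_4+c_3+c_2<\beta$), and it also forces the count bound $c_5+c_4+c_3+c_2\le 2$: with three or more free points the combined menu is rich enough to always hit $\beta$ exactly via the greedy procedure, using 5-point Horace layers of sizes $1,3,6,10$ to make fine adjustments.

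For the residual inequality (\ref{caldo}), I would sum the $\Res_H(X)\cap H$ contributions produced by the construction and match them against the left-hand side of (\ref{eqc2}), setting $a,b,c,u$ equal to the numbers of fully specialized $5,4,3,2$-points, $(e,f,g,h)$ equal to the numbers of 5-points Horace-differentiated with trace $\{kP,H\}$ for $k=4,3,2,1$ respectively, and $v$ absorbing the simple-point contributions already present in $Y'$. The construction is designed so that only 5-points ever use a Horace differential, which is crucial to match the shape of Lemma \ref{c1}. The hypothesis (\ref{eqc1}) then reads $\deg(X\cap H)\le\binom{t+2}{2}$ and holds by construction, so Lemma \ref{c1} delivers (\ref{caldo}); the thresholds $t\ge 18$ in case (I) and $t\ge 15$ in case (II) are imported directly from Lemma \ref{c1}, the weaker one using $e+f+g+h\le 2$, which follows from $c_5+c_4+c_3+c_2\le 2$.

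The main obstacle will be the combinatorial step of showing that the greedy algorithm always either lands exactly at $\beta$ or lands in a case (II) configuration satisfying both stated inequalities, while keeping the quadruple $(e,f,g,h)$ within the short list allowed by Lemma \ref{c1}. This requires a careful analysis of the deficit $\beta-\sigma$ relative to the layer gaps $\{1,3,6,10,15\}$ and a verification that, whenever a 5-point Horace differential is needed, no more than the tabulated number of them is used.
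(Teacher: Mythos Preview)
Your overall strategy---specialize free points onto $H$, use the Horace differential lemma for fine adjustments, and invoke Lemma~\ref{c1} for the residual bound---is the same as the paper's. But two specific claims in your plan are wrong and would derail the argument as written.

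First, the assertion that ``only 5-points ever use a Horace differential'' is false, and is not what the paper does. The paper first specializes free points \emph{fully} onto $H$ until $\beta$ is minimal; this forces $\beta<3$ if a free $2$-point remains, $\beta<6$ if only $3$-points or higher remain, and so on up to $\beta<15$. It then writes the small remainder uniquely as $\beta=10e+6f+3g+h$ with $(e,f,g,h)$ in the list of Lemma~\ref{c1}, and realizes these trace contributions by applying Lemma~\ref{alehir} to whatever points are available: $2$-points if $c_2>0$, else $3$-points, and so on. For example, with $c_5=0$, $c_2=1$, and minimal $\beta=1$, one \emph{must} Horace the $2$-point via the sequence $(1,3)$. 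The reason Lemma~\ref{c1} still applies is that the coefficients $15e,15f,15g,15h$ in \eqref{eqc2} are deliberate overestimates: the first residual layer of any Horace-differentiated $m$-point has size $\binom{m+1}{2}\le 15$, so bounding every differential move by the worst case is valid. You should let $(e,f,g,h)$ count differential moves with trace sizes $10,6,3,1$ regardless of the underlying multiplicity.

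Second, your claim that case~(II) forces $15c_5+10c_4+6c_3+3c_2<\beta$ is false. Take $c_5=1$, the other $c_i=0$, and $\beta=13$: no degeneration reaches $\binom{t+2}{2}$ exactly, so we are in case~(II), yet $15c_5=15>13$. Only the weaker $c_2+c_3+c_4+c_5<\beta$ holds. The paper obtains the bound $c_2+c_3+c_4+c_5\le 2$ not from a ``menu richness'' argument but by direct case inspection after the minimization step: once $\beta$ is minimal and decomposed, the handful of situations where the remaining free points cannot supply all of $e+f+g+h$ differential moves are listed explicitly, and the inequality is checked in each.

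The paper's two-phase scheme (minimize $\beta$, then decompose) makes the membership of $(e,f,g,h)$ in the list of Lemma~\ref{c1} automatic, since that list is precisely the greedy decomposition of each integer in $\{0,\dots,14\}$. Your single-pass greedy could be made to land in the same list, but proving this---the obstacle you yourself flag---requires essentially the same case analysis the paper carries out, so there is no real saving.
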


\begin{proof}
By specializing some of the connected components of $Y$ to isomorphic 
schemes supported on points
of $H$ we may assume that $\beta \ge 0$ is minimal. Let us denote now by $Y'$
the union of the connected components of $Y$ intersecting $H$.

By minimality of $\beta$ it follows that if $c_2>0$ then $\beta< 3$,
if $c_2=0$ and $c_3>0$ then $\beta< 6$, if $c_2=c_3=0$ and $c_4>0$ then
$\beta < 10$, if $c_2=c_3=c_4=0$ and $c_5>0$, then $\beta < 15$. If 
$c_2=c_3=c_4=c_5 =0$ and $\beta>0$, we are obviously in
case (II).

We degenerate now $Y$ to a scheme $X$ described as follows.
The scheme $X$ contains all the connected components of $Y'$.
Write
$$\beta= 10e+6f+3g+h$$
  for a unique quadruple of non-negative integers $(e,f,g,h)$ in the 
following list:
$(0,0,0,0)$,
$(0,0,0,1)$, $(0,0,0,2)$, $(0,0,1,0)$, $(0,0,1,1)$, $(0,0,1,2)$, $(0,1,0,0)$,
$(0,1,0,1)$, $(0,1,0,2)$, $(0,1,1,0)$, $(1,0,0,0)$, $(1,0,0,1)$, 
$(1,0,0,2)$, $(1,0,1,0)$, $(1,0,1,1)$
(i.e. in the list of Lemma \ref{c1}).
If $c_2>0$, then $e=f=g=0$ and $h\le 2$. If $c_2=0$ and $c_3>0$, then
$e=f=0$, $g \le 1$ and $h \le 2$. If $c_2=c_3=0$ and $c_4>0$, then 
$e=0$, $f\le 1$, $g \le 1$,
$h \le 2$ and $h=0$ if $f=g=1$.

Consider first the case $c_2>0$ and recall that in this case $e=f=g=0$
and $h\le2$. Assume now $c_2 \ge h$. Take as $X$ a general union
of $Y'$, $c_5$ 5-points, $c_4$ 4-points, $c_3$ 3-points, $(c_2-h)$ 2-points,
$h$ virtual schemes obtained by applying Lemma~\ref{alehir}
at $h$ general points of $H$ with respect to the sequence
$(1,3)$. Clearly we have $\deg(X\cap H)=\binom{t+2}{2}$.
Let us see now how to specialize $Y$ to $X$ in the remaining cases 
with $c_2>0$.
If $c_2 =1<h$ and $c_3+c_4+c_5\ge1$, then in the previous step we apply
Lemma \ref{alehir} using the unique 2-point and one 3-point or
4-point or 5-point with respect to the sequence
$(1,6,3)$ or $(1,10,6,3)$ or $(1,15,10,6,3)$
(recall that we assumed $c_i>0$ for at least one $i\in \{3,4,5\}$) 
and we conclude in the
same way.
If $c_2 =1<h$ and $c_3=c_4=c_5=0$, then we apply  Lemma \ref{alehir} to
the unique double point with respect to the sequence $(1,3)$, and we
are in case (II). Here and in all later instances of case (II) it is 
straightforward
to check that the inequalities $c_2+c_3+c_4+c_5<\beta$ and 
$c_2+c_3+c_4+c_5\le2$ are verified.

Assume now $c_2=0$ and $c_3>0$. Recall that $e=f=0$, $g \le 1$ and $h \le 2$.
If $c_3\ge g+h$ we take as $X$ a general union
of $Y'$, $c_5$ 5-points, $c_4 $ 4-points, $c_3-g-h$ $3$-points,
$g$ virtual schemes obtained applying Lemma \ref{alehir} at $f$
general points of $H$ with respect to
the sequence $(3,6,1)$ and $g$ virtual schemes obtained applying
Lemma \ref{alehir} at $g$ general points of $H$ with respect to the sequence
$(1,6,3)$.
If $0<c_3<g+h$ and $c_4+c_5\ge g+h-c_3$, then in the previous step we
apply Lemma \ref{alehir} using $c_3$ 3-points, and $(f+g-c_3)$ 
4-points or 5-points,
with respect to the sequences $(3,10,6,1)$ or $(1,10,6,3)$ or 
$(3,6,10,15,1)$ or $(1,15,10,6,3)$.
In all these cases we clearly have $\deg(X\cap H)=\binom{t+2}{2}$.
If  $c_2=0$, $0<c_3<g+h$ and $c_4+c_5< g+h-c_3$, then we have either
$c_3 =1$ and $c_4+c_5\le1$, or $c_3=2$, $g=1$, $h=2$ and $c_4=c_5=0$. 
In both cases
$\beta>c_3+c_4+c_5$. In this cases we can specialize all the components on
$H$, possibly applying Lemma \ref{alehir} and we are in case (II).

Now, assume that $c_2=c_3=0$ and $c_4>0$. Hence $e=0$, $f\le 1$, $g \le 1$,
$h \le 2$ and $h=0$ if $f=g=1$.
If $c_4+c_5\ge f+g+h$, then
we take as $X$ a general union
of $Y'$, $(c_4+c_5-f-g-h)$ 4-points or 5-points, $f$ virtual schemes 
obtained applying
Lemma \ref{alehir} at $f$ general points of $H$ with respect to the
sequence $(6,10,3,1)$ or $(6,15,10,3,1)$,
$g$ virtual schemes obtained applying Lemma \ref{alehir} at $g$
general points of $H$ with respect to the sequence $(3,10,6,1)$ or 
$(3,15,10,6,1)$ and
$h$ virtual schemes obtained applying Lemma \ref{alehir} at $h$
general points of $H$ with respect to the sequence $(1,10,6,3)$ or 
$(1,15,10,6,3)$.
Thus we have again $\deg(X\cap H)=\binom{t+2}{2}$.
If $c_2=c_3=0$ and  $0<c_4+c_5<f+g+h$, then we are again in case
(II), because we can specialize all the 4-points and 5-points on $H$
(possibly applying Lemma \ref{alehir}), since
$c_4+c_5\le f+g+h\le 3$ and $\beta = 6f+3g+h$; in this case we may also assume
that if $f\ne 0$ (i.e. $f=1$), then either one of the 4-points is specialized
with respect to the sequence $(6,10,3,1)$.

Finally assume that $c_2=c_3=c_4=0$ and $c_5>0$.
If $c_5 \ge e+f+g+h$ we apply Lemma \ref{alehir} at $e$ general 
points of $H$ with respect to the sequence $(10,15,6,3,1)$,
at $f$ general 
points of $H$ with respect to the sequence $(6,15,10,3,1)$,
at $g$ general 
points of $H$ with respect to the sequence $(3,15,10,6,1)$
and at $h$ general 
points of $H$ with respect to the sequence $(1,15,10,6,3)$.
In this way we arrive to case (I).
If $e+f+g+h > c_5$, then we start applying again Lemma \ref{alehir} 
as in the previous stop, but we have to stop at some point and we land in case (II).

Finally, we note that the property \eqref{caldo} follows immediately 
from the construction
above and from Lemma \ref{c1}.
\end{proof}

In order to prove the good postulation of schemes in $\mathbb{P}^3$ by 
applying induction, 
we need to know the good postulation of schemes in $\mathbb{P}^2$.
In the  next remark we point out the related results that we need.
\begin{remark}\label{y}
When the general union has multiplicity up to $4$ and $n=2$,
then we can use some results by Mignon (see \cite[Theorem 1]{m}). 
In particular we know that a general fat point scheme in $\PP^2$ of 
multiplicity $1\le m\le 4$ has good postulation in degree $d\ge 3m$. 
Interestingly, this result is valid for any characteristic 
of the ground field $\KK$ 
(for a discussion about $\ch(\KK)$ see Section \ref{fur}).\\
For multiplicities up to $7$ and when $\ch(\KK)=0$,
we can use some results by Yang (see \cite[Theorem 1 and Lemma 7]{y}), 
which imply that a general fat point scheme in $\PP^2$ of multiplicity $m\le 7$
has good postulation in degree $d\ge 3m$. 
\end{remark}

The case with no quintuple points has already been solved, as explained below.
\begin{remark}\label{yy}
When the general union $Y$ has multiplicity up to $4$ and $n=3$, we know
that $Y$ must have good postulation in any degree $d\geq 9$, thanks
to \cite{bb} and \cite{dumnicki2}. There is no self-contained theoretical proof for this, but we have 
a theoretical proof for $d\geq 41$ in \cite{bb}, along with a computer check up to $d=13$, and the missing 
computations can be found in \cite{dumnicki2}. 
\end{remark}


\subsection{Summary of our computational results}
\ \\
We list the results from Section \ref{computer} that we need
in the following sections.

\begin{lemma} \label{punti-new}
The following linear systems are non-special and have virtual 
dimension $-1$:
\begin{enumerate}
\item $\LL(3;2^5),$ 
\item $\LL(9;4^a,3^b)$ with $2a+b=22$, 
\item $\LL(9;5^4,4^4)$
\item $\LL(12;5^a,4^b,3^c)$ with $7a+4b+2c =91$.
\end{enumerate}
\end{lemma}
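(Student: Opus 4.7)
The statement is essentially a computational result, with the actual verification deferred to Section~\ref{computer}. In each of the four families one checks immediately that $\deg(Y)=\binom{d+3}{3}$, so non-speciality with virtual dimension $-1$ is equivalent to $\hh^0(\PP^3,\mathcal{I}_Y(d))=0$ for general supporting points; equivalently, the square matrix $M$ of size $\binom{d+3}{3}$ encoding the vanishing conditions must be non-singular. The sizes involved are $20$ for (1), $220$ for (2) and (3), and $455$ for (4).

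For a fixed family my plan is as follows. First, pick specific integer coordinates for the supporting points $P_1,\ldots,P_k$ (with the multiplicities prescribed by the family), and build the integer matrix $M$ whose columns are indexed by the monomials of degree $d$ and whose rows record the partial derivatives of order strictly less than $m_i$ evaluated at $P_i$ for each $i$. A direct computation of $\det M$ over $\mathbb{Z}$ would be expensive, so I would reduce modulo a sufficiently large prime $p$ and use MAGMA's optimised linear algebra over $\mathbb{F}_p$; if $\det M \not\equiv 0 \pmod p$ then $\det M\neq 0$ in $\mathbb{Z}$, hence in $\QQ$. This exhibits one specific configuration with $\hh^0(\PP^3,\mathcal{I}_Y(d))=0$, and upper semicontinuity of $\hh^0$ on the parameter space of $k$-tuples then forces $\hh^0=0$ for the generic configuration over $\QQ$; Lemma~\ref{www} upgrades this to a generic configuration over any characteristic-$0$ field $\KK$.

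The main obstacle is not conceptual but computational. Case (1) is small and in fact follows a posteriori from the Alexander--Hirschowitz theorem applied to $\LL(3;2^5)$, which is not among the exceptional triples. Case (3) is a single instance, albeit with a $220\times 220$ matrix. Cases (2) and (4), however, require running the verification over many sub-families: the constraint $2a+b=22$ in (2) yields $12$ sub-cases, and $7a+4b+2c=91$ with $a,b,c\geq 0$ (which forces $a$ to be odd) produces on the order of $80$ sub-cases, each requiring a dense $455\times 455$ determinant computation. Distributing these computations across servers and publishing the digital certificates (as described in the outline of the introduction) is what makes the proof both tractable and independently verifiable.
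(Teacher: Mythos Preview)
Your proposal is correct and matches the paper's own approach: Lemma~\ref{punti-new} is proved purely by computer in Section~\ref{computer}, via exactly the procedure you describe (build the evaluation matrix over $\mathbb{F}_p$, check it has full rank, then use semicontinuity and Lemma~\ref{www} to lift to characteristic~$0$). Your counts of the matrix sizes and of the sub-cases in families (2) and (4) are accurate, and the paper further remarks that cases (1) and (2) had already been verified in \cite{dumnicki2}; your observation that (1) also follows from Alexander--Hirschowitz is a nice aside not made in the paper.
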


\begin{theorem}
\label{th11-21}
Fix non-negative integers $d, 
w, x, y, z$ such that $11\leq d \leq 21$ and $0\leq z\leq 4$.
Let $N= {{d+3}\choose{3}}$.
 Let $Y\subset
\mathbb{P}^3$  be a general union of $w$ 5-points, $x$ 4-points, $y$ 
3-points and $z$ 2-points such that
$$
  N-3 \leq 35w+20x+10y+4z \leq N+\Delta \,,
$$
where $\Delta$ is as in Lemma \ref{2agoprop}.

Then $Y$ has good postulation.
\end{theorem}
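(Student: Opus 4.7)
The plan is a direct computer-assisted verification, carried out case by case. The constraints $11 \le d \le 21$, $0\le z\le 4$, and $N-3 \le 35w+20x+10y+4z \le N+\Delta$ (with $\Delta\le 13$) carve out only finitely many quadruples $(w,x,y,z)$ for each $d$. For each such tuple, good postulation amounts to a single rank condition on an explicit matrix: if $\deg(Y)\le N$ we need the matrix of linear conditions imposed by $Y$ on forms of degree $d$ to have rank $\deg(Y)$ (i.e.\ $\hh^1=0$), while if $\deg(Y)\ge N$ we need it to have rank $N$ (i.e.\ $\hh^0=0$). In either case the required rank is $\min\{N,\deg(Y)\}$.

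The first step is to invoke Lemma \ref{www}: it suffices to exhibit, for each valid $(d,w,x,y,z)$, some disjoint union of the prescribed fat points over any field for which the claimed vanishing holds. Combining this with the semicontinuity of rank and the fact that the matrix entries are polynomials with integer coefficients in the coordinates of the points, we may specialize the supporting points to points with coordinates in a prime field $\FF_p$ of sufficiently large characteristic: if the resulting integer matrix has maximal rank modulo $p$, then it has maximal rank over $\QQ$, and consequently the generic configuration over $\KK$ gives good postulation. Passing to $\FF_p$ is essential because the matrices involved are far too large to handle by exact rational arithmetic.

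The computational core, for each tuple, is then to (i) pick $w+x+y+z$ points in $\FF_p^3$; (ii) build the coefficient matrix of the linear functionals that take a degree-$d$ form to the values of all its partial derivatives of order $<m_i$ at the $i$-th point (with $m_i\in\{5,4,3,2\}$), expressed in a monomial basis; and (iii) compute its rank over $\FF_p$ and compare it with $\min\{N,\deg(Y)\}$. Only if every case in the allowed range passes this test do we obtain the theorem, so the enumeration of tuples must be complete and systematic. The reductions provided by Lemmas \ref{www} and \ref{2agoprop} are precisely what makes the set of cases to be checked finite and sharply bounded.

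The main obstacle is sheer size: for $d=21$ one has $N=\binom{24}{3}=2024$, with comparable row counts, so the matrices reach into the millions of entries, and the number of tuples in the allowed range for the various $(d,z)$ is substantial. The paper's remedy is to rely on MAGMA's highly optimized linear algebra over finite fields and to publish the resulting digital certificates (seeds for the point configurations and intermediate pivot data) so that any reader can independently reproduce the verification; all implementation details are deferred to Section \ref{computer}.
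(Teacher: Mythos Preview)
Your proposal is correct and mirrors the paper's own argument essentially step for step: a finite enumeration of the admissible tuples, construction of the evaluation/derivative matrix at points chosen in $\FF_p$, a rank check in MAGMA, and the transfer back to characteristic zero via semicontinuity together with Lemma~\ref{www} (this is exactly the content of Remark~\ref{wow} and the routine \texttt{exact} in Section~\ref{computer}). The only inaccuracy is cosmetic: the digital certificates record the MAGMA version, input parameters, seed/step, prime, timings and failures, not ``intermediate pivot data''.
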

\begin{theorem}
\label{th22-37}
Fix non-negative integers $d,q, 
w, x, y, z$ such that:
\begin{itemize}
\item $22\leq d \leq 37$,
\item $0\leq z\leq 4$,
\item $0\leq 2x+y\leq 21$,
\item $0\leq w\leq 3$ or $0\leq x\leq3$.
\end{itemize}
Let $N= {{d+3}\choose{3}}$.
 Let $Y\subset
\mathbb{P}^3$  be a general union of $w$ 5-points, $x$ 4-points, $y$ 
3-points and $z$ 2-points such that
$$
  N-3 \leq 220q+35w+20x+10y+4z \leq N+\Delta \,,
$$
where $\Delta$ is as in Lemma \ref{2agoprop}.

Then $Y$ has good postulation.
\end{theorem}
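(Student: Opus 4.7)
The plan is to combine a reduction to a narrow degree-total range (in the spirit of Lemma~\ref{2agoprop}), an inductive Horace-style degeneration onto a plane, and the direct computer checks of Section~\ref{computer}. First I would establish a Lemma~\ref{2agoprop}-analog that incorporates the $220q$ contribution of the $q$ \mbox{10-points} implicit in $Y$, reducing to the narrow range $N - 3 \le 220q + 35w + 20x + 10y + 4z \le N + \Delta$. Together with the explicit bounds $z \le 4$, $2x+y \le 21$, and the disjunction ``$w \le 3$ or $x \le 3$'', this cuts the admissible quintuples $(q,w,x,y,z)$ for each fixed $d \in \{22,\ldots,37\}$ down to a manageable finite list. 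The argument then proceeds by induction on $d$: for $d = 22$ the residual after one Horace step lies in degree $21$ and is handled via Theorem~\ref{th11-21}; for $d \ge 23$ the residual lies in degree $d-1$ and is handled by the inductive hypothesis of the present theorem.

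The core step is a Horace degeneration onto a plane $H \subset \mathbb{P}^3$. We specialize an appropriate subset of the \mbox{10-points} and of the fat points of multiplicity $\le 5$ onto $H$, chosen so that $\deg(X \cap H) = \binom{d+2}{2}$ exactly. The \mbox{10-points} are specialized via the ordinary Castelnuovo sequence Lemma~\ref{castelnuovo} --- a \mbox{10-point} on $H$ contributes a length-$55$ planar \mbox{10-point} to the trace and leaves a length-$165$ \mbox{9-point} in the residual --- while the smaller fat points are specialized via Lemma~\ref{lemma-a} and the differential Horace Lemma~\ref{alehir}. The arithmetic Lemmas~\ref{aritmetico} and~\ref{c1} are used to verify the numerical feasibility of this matching and the residual-trace bound $\deg(\Res_H(X) \cap H) \le \binom{d+1}{2}$ from Lemma~\ref{lemma-a}. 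The trace on $H \simeq \mathbb{P}^2$ then carries multiplicities up to $10$ in degree $d$; its good postulation is established by combining Yang's theorem (Remark~\ref{y}) for the multiplicities up to $7$ (valid since $d \ge 22 \ge 15$) with computational input in the spirit of Lemma~\ref{punti-new} for the higher multiplicities, possibly via a further Horace reduction from $\mathbb{P}^2$ onto a line.

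The main obstacle is the residual $\Res_H(X) \subset \mathbb{P}^3$: it contains \mbox{9-points} arising from the specialized \mbox{10-points}, which do not appear in the statement of either Theorem~\ref{th11-21} or Theorem~\ref{th22-37}. To close the induction, one must appeal to Dumnicki's results \cite{dumnicki,dumnicki2}, which allow one to trade a \mbox{9-point} for a configuration of lower-multiplicity fat points preserving the good-postulation property, thereby bringing the residual configuration within the scope of Theorem~\ref{th11-21} (when $d = 22$) or of the inductive hypothesis (when $d \ge 23$). The technical hypotheses $z \le 4$, $2x+y \le 21$, and ``$w \le 3$ or $x \le 3$'' in the statement are presumably precisely those under which this reduction can be carried out uniformly --- the disjunction in particular ensures that no more than one of $w$ or $x$ can blow up, keeping the combinatorial bookkeeping tractable. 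Residual configurations that slip outside the theoretical reduction are then verified directly by the deterministic MAGMA-based computations of Section~\ref{computer}, as indicated in items (d) and (e) of the introduction.
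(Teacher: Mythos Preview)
Your proposal misreads both the role and the proof of Theorem~\ref{th22-37}. In the paper this theorem is a \emph{computational lemma}: its proof, given in Section~\ref{computer}, is nothing more than running the MAGMA program \texttt{fat\_points\_10p.magma} over the finite list of admissible tuples $(q,w,x,y,z)$ and checking the rank of the evaluation matrix over $\mathbb{F}_{31991}$. There is no Horace degeneration, no induction on $d$, no residual-trace argument. The bounds $z\le 4$, $2x+y\le 21$, and ``$w\le 3$ or $x\le 3$'' are not hypotheses that enable a theoretical reduction; they are the \emph{output} of Lemma~\ref{secondo}, which uses Dumnicki's Theorem~\ref{teo-dumnicki} to collapse the full range of $(w,x,y,z)$ down to this finite list (introducing the auxiliary $10$-points counted by $q$). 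The logical flow is: Lemma~\ref{secondo} reduces Theorem~\ref{i1} for $22\le d\le 37$ to the finitely many cases of Theorem~\ref{th22-37}, and Theorem~\ref{th22-37} is then discharged by brute-force computation.

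Even judged on its own terms, your sketch has genuine gaps. First, Dumnicki's theorem goes the other way: it lets you replace a configuration of small points by a single larger point (provided the small system is non-special with virtual dimension $-1$), not a $9$-point by smaller ones. So the step ``trade a $9$-point for a configuration of lower-multiplicity fat points'' is not available, and the residual you produce is not covered by any result in the paper. Second, the trace on $H$ carries $10$-points in degree $d\le 37$; Yang's result (Remark~\ref{y}) only reaches multiplicity~$7$, and you invoke unspecified ``computational input'' to fill the gap --- but once you are willing to compute, you may as well compute the original statement directly, which is exactly what the paper does. Third, the inductive scheme you propose would require the case $d-1$ of Theorem~\ref{th22-37} to absorb $9$-points, which it does not.
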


\begin{theorem}
\label{th38-52}

Fix non-negative integers $d,r, 
w, x, y, z$ such that:
\begin{itemize}
\item $38\leq d \leq 52$,
\item $0\leq z\leq 4$,
\item $0\leq 2x+y\leq 41$,
\item $0\leq w\leq 12$.
\end{itemize}
Let $N= {{d+3}\choose{3}}$.
 Let $Y\subset
\mathbb{P}^3$  be a general union of $w$ 5-points, $x$ 4-points, $y$ 
3-points and $z$ 2-points such that
$$
  N-3 \leq 455r+35w+20x+10y+4z \leq N+\Delta \,,
$$
where $\Delta$ is as in Lemma \ref{2agoprop}.

Then $Y$ has good postulation.
\end{theorem}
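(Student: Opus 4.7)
The plan is to prove Theorem \ref{th38-52} by induction on $d$, bootstrapping from Theorem \ref{th22-37}. I understand $Y$ to include (implicitly) $r$ 13-points beyond the enumerated low-multiplicity fat points; the role of the 13-points is to absorb bulk degree so that the constraint $w\le 12$ can be maintained as the induction runs. By Lemma \ref{2agoprop}, it suffices to consider the extremal range $N-3\le \deg(Y)\le N+\Delta$ prescribed in the statement, and by Remark \ref{proofofcorollary} good postulation for the extremal degree is enough.

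The main move is a Horace specialization to a general hyperplane $H\subset\mathbb{P}^3$. I first specialize one 13-point to $H$ via the classical Lemma \ref{castelnuovo}: its trace is the 13-fat point of $H$ (of length $\binom{14}{2}=91$) and its residual is a 12-point of $\mathbb{P}^3$. I then invoke Lemma \ref{lemma-a}, applied to the fat points of multiplicity $\le 5$, to fill up the trace so that $\deg(X\cap H)=\binom{d+2}{2}$, landing in case (I) with $t=d$; the arithmetic side-conditions (\ref{eqc1}) and (\ref{eqc2}) of Lemma \ref{c1} are verified using Lemma \ref{aritmetico} together with the hypotheses $w\le 12$ and $2x+y\le 41$, which together force the quintuple-point contribution to stay small enough.

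Applying Lemma \ref{castelnuovo} reduces the vanishing to two subproblems: vanishing for the trace on $H\cong \mathbb{P}^2$ in degree $d$, and for the residual in $\mathbb{P}^3$ in degree $d-1$. The trace is a general union in $\mathbb{P}^2$ of a 13-fat point, fat points of multiplicity $\le 5$, and the virtual schemes produced by Lemma \ref{alehir}; I handle it using Mignon's and Yang's results recalled in Remark \ref{y} for the low multiplicities, supplemented by the computer input of Lemma \ref{punti-new} for the specific systems that arise. For the residual I verify that the new composition — now containing a 12-point descendant plus the modified lower-multiplicity profile — either fits the inductive hypothesis at degree $d-1$ or, once the 12-point has been peeled down further by additional Horace steps, falls into the range of Theorem \ref{th22-37}. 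Iterating this degeneration at most $13r$ times eliminates each 13-point completely, and the process terminates when the effective degree drops to $37$.

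The hard part will be the control of the trace on $H$: Yang's theorem covers multiplicities only up to $7$, so the presence of a 13-fat point in the trace cannot be disposed of by an off-the-shelf result. I expect to need a nested Horace argument inside $H$ (peeling the 13-point by restriction to a general line) combined with items (3) and (4) of Lemma \ref{punti-new} to cover the threshold configurations. A second, purely combinatorial obstacle will be the bookkeeping across several parallel degenerations: at every step I must simultaneously verify that $\deg(X\cap H)$ does not overshoot $\binom{d+2}{2}$, that the residual satisfies the inequality of Lemma \ref{c1}, and that the surviving parameters $(r,w,x,y,z)$ continue to lie inside the hypotheses of Theorem \ref{th38-52} or Theorem \ref{th22-37}. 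This will require splitting into several subcases according to which of $r,w,x,y,z$ is the dominant quantity.
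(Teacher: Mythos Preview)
Your proposal attempts a theoretical induction, but in the paper Theorem~\ref{th38-52} is not proved theoretically at all: it is a direct computer verification. The constraints $w\le 12$, $2x+y\le 41$, $z\le 4$ together with $N-3\le 455r+35w+20x+10y+4z\le N+\Delta$ leave only a bounded list of tuples $(r,w,x,y,z)$ for each $d$ (the paper reports at most $405$ cases even up to $d=100$), and the programme \texttt{fat\_points\_13p.magma} checks each one over $\mathbb{F}_{31991}$; semicontinuity (Remark~\ref{wow}) then lifts the result to characteristic zero. Theorem~\ref{th38-52} is thus an \emph{input} to the theoretical machinery, not a consequence of it.

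Your inductive scheme, as written, has genuine gaps that would prevent it from going through. First, after you specialize a 13-point to $H$, the trace in $\mathbb{P}^2$ contains a 13-fat point; Remark~\ref{y} stops at multiplicity $7$, and the items of Lemma~\ref{punti-new} you cite are statements about $\mathcal{L}_3$, i.e.\ systems in $\mathbb{P}^3$, so they give you nothing on the plane side. A nested Horace-on-a-line argument would require good postulation results for high-multiplicity points on $\mathbb{P}^1$ traces at each of thirteen levels, none of which are available in the paper. Second, the residual carries a 12-point, which fits neither the hypotheses of Theorem~\ref{th38-52} (13-points) nor Theorem~\ref{th22-37} (10-points); you would need an entire parallel hierarchy of statements for multiplicities $12,11,\dots$ down to wherever you land, and those statements are themselves what the computer is checking. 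Third, Lemma~\ref{lemma-a} is formulated only for schemes of type~$(\star)$, i.e.\ with components of multiplicity at most $5$, so invoking it verbatim with a 13-point already on $H$ is not legitimate. In short, the obstacles you flag as ``the hard part'' are not merely hard but, with the tools of this paper, unavailable --- which is exactly why the authors resort to computation here.
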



\newpage

\section{The proof of Theorem \ref{i1} for high degrees }\label{Sc}

This section is devoted to the proof of Theorem \ref{i1} for high degrees, that is for $d\ge 53$.
Throughout the section we fix a hyperplane $H\subset\mathbb {P}^3$.
We recall that our ground field $\KK$ has characteristic zero.

In the different steps of the proof we will work with zero-dimensional
schemes that are slightly more general than a union of fat points. In
particular, we will say that a zero-dimensional
scheme $Y$ is {\em of type $(\star)$} if its irreducible components
are of the following type:
\begin{itemize}
\item[-]
$m$-points with $2\le m\le 5$, supported on general points of $\mathbb {P}^3$,
\item[-]
$m$-points with $1\le m\le 5$, or virtual schemes arising as residual
in the list of Lemma
\ref{alehir}, supported on general points of $H$.
\end{itemize}

Given a scheme $Y$ of type $(\star)$ satisfying
\eqref{condizione} for some integer $t$, we will say that $Y$ is of type
(I,$t$) if, when we apply Lemma \ref{lemma-a} to $Y$, we are in case
(I). Otherwise we say that $Y$ is of type (II,$t$).

We fix now (and we will use throughout this section)
the following notation, for any integer $t$:
given a scheme $Y_t$ of type $(\star)$ and satisfying
\eqref{condizione} for $t$, we will denote by $X_t$ the
specialization described in Lemma \ref{lemma-a}.
We write the residual $\Res_H(X_t)=Y_{t-1}\cup Z_{t-1}$,
where $Y_{t-1}$ is the union of all unreduced components of
$\Res_H(X_t)$ and $Z_{t-1}=\Res_H(X_t)\setminus Y_{t-1}$.
Clearly  $Z_{t-1}$ is the union of finitely many simple points of
$H$.
Thus, at each step $t \mapsto t-1$ we will have
$$
   Y_t \mapsto X_t \mapsto  \Res_H(X_t)= Y_{t-1}\cup Z_{t-1} \,.
$$
For any integer $t$, we set $z_{t}:= | Z_{t}|$,
$\alpha_{t}:=\deg(Y_{t})=\deg(X_{t})$, and
$$\delta_t:=\max\left(0,\binom{t+2}{3} -\deg(Y_{t-1}\cup
Z_{t-1})\right).$$

We fix the following statements:
\begin{itemize}
\item[-] ${\A}(t)=\{Y_t \mbox{ has good postulation in degree $t$}\}$,
\item[-] ${\B}(t)=\{\Res_H(X_t) \mbox{ has good postulation in degree 
$t-1$}\}$,
\item[-] ${\C}(t)=\{\hh^0(\mathbb {P}^3,\mathcal 
{I}_{\Res_H(Y_{t-1})}(t-2)) \le \delta_t\}$.
\end{itemize}

\begin{claim}\label{claim-a0}
Fix $t\ge 16$.
If $Y_t$ is a zero-dimensional scheme of type (II,$t$), then
it has good postulation, i.e. ${\A}(t)$ is true.  Moreover if $t\ge 
17$, also ${\B}(t)$ is true.
\end{claim}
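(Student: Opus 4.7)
My plan is to apply Lemma \ref{lemma-a} to $Y_t$ to obtain a specialization $X_t$ which, by the case (II) hypothesis, is supported entirely on $H$ and satisfies $\deg(X_t\cap H)<\binom{t+2}{2}$ together with the count bound $c_2+c_3+c_4+c_5\le 2$. Combining upper semicontinuity of cohomology with the Horace differential Lemma \ref{alehir} (for the at most two off-$H$ components that get specialized to $H$ via virtual schemes) and the Castelnuovo Lemma \ref{castelnuovo} on the rest, one obtains the master inequality
\[
  h^i(\mathbb{P}^3,\mathcal{I}_{Y_t}(t)) \le h^i(\mathbb{P}^3,\mathcal{I}_{\Res_H(X_t)}(t-1)) + h^i(H,\mathcal{I}_{X_t\cap H}(t)),\quad i=0,1.
\]

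The core strategy is to iterate the Castelnuovo lemma along $H$ all the way down to the empty scheme, converting each residual step into a cohomology question in the plane $H\cong\mathbb{P}^2$ which is in turn settled by the theorems of Mignon and Yang (Remark \ref{y}). Setting $R_k:=\Res_H^k(X_t)$, each $R_k$ is supported on $H$ with maximum multiplicity at most $5-k$, and $R_5=\emptyset$, so
\[
  h^1(\mathbb{P}^3,\mathcal{I}_{X_t}(t)) \le \sum_{k=0}^{4} h^1(H,\mathcal{I}_{R_k\cap H}(t-k)).
\]
Each summand vanishes by Yang's theorem for $k=0$ (multiplicity up to $5$) and Mignon's theorem for $1\le k\le 4$ (multiplicity at most $5-k\le 4$), since the required condition $t-k\ge 3(5-k)$ rearranges to $t\ge 15-2k$, which is trivially satisfied for $t\ge 16$. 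This establishes $\A(t)$. An entirely analogous but shorter iteration starting from $R_1=\Res_H(X_t)$ (four summands, $k=1,\ldots,4$, the first viewed at degree $t-1$) yields $\B(t)$, whose hardest Mignon condition $t-1\ge 3\cdot 4=12$ gives $t\ge 13$ and is comfortably met for $t\ge 17$.

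The main obstacle is the arithmetic bookkeeping: at every level $k$ one must verify $\deg(R_k\cap H)\le \binom{t-k+2}{2}$, so that good postulation in the plane delivers the $h^1=0$ conclusion we need rather than $h^0=0$. The bound at $k=0$ is the case (II) inequality $\deg(X_t\cap H)<\binom{t+2}{2}$, the bound at $k=1$ is exactly \eqref{caldo} from Lemma \ref{lemma-a}, and for $k\ge 2$ the bound propagates via a routine numerical comparison: the per-fat-point trace ratio $\binom{m-k+1}{2}/\binom{m+1}{2}$ decays in $k$ strictly faster than the plane-capacity ratio $\binom{t-k+2}{2}/\binom{t+2}{2}$ does, leaving ample margin at each step.
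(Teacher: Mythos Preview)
Your overall plan---specialize to $X_t$ supported entirely on $H$, then peel off layers with the Castelnuovo sequence and kill each planar trace via Remark~\ref{y}---is exactly the paper's strategy. But the key sentence ``each $R_k$ is supported on $H$ with maximum multiplicity at most $5-k$'' is false, and this is not a cosmetic slip: it is the reason the threshold in the Claim is $t\ge 16$ rather than the $t\ge 15$ your formula $t\ge 15-2k$ would give.

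The point you are missing is that when an off-$H$ $5$-point is specialized to $H$ via Lemma~\ref{alehir} (and equally when $Y_t$, being of type~$(\star)$, already carries a differential residual on $H$ from a previous step), the differential \emph{residual} has $\{5P,H\}$ as its first layer. Concretely, for the sequence $(1,15,10,6,3)$ the trace contributing to $X_t\cap H$ is only $\{P,H\}$, while the residual is the virtual scheme $(\{5P,H\},\{4P,H\},\{3P,H\},\{2P,H\})$; hence $R_1\cap H=\Res_H(X_t)\cap H$ contains a genuine $5$-point of $H$. This is precisely why the paper invokes Yang's bound $t-1\ge 15$ at the second trace (so $t\ge 16$), and only from $R_2\cap H$ onward can one say there are no quintuple points. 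Your ``$5-k$'' rule holds for ordinary fat points on $H$ but not for the virtual residuals of Lemma~\ref{alehir}. The same oversight makes your derivation of $\B(t)$ with threshold $t\ge 13$ unjustified; once corrected, the first trace of $\Res_H(X_t)$ again carries $5$-points and you need $t-1\ge 15$.

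A second, smaller gap: your justification of the degree bounds $\deg(R_k\cap H)\le\binom{t-k+2}{2}$ for $k\ge 2$ via a ``routine numerical comparison'' of ratios is not a proof. The virtual schemes have non-monotone layer profiles (e.g.\ $(10,15,6,3,1)$), so no uniform per-component decay ratio is available. The paper handles $k=1$ with inequality~\eqref{caldo} and $k\ge 2$ by invoking Lemma~\ref{c1}; you should do the same (or give an explicit bookkeeping of layer lengths) rather than appeal to a heuristic.
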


\begin{proof}
Since $Y_t$ is of type (II,$t$), when we apply Lemma \ref{lemma-a} to 
$Y_t$, we obtain a
specialization $X_t$ such that all its irreducible components are supported on
$H$ and such that $\deg(X_t\cap H)\leq\binom{t+2}{2}$.

We prove now the vanishing $\hh^1({\bf{P}}^3,\mathcal {I}_{Y_t}(t)) = 0$.
By semicontinuity, it is enough to prove the
vanishing
$\hh^1({\bf{P}}^3,\mathcal {I}_{X_t}(t)) = 0$.
Notice that by taking the residual of $X_t$ with respect to $H$ for at most
five times we get at the end the empty set.

Since $\deg(X_t\cap H)\le\binom{t+2}{2}$ and $t\ge 15$,
by Remark \ref{y} it follows the vanishing
$\hh^1({\bf{P}}^2,\mathcal{I}_{X_t\cap H}(t)) = 0.$
Let $R_{t-1}$ denote the residual $\Res_H(X_t)$ and recall that
any component of $R_{t-1}$ is supported on $H$.
We check now that
$\hh^1({\bf{P}}^3,\mathcal{I}_{R_{t-1}}(t-1)) = 0.$
\\
In order to do this
we take again the trace and the residual with respect to $H$.
By \eqref{caldo} we know that
$\deg(\Res_H(X_t)\cap H)\leq\binom{t+1}{2}$
then again by Remark \ref{y}, since $t-1\ge15$, we have
$\hh^1({\bf{P}}^2,\mathcal{I}_{R_{t-1}\cap H}(t-1)) = 0.$

We repeat this step taking $R_{t-2}:=\Res_H(R_{t-1})$ and noting
that the trace $R_{t-2}\cap H$ has degree less or equal than
$\binom{t}{2}$, by Lemma \ref{c1}.
Moreover this time the scheme $R_{t-2}\cap H$ cannot contain quintuple
points,
in fact it is a general union of quartuple, triple, double and simple points.
Hence by Remark \ref{y} we have
$\hh^1({\bf{P}}^2,\mathcal{I}_{R_{t-2}\cap H}(t-2)) = 0$, since  $t-2\ge 12$.

We repeat once again the same step and we obtain
$R_{t-3}:=\Res_H(R_{t-2})$. Now the trace ${R_{t-3}\cap H}$ contains
only triple, double or simple points and so we have again the vanishing
$\hh^1({\bf{P}}^2,\mathcal{I}_{R_{t-3}\cap H}(t-3)) = 0$,
by Remark \ref{y}, since $t-3\geq 9$. Set $R_{t-4}:= \Res_H(R_{t-3})$.
The scheme $R_{t-4}\cap H$ is reduced and formed by less than 
$\binom{t-2}{2}$ general
points of $H$. Hence $\hh^1({\bf{P}}^2,\mathcal{I}_{R_{t-4}\cap H}(t-4)) = 0$.
Notice that this time the residual
$\Res_H(R_{t-4})$ must be empty and so, since
$\mathcal{I}_{\Res_H(R_{t-4})}=\mathcal{O}_{\mathbb {P}^3}$, we obviously have
$\hh^1({\bf{P}}^3,\mathcal{I}_{\Res_H(R_{t-4})}(t-5)) = 0.$
Hence  thanks to Lemma \ref{castelnuovo} we obtain
$\hh^1({\bf{P}}^3,\mathcal {I}_{Y_t}(t)) = 0$.

We also know that
\begin{equation}\label{stima}
\deg(Y_t)=\deg(X_t)\leq
\binom{t+2}{2}+\binom{t+1}{2}+\binom{t}{2}+\binom{t-1}{2}+\binom{t-2}{2} 
\leq \binom{t+3}{3}
\end{equation}
where the second inequality is equivalent to $\binom{t-2}{3}\geq0$, which is
true if $t\ge 4$. Hence $Y_t$ has good postulation,
that is, ${\A}(t)$ is true.

It is easy to see that  also the scheme $\Res(X_t)$ must be of type
(II,$t-1$). Hence ${\B}(t)$ follows from the
first part of the proof.
\end{proof}

\begin{claim}\label{claim-a}
Fix $t\ge15$.
If $Y_t$ is a zero-dimensional scheme of type (I,$t$), then ${\A}(t)$ is
true if ${\B}(t)$ is true.
\end{claim}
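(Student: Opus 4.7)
The plan is to argue in a single application of the Castelnuovo exact sequence: because $Y_t$ is of type $(I,t)$ I can control the trace directly via Remark \ref{y}, and because I am allowed to assume $\B(t)$ I can control the residual without any further iteration. This is the essential difference from the proof of Claim \ref{claim-a0}, where, in the absence of $\B(t)$, Lemma \ref{castelnuovo} had to be applied recursively down to the empty scheme.

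First I would invoke semicontinuity of cohomology to replace $Y_t$ by the specialization $X_t$ provided by Lemma \ref{lemma-a}. Applying Lemma \ref{castelnuovo} to $X_t$ and the hyperplane $H$ in degree $t$ gives, for $i=0,1$,
\[
\hh^i(\mathbb {P}^3,\mathcal{I}_{X_t}(t)) \le \hh^i(H,\mathcal{I}_{X_t\cap H}(t)) + \hh^i(\mathbb {P}^3,\mathcal{I}_{\Res_H(X_t)}(t-1)),
\]
so it suffices to make both terms on the right-hand side vanish.

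For the trace term, the hypothesis that $Y_t$ is of type $(I,t)$ forces $\deg(X_t\cap H)=\binom{t+2}{2}$. Inspecting the construction in Lemma \ref{lemma-a} together with the Alexander--Hirschowitz table of Lemma \ref{alehir}, the scheme $X_t\cap H$ is a general union of ordinary fat points on $H\simeq\mathbb {P}^2$ of multiplicity at most $5$ (the top layer of each virtual scheme arising from Lemma \ref{alehir} is an honest fat point on $H$ of multiplicity $\le 5$, and the components of $Y'$ already sitting on $H$ contribute fat points of multiplicity $\le 5$ as well). Since $t\ge 15 = 3\cdot 5$, Remark \ref{y} yields good postulation of $X_t\cap H$ in degree $t$, and the balanced length $\deg(X_t\cap H)=\binom{t+2}{2}$ then forces both $\hh^0$ and $\hh^1$ of $\mathcal{I}_{X_t\cap H}(t)$ to vanish. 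For the residual term, $\B(t)$ is by definition the good postulation of $\Res_H(X_t)$ in degree $t-1$; since $\deg(\Res_H(X_t))-\binom{t+2}{3}$ has the same sign as $\deg(Y_t)-\binom{t+3}{3}$, the $\hh^0$- (or $\hh^1$-) vanishing delivered by $\B(t)$ is exactly the one that matches the vanishing we need for $Y_t$.

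Substituting these two zeros into the Castelnuovo inequality gives the required vanishing for $X_t$, and hence for $Y_t$ by semicontinuity, proving $\A(t)$. The only delicate point I expect is the verification that $X_t\cap H$ really is a genuine union of ordinary fat points of multiplicity $\le 5$ on $H$, so that Remark \ref{y} applies verbatim; this however is immediate from the explicit shape of $X_t$ recorded in the proof of Lemma \ref{lemma-a}, and the remaining matching of $\hh^0$ versus $\hh^1$ on the two sides of Castelnuovo is routine bookkeeping.
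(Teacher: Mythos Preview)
Your proposal is correct and follows essentially the same approach as the paper: specialize to $X_t$, use Remark \ref{y} (valid since $t\ge 15=3\cdot 5$) together with $\deg(X_t\cap H)=\binom{t+2}{2}$ to kill both cohomology groups of the trace, and then invoke $\B(t)$ for the residual via Lemma \ref{castelnuovo}. The only cosmetic difference is that the paper passes through the full Castelnuovo exact sequence to obtain the \emph{equalities} $\hh^i(\mathbb{P}^3,\mathcal{I}_{X_t}(t))=\hh^i(\mathbb{P}^3,\mathcal{I}_{\Res_H(X_t)}(t-1))$, which makes the transfer of good postulation automatic, whereas you use only the inequality form of Lemma \ref{castelnuovo} and then do the (correct) sign-matching $\deg(Y_t)-\binom{t+3}{3}=\deg(\Res_H(X_t))-\binom{t+2}{3}$ by hand.
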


\begin{proof}
Since $Y_t$ is of type (I,$t$), we can apply Lemma \ref{lemma-a} and we obtain a
specialization $X_t$  such that $\deg(X_t\cap H)=\binom{t+2}{2}$.
Thus, since $t\geq 15$, by Remark \ref{y} it follows 
$$\hh^0(H,\mathcal {I}_{X_t\cap H}(t))=\hh^1(H,\mathcal {I}_{X_t\cap 
H}(t))=0.$$

Then, thanks to Lemma \ref{castelnuovo}, it follows, for $i=0,1$,
$$\hh^i(\mathbb {P}^3,\mathcal {I}_{X_t}(t))= \hh^i(\mathbb 
{P}^3,\mathcal {I}_{\Res_H(X_t)}(t-1)).$$
Thus in order to prove that the
scheme $X_t$ has good postulation in degree $t$,
it is sufficient to check the good postulation of $\Res_H(X_t)$ in degree
$t-1$.
\end{proof}
\begin{claim}\label{claim-b} If ${\A}(t-1)$ and ${\C}(t)$ are true, 
then ${\B}(t)$ is true.\end{claim}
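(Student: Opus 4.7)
The plan is to establish the inequality $h^0(\mathbb{P}^3,\I_{\Res_H(X_t)}(t-1))\le \delta_t$. Since $\Res_H(X_t) = Y_{t-1} \cup Z_{t-1}$ has degree $\alpha_{t-1}+z_{t-1}$, one always has $h^0\ge \delta_t$ from the Euler characteristic relation $h^0-h^1=\binom{t+2}{3}-(\alpha_{t-1}+z_{t-1})$, so this inequality is equivalent to good postulation of $\Res_H(X_t)$ in degree $t-1$, i.e.\ to $\B(t)$, in both cases of the definition of good postulation.

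I would first set $V:=H^0(\mathbb{P}^3,\I_{Y_{t-1}}(t-1))$ and let $V_H\subset V$ denote the subspace of sections vanishing along $H$. Multiplication by the linear form defining $H$ identifies $V_H$ with $H^0(\mathbb{P}^3,\I_{\Res_H(Y_{t-1})}(t-2))$, and restriction to $H$ gives an injection $V/V_H\hookrightarrow H^0(H,\I_{Y_{t-1}\cap H,H}(t-1))$. From $\A(t-1)$ we read $\dim V=\max(0,\binom{t+2}{3}-\alpha_{t-1})$, and $\C(t)$ gives $\dim V_H\le\delta_t$.

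The main step is to argue that $Z_{t-1}$ imposes $\min(z_{t-1},\dim V/V_H)$ independent conditions on $V/V_H$. Indeed, $V/V_H$ is a linear system of plane curves of degree $t-1$ on $H$ whose base locus is a proper subscheme of $H$; hence a general point of $H$ avoids this base locus and imposes one independent condition, and iterating gives the claimed rank. Since Lemma~\ref{lemma-a} places the simple points of $Z_{t-1}$ at general positions on $H$, we conclude
\[
h^0(\I_{Y_{t-1}\cup Z_{t-1}}(t-1))\;=\;\dim V-\min(z_{t-1},\,\dim V-\dim V_H)\;=\;\max(\dim V-z_{t-1},\,\dim V_H).
\]
To finish I would split into two cases: if $\alpha_{t-1}+z_{t-1}\le\binom{t+2}{3}$, then $\dim V-z_{t-1}=\delta_t$ while $\dim V_H\le\delta_t$, so the maximum is at most $\delta_t$; if instead $\alpha_{t-1}+z_{t-1}>\binom{t+2}{3}$, then $\delta_t=0$ and both $\dim V-z_{t-1}\le 0$ and $\dim V_H\le 0$, so again the maximum is at most $\delta_t$.

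The main obstacle I anticipate is verifying the genericity claim underlying the ``a general point imposes an independent condition'' step, namely that the simple points of $Z_{t-1}$ lie in sufficiently general position on $H$ with respect to the linear subspace $V/V_H$. This genericity is in principle built into the differential Horace construction in Lemma~\ref{lemma-a}, since every simple point of $\Res_H(X_t)$ can be traced back to a general point of $H$ introduced during the construction; nevertheless the detailed bookkeeping across the various cases of Lemma~\ref{alehir} has to be handled with care.
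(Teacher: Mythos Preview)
Your argument is correct and is essentially the content of \cite[Lemma~7]{bb}, which is exactly what the paper invokes to prove this claim; so your approach coincides with the paper's, only with the cited lemma unpacked. The genericity concern you raise is handled just as you suggest: the simple points of $Z_{t-1}$ are supported at general points of $H$ chosen independently of the supports of $Y_{t-1}$, so by semicontinuity one may move them to impose independent conditions on the image of $V$ in $H^0(H,\mathcal{O}_H(t-1))$.
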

\begin{proof} Recall that we write $\Res_H(X_t)=Y_{t-1}\cup 
Z_{t-1}$,where $Z_{t-1}$ is a union of simple points supported on 
$H$.
By \cite[Lemma 7]{bb},  to check that the scheme $\Res_H(X_t)$ 
has good postulation in degree $t-1$ (i.e.\ ${\B}(t)$), it is 
sufficient to check the good postulation of $Y_{t-1}$ in degree $t-1$ 
 (i.e.\ ${\A}(t-1)$) and to prove that ${\C}(t)$ is true.\end{proof}

\begin{claim}\label{claim-e}If $Y_t$ is of type (I,$t$), then ${\B}(t-1)$ 
implies ${\C}(t)$.\end{claim}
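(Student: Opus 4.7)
The plan is to combine the Castelnuovo sequence (Lemma~\ref{castelnuovo}) with the Horace differential lemma (Lemma~\ref{alehir}). As a first step, since $Z_{t-1}$ is a disjoint union of simple points on $H$, one has $\Res_H(Y_{t-1}\cup Z_{t-1})=\Res_H(Y_{t-1})$, and the injection in the Castelnuovo sequence yields
$$
  \hh^0(\PP^3,\I_{\Res_H(Y_{t-1})}(t-2)) \le \hh^0(\PP^3,\I_{Y_{t-1}\cup Z_{t-1}}(t-1)).
$$
I would then apply the Horace differential lemma to the off-hyperplane components of $Y_{t-1}$, reusing the same layer data that defined $X_{t-1}$, and use Castelnuovo on the resulting virtual specialization $X_{t-1}\cup Z_{t-1}$ to bound $\hh^0(\I_{Y_{t-1}\cup Z_{t-1}}(t-1))$ in terms of $\Res_H(X_{t-1})$ and the trace $(X_{t-1}\cup Z_{t-1})\cap H$.

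The next step is to extract the sharp value of the resulting expression via the Castelnuovo long exact sequence associated to $X_{t-1}\cup Z_{t-1}$ and $H$. Hypothesis $\B(t-1)$ gives $\hh^1(\I_{\Res_H(X_{t-1})}(t-2))=0$, and a degree bookkeeping using that $Y_t$ is of type (I,$t$), the identity $\alpha_{t-1}=\alpha_t-\binom{t+2}{2}-z_{t-1}$, together with the Pascal relation $\binom{t+3}{3}=\binom{t+1}{3}+\binom{t+1}{2}+\binom{t+2}{2}$, yields $\hh^0(\I_{\Res_H(X_{t-1})}(t-2))=\delta_t+z_{t-1}$. Meanwhile the trace $(X_{t-1}\cup Z_{t-1})\cap H$ has degree $\binom{t+1}{2}+z_{t-1}$ (with $X_{t-1}\cap H$ saturating $\binom{t+1}{2}$ in the type-(I,$t-1$) case and $Z_{t-1}$ disjoint), so Remark~\ref{y} combined with Lemma~\ref{c1} gives good postulation on the plane with $\hh^0=0$ and $\hh^1=z_{t-1}$, and plugging these into the long exact sequence pins down $\hh^0(\I_{X_{t-1}\cup Z_{t-1}}(t-1))=\delta_t+z_{t-1}$.

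The main obstacle will be sharpening this estimate by the extra $z_{t-1}$: the naive combination of Castelnuovo and Horace yields only $\delta_t+z_{t-1}$, so to close the gap and reach $\delta_t$ I would invoke Lemma~\ref{2ago} to argue that the $z_{t-1}$ simple points of $Z_{t-1}$, although supported on $H$, impose $z_{t-1}$ independent conditions on the degree-$(t-2)$ linear system cut out by $\Res_H(Y_{t-1})$ in $\PP^3$; this independence uses the generic position of the Horace trace points together with the type-(I,$t$) assumption on $Y_t$. Combining the resulting dimension drop with the Castelnuovo-plus-Horace estimate from the previous paragraph delivers $\hh^0(\I_{\Res_H(Y_{t-1})}(t-2))\le\delta_t$, which is $\C(t)$.
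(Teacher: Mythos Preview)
Your Step~3 does not close the gap; it runs in the wrong direction. After Steps~1--2 you have
\[
  \hh^0\bigl(\PP^3,\I_{\Res_H(Y_{t-1})}(t-2)\bigr)\ \le\ \delta_t+z_{t-1}.
\]
Invoking Lemma~\ref{2ago} to say that the $z_{t-1}$ points of $Z_{t-1}$ impose independent conditions on $|\I_{\Res_H(Y_{t-1})}(t-2)|$ would only tell you that
\[
  \hh^0\bigl(\PP^3,\I_{\Res_H(Y_{t-1})\cup Z_{t-1}}(t-2)\bigr)\ =\ \hh^0\bigl(\PP^3,\I_{\Res_H(Y_{t-1})}(t-2)\bigr)-z_{t-1},
\]
which bounds the \emph{smaller} scheme's $\hh^0$ by $\delta_t$, not the quantity in $\C(t)$. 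No manipulation of this identity recovers $\hh^0(\I_{\Res_H(Y_{t-1})}(t-2))\le\delta_t$. Moreover, Lemma~\ref{2ago} requires a \emph{general} point of the variety, whereas the points of $Z_{t-1}$ lie on the fixed plane $H$ and are determined by the earlier Horace moves; they are certainly not general in $\PP^3$, and in fact points constrained to a hyperplane can fail to impose independent conditions (think of a linear system with $H$ as fixed component). So the lemma is not applicable here.

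The paper avoids this detour entirely. Instead of going up from $\Res_H(Y_{t-1})$ to $Y_{t-1}\cup Z_{t-1}$ and then trying to come back down, it bounds $\deg(\Res_H(Y_{t-1}))$ directly. Since $Y_t$ is of type~(I,$t$) one has $\deg(X_t\cap H)=\binom{t+2}{2}$, and by \eqref{caldo} (the output of Lemma~\ref{c1} via Lemma~\ref{lemma-a}) also $\deg(\Res_H(X_t)\cap H)\le\binom{t+1}{2}$. Because $\Res_H(Y_{t-1})=\Res_H(\Res_H(X_t))$, this yields
\[
  \deg(\Res_H(Y_{t-1}))\ \ge\ \alpha_t-\binom{t+2}{2}-\binom{t+1}{2}\ =\ \binom{t+1}{3}-\bigl(\tbinom{t+3}{3}-\alpha_t\bigr)\ \ge\ \binom{t+1}{3}-\delta_t,
\]
so that good postulation of $\Res_H(Y_{t-1})$ in degree $t-2$ immediately gives $\hh^0\le\delta_t$. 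The missing ingredient in your argument is precisely this degree estimate coming from \eqref{caldo}; once you have it, the whole Castelnuovo--Horace--fix-up cycle of your Steps~1--3 becomes unnecessary.
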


\begin{proof}The statement ${\C}(t)$ is 
true if $\hh^0(\mathbb {P}^3,\mathcal {I}_{\Res_H(Y_{t-1})}(t-2)) \le 
\delta_t.$ Note that since $\deg(X_t\cap H)=\binom{t+2}{2}$, we have$$ 
\deg(\Res_H(X_t))=\deg(Y_{t-1}\cup Z_{t-1})=\alpha_{t-1}+z_{t-1} 
=\alpha_t - \binom{t+2}{2},$$ and thus it 
follows $$\delta_t:=\max\left(0,\binom{t+2}{3} -\alpha_{t-1}-z_{t-1} 
\right)=\max\left(0,\binom{t+3}{3} -\alpha_{t}\right).$$ Notice that, 
by \eqref{caldo}, we have $\deg(\Res_H(X_{t})\cap H) \le 
\binom{t+1}{2}$. Hence, it 
follows $$\deg(\Res_H(Y_{t-1}))=\deg(\Res_H(\Res_H(X_{t})))\ge\alpha_{t}- 
\binom{t+2}{2}-\binom{t+1}{2}$$ and then, since 
$\binom{t+2}{2}+\binom{t+1}{2}=\binom{t+3}{3}-\binom{t+1}{3}$, we 
get $$ \deg(\Res_H(Y_{t-1}))\ge\binom{t+1}{3}-\binom{t+3}{3}+\alpha_t 
\ge \binom{t+1}{3}-\delta_t.$$ So in order to prove ${\C}(t)$ it is 
enough to prove that $\Res_H(Y_{t-1})$ has good postulation in degree 
$t-2$.\end{proof}

Now we are in position to prove our main result. 

\begin{proof}[Proof of  Theorem \ref{i1} for $d\ge53$] 
For all non negative integers $d\ge 53$ and $w,x,y,z$, we set 
$$\epsilon (d,w,x,y,z):= \binom{d+3}{3} - 35w -20x-10y-4z.$$ 
We will often write $\epsilon$ instead of $\epsilon (d,w,x,y,z)$
in any single step of the proofs in which the parameters 
$d,w,x,y,z$ are fixed. 

By Lemma \ref{2agoprop}, 
in order to prove 
our statement for all quadruples $(w,x,y,z)$ it is sufficient to 
check it for all quadruples $(w,x,y,z)$ such that $-13 \le \epsilon 
(d,w,x,y,z) \le 3$. We fix any such quadruple and we consider a general union $Y$ of $w$ 
5-points, $x$ 4-points, $y$ 3-points and $z$ 2-points. 

Notice that 
\begin{equation}\label{eqi0}w+x+y+z \ge \left\lceil 
\frac{1}{35}\left(\binom{d+3}{3}-3\right)\right\rceil\ge 
\frac{1}{35}\binom{d+3}{3}-\frac{3}{35} ,\end{equation} 
i.e. the 
scheme $Y$ has at least $\left\lceil 
\frac{1}{35}\left(\binom{d+3}{3}-3\right)\right\rceil$ connected 
components. 

The proof is by induction, based on Lemma 
\ref{castelnuovo}, 
and it requires different steps. 

Set $Y_d=Y$ and fix a hyperplane $H\subset\mathbb{P}^3$. 
 We can assume by generality that 
$\deg(Y_d\cap H)\le\binom{d+2}{2}$, hence we can apply Lemma \ref{lemma-a}, 
thus specializing the scheme $Y_d$ to a scheme 
$X_d$. If $Y_d$ is of type (II,$d$), then we conclude by Claim \ref{claim-a0}, 
since $d\ge 16$. Hence  we can assume that $Y_d$ is of 
type (I,$d$), and so, since $d\ge15$, 
by Claim \ref{claim-a} 
it is enough to check that the  
scheme $\Res_H(X_d)$ has good postulation in degree $d-1$. Now we write  
$\Res_H(X_d)=Y_{d-1}\cup Z_{d-1}$, where $Y_{d-1}$ is the union of all  
unreduced components of $\Res_H(X_d)$ and  
$Z_{d-1}=\Res_H(X_d)\setminus Y_{d-1}$. By Claim \ref{claim-b}, 
it is  
enough to prove that ${\A}(d-1)$ and ${\C}(d)$ are true. Notice that,  
since $d\ge18$, we get \eqref{caldo}, 
i.e.\ $\deg(Y_{d-1}\cap H)\leq 
\deg(\Res_H(X_d)\cap H)\le \binom{d+1}{2}$. Hence $Y_{d-1}$ satisfies 
condition \eqref{condizione} in degree $d-1$, then we can apply again  
Lemma \ref{lemma-a}. 

We have now two alternatives: either $Y_{d-1}$ is 
of type (I,$d-1$), or of type (II,$d-1$). 
In both cases, we note that by Claim \ref{claim-e} 
the statement ${\C}(d)$ follows from ${\B}(d-1)$, since 
$Y_d$ is of type (I,$d$). Now assume that $Y_{d-1}$ is of type (II,$d-1$). Then 
by Claim \ref{claim-a0}, 
since $d-1\ge17 $ we know that ${\B}(d-1)$ 
and ${\A}(d-1)$ are true and this concludes the proof. It remains to 
consider the case $Y_{d-1}$ of type (I,$d-1$). We apply again Claim \ref{claim-b} 
and we go on iterating the same steps.

\smallskip

Now we have two cases: either in a finite number $v$ of steps the
procedure described above gives us a scheme $X_{d-v}$ of type
(II,$d-v$), for a degree $d-v \ge 18$, or  the procedure goes on
until we get $X_{18}$, a scheme of type (I,$18$).    

In the first case, the steps of the procedure above prove that the
scheme $X_d$ has good postulation, and the statement is proved. 

Assume now that we are in the second case, i.e.\ $X_{18}$ is of type
(I,$18$), that is, $\deg(X_{18}\cap H)=\binom{20}2$. 
Note that, since $\epsilon\ge -13$, we have
\begin{equation}
\label{alfa}
\deg(X_{18})=\binom{21}{3}-\epsilon-\sum_{t=18}^{d-1} z_{t}\le \binom{21}3+13-\sum_{t=18}^{d-1}z_t.
\end{equation}
Now we want to estimate $\sum_{t=18}^{d-1} z_{t}$, 
which is the number of simple points we have removed in the steps above. 
Since we started from the scheme $Y_d$, in $d-18$ steps 
we arrived at the scheme $X_{18}$ in such a way that the case (II) never occurred.
Assume that in these $d-18$ steps we have applied $\gamma$ times Lemma \ref{alehir} 
with respect to sequences of type $(1,15,10,6,3)$, $(1,10,6,3)$, $(1,6,3)$ or $(1,3)$. 
As it is clear looking at the proof of Lemma \ref{lemma-a},
at each step the number of times we used a sequence giving as a trace a simple point is at most $2$, 
hence we have $\gamma\le 2(d-18)$.
Let $u_{18}$ denote the number of connected components of $X_{18}$. 
Hence it follows that 
\begin{equation}
\label{stima2}\sum_{t=18}^{d-1} z_{t}\ge 
w+x+y+z-2(d-18)-u_{18}.\end{equation}  
Now we need to estimate the number $u_{18}$. Let us denote by $T$ the union of components of $X_{18}$
of length $3$. Then any component of the scheme $X_{18}\setminus T$ has  length at least $4$, and $\deg(T)\leq\binom{20}2$ since the scheme $T$ is completely contained in the trace $X_{18}\cap H$.
So we have 
$$u_{18} \le \frac13\deg(T)+\frac14(\deg(X_{18})-\deg(T))\le\frac1{12}\binom{20}2+\frac14(\deg(X_{18})),$$
and using \eqref{stima2} and \eqref{eqi0}, we get
\begin{equation}\label{stimasemplici}
\sum_{t=18}^{d-1}z_t \ge \frac1{35}\binom{d+3}3-\frac3{35}-2(d-18)-\frac1{12}\binom{20}2-\frac14(\deg(X_{18}))).
\end{equation}
By using \eqref{alfa} and 
\eqref{stimasemplici}
we get 
$$
\frac34\deg(X_{18})\le \binom{21}3+13-\frac1{35}\binom{d+3}3+\frac3{35}+2(d-18)+\frac1{12}\binom{20}2
$$
and, since $d\ge 53$, it is easy to check that $\deg(X_{18})\leq \binom{21}3$. 
Note that $X_{18}$ depends implicitely on $d$, but we use the above inequality
to show what happend for $d=53$. Of cousre, for higher $d$'s we can easily show
that $\deg(X_{18})$ is actually even smaller, but we do not need it and 
we content ourselves with the claimed $\deg(X_{18})\leq \binom{21}3$

Hence we need to prove the vanishing $\hh^1(\mathbb {P}^3,\mathcal {I}_{X_{18}}(18))=0$, and by Claim 12,  it is enough to prove that
$\hh^1(\mathbb {P}^3,\mathcal {I}_{\Res_H(X_{18})}(17))=0$.

Now we change the procedure. Denote 
$\Res_H(X_{18})=R_{17}$. 

Since $\deg(R_{17}\cap H)\le \binom{19}2$, specializing some points on $H$ we can degenerate (without applying the Horace differential lemma) the scheme $R_{17}$ to a scheme $\widetilde{R}_{17}$ in such a way that one of the following cases happens:

(1a) either $\deg( \widetilde{R}_{17}\cap H)\leq \binom{19}2-15$ and all the components of $\widetilde{R}_{17}$ are supported on $H$,

(1b) or $\binom{19}2-14 \leq\deg( \widetilde{R}_{17}\cap H)\leq \binom{19}2$. 


Denote now, in both cases, $\widetilde{R}_{17}=E_{17}\cup F_{17}$, where $E_{17}$ is supported on $H$ and $F_{17}$ is supported outside $H$.
Take now the trace $\widetilde{R}_{17}\cap H=E_{17} \cap H$ and the residual $\Res_H(\widetilde{R}_{17})=R_{16}$.

Note that in case (1a) $F_{17}=\emptyset$,
while in  case (1b) 
$$\deg(R_{16}) \le \deg(X_{18}) - \binom{20}2 - \left(\binom{19}{2}-14\right).$$

By Lemma \ref{castelnuovo}, to prove 
that
$\hh^1(\mathbb {P}^3,\mathcal {I}_{R_{17}}(17))=0$,
it is enough to prove that
$\hh^1(\mathbb {P}^2,\mathcal{I}_{E_{17}\cap H}(17))=0$ 
(by Remark \ref{y}, since $17\ge 15$)
and
$\hh^1(\mathbb {P}^3,\mathcal{I}_{R_{16}}(16))=0.$

Now  we repeat the same step, that is, we specialize some points on $H$ without applying the Horace differential lemma, degenerating  $R_{16}$ to $\widetilde{R}_{16}$ in such a way that one of the following cases happens:

(2a) either $\deg( \widetilde{R}_{16}\cap H)\leq \binom{18}2-15$ and all the components of $\widetilde{R}_{16}$ are supported on $H$,

(2b) or $\binom{18}2-14 \leq\deg( \widetilde{R}_{16}\cap H)\leq \binom{18}2$. 

Denote again, in both cases, $\widetilde{R}_{16}=E_{16}\cup F_{16}$, where
$E_{16}$ is supported on $H$ and $F_{16}$ is supported outside $H$.

Note that $E_{16}$ is given by quintuple, quartuple, triple, double and simple points or virtual schemes arised by the application of Lemma 10. 
In any case taking the residual with respect to $H$ five times we get that the last residual has no components supported on $H$.

Take now the trace $\widetilde{R}_{16}\cap H=E_{16} \cap H$ and the residual $\Res_H(\widetilde{R}_{16})=R_{15}$. 

Note that in case (2a) $F_{16}=\emptyset$,
while in  case (2b) 
$$\deg(R_{15}) \le \deg(X_{18}) - \binom{20}2 - \left(\binom{19}{2}-14\right)- \left(\binom{18}{2}-14\right).$$

By Lemma \ref{castelnuovo}, to prove 
$\hh^1(\mathbb {P}^3,\mathcal {I}_{R_{16}}(16))=0$,
we only need
$\hh^1(\mathbb {P}^2,\mathcal{I}_{E_{16}\cap H}(16))=0$ 
(which is true by Remark \ref{y}, since $16\ge15$)
and
$\hh^1(\mathbb {P}^3,\mathcal{I}_{R_{15}}(15))=0.$

Now, without specializing furtherly, we denote 
$R_{15}=E_{15}\cup F_{15}$, where $E_{15}$ is supported on $H$ and $F_{15}=F_{16}$ is supported outside $H$.
Take now the trace $\widetilde{R}_{15}\cap H=E_{15} \cap H$ and the residual $\Res_H(\widetilde{R}_{15})=R_{14}$. 

By Lemma \ref{castelnuovo}, to prove 
that
$\hh^1(\mathbb {P}^3,\mathcal {I}_{R_{15}}(15))=0$,
it is enough to prove that
$\hh^1(\mathbb {P}^2,\mathcal{I}_{E_{15}\cap H}(15))=0$ (which is true by 
Remark \ref{y}, since $15\ge12$ and the trace contains at most quartuple points)
and $\hh^1(\mathbb {P}^3,\mathcal{I}_{R_{14}}(14))=0.$

We repeat again the same step and we get ${R}_{14}=E_{14}\cup F_{14}$, where $E_{14}$ is supported on $H$ and $F_{14}=F_{16}$ is supported outside $H$.

Take now the trace $\widetilde{R}_{14}\cap H=E_{14} \cap H$ and the residual $\Res_H(\widetilde{R}_{14})=R_{13}$.

By Lemma \ref{castelnuovo}, to prove 
that
$\hh^1(\mathbb {P}^3,\mathcal {I}_{R_{14}}(14))=0$,
it is enough to prove that
$\hh^1(\mathbb {P}^2,\mathcal{I}_{E_{14}\cap H}(14))=0$ 
(which is true by Remark \ref{y}, 
since the trace contains at most triple points)
and
$\hh^1(\mathbb {P}^3,\mathcal{I}_{R_{13}}(13))=0.$

We repeat again the same step and we get
${R}_{13}=E_{13}\cup F_{13}$, where $E_{13}$ is supported on $H$ and $F_{13}=F_{16}$ is supported outside $H$. 

Take now the trace $\widetilde{R}_{13}\cap H=E_{13} \cap H$ and the residual $\Res_H(\widetilde{R}_{13})=R_{12}$.
 
By Lemma \ref{castelnuovo}, to prove 
that
$\hh^1(\mathbb {P}^3,\mathcal {I}_{R_{13}}(13))=0$,
it is enough to prove that
$\hh^1(\mathbb {P}^2,\mathcal{I}_{E_{13}\cap H}(13))=0$ 
(which is true by Remark \ref{y}, since the trace contains at most double points)
and
$\hh^1(\mathbb {P}^3,\mathcal{I}_{R_{12}}(12))=0.$

Now we take again for the last time the trace and the residual with respect to $H$.
Denote 
${R}_{12}=E_{12}\cup F_{12}$, where $E_{12}$ is supported on $H$ and $F_{12}=F_{16}$ is supported outside $H$.
Taking the trace and the residual we have that
 $E_{12}\cap H$ is given by general simple points in $H$ and obviously we have
 $\hh^1(\mathbb {P}^2,\mathcal{I}_{E_{12}\cap H}(12))=0$. 
  
So we need only to show that the residual 
$\Res_H(E_{12}\cup F_{12})=R_{11}$
satisfies
$\hh^1(\mathbb {P}^3,\mathcal{I}_{R_{11}}(11))=0$.

Note that the residual does not have components supported on $H$.
More precisely, $R_{11}= F_{12}=F_{16}$, that is, the residual is a general 
collection of double, triple, quartuple and quintuple points.

But Theorem \ref{th11-21} ensures that any general collection of double, triple, quartuple and quintuple points has good postulation in degree $11$.

So in order to conclude the proof of the theorem it is enough to prove the following inequality:
\begin{equation}\label{quellochevoglio}
\deg(R_{11})=\deg(F_{16})\le \binom{14}3.
\end{equation}

Let us check this condition in any of the previous cases:
in cases (1a) and (2a) we have $F_{16}=\emptyset$ and so condition \eqref{quellochevoglio} is obviously satisfied.  
It remains to prove \eqref{quellochevoglio}
in case (2b), where
$$\deg(R_{11})\leq \deg(R_{15}) \le \deg(X_{18}) - \binom{20}2 - \left(\binom{19}{2}-14\right)- \left(\binom{18}{2}-14\right).$$

By \eqref{alfa} and \eqref{stimasemplici} we have
$$ \deg(X_{18})\le
\binom{21}3+13-
\left(\frac1{35}\binom{d+3}3-\frac3{35}-2(d-18)-\frac1{12}\binom{20}2-\frac14(\deg(X_{18})))\right)
$$
from which we obtain
$$\deg(X_{18})\le\frac43\left(
\binom{21}3+13-\frac1{35}\binom{d+3}3+\frac3{35}+2(d-18)+\frac1{12}\binom{20}2\right)
$$
and so
$$ \deg(R_{15})\le \deg(X_{18}) - \binom{20}2 - \left(\binom{19}{2}-14\right)- \left(\binom{18}{2}-14\right).$$

It is easy to check that, for any $d\ge53$ the inequality 
$$\frac43\left(
\binom{21}3+13-\frac1{35}\binom{d+3}3+\frac3{35}+2(d-18)+\frac1{12}\binom{20}2\right)
 - \binom{20}2 - \left(\binom{19}{2}-14\right)$$
 $$- \left(\binom{18}{2}-14\right)\le\binom{14}3
$$
is verified, and this implies \eqref{quellochevoglio} and completes the proof.
\end{proof}

\newpage

\section{The proof of Theorem \ref{i1} for low degrees}\label{intermedia}

In this section we discuss Theorem \ref{i1} in the remaining cases, 
that is, when the degree $d$ satisfies $11\le d\le 52$. 
In these cases the proof is based on computer calculations,
which are described explicitly in Section \ref{computer}.
Although in principle it is possible to go through all cases
in Lemma \ref{2agoprop} for $11\leq d\leq 52$, this is impractical
with nowadays computers.
In order to shorten the computational time
we need some other auxiliary theoretical results, that we develop in this section.
First we prove Theorem \ref{i1} for degrees $\ge 38$ in the special case
when we have few quintuple points (more precisely when $35w\le \frac1{12}\binom{d+3}{3}$).
Then we will present how to apply a result by Dumnicki in order 
to greatly reduce the cases to be tested by our computers.
\\

The proof of the following proposition is a modification of the argument in the previous section,
where we proved  Theorem \ref{i1} for $d\geq 53$.
\begin{proposition}\label{pochi5} 
Fix non-negative integers $d\ge 38 $, 
$w, x, y, z$ such that $35w\le \frac1{12}\binom{d+3}{3}$ and 
$$\binom{d+3}{3}-3 \le 35w+20x+10y+4z \le \binom{d+3}{3}+13.$$
Let $Y\subset \mathbb{P}^3$ 
be a general union of $w$ 5-points, $x$ 4-points, $y$ 3-points and $z$ 
2-points. Then $Y$ has good postulation in degree $d$.
\end{proposition}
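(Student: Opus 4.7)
The plan is to closely mirror the Horace-differential induction in the proof of Theorem \ref{i1} for $d\ge 53$ carried out in Section \ref{Sc}, and to modify only the final numerical estimate, so that the extra hypothesis $35w\le \tfrac{1}{12}\binom{d+3}{3}$ pushes the threshold down from $d\ge 53$ to $d\ge 38$.

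First I would apply Lemma \ref{2agoprop} to restrict attention to quadruples $(w,x,y,z)$ with $-13\le \epsilon(d,w,x,y,z)\le 3$, where $\epsilon(d,w,x,y,z):=\binom{d+3}{3}-35w-20x-10y-4z$. Setting $Y_d:=Y$ and fixing a hyperplane $H\subset\PP^3$, I iterate the specialization $Y_t\mapsto X_t\mapsto \Res_H(X_t)=Y_{t-1}\cup Z_{t-1}$ of Lemma \ref{lemma-a}, invoking Claims \ref{claim-a0}--\ref{claim-e} at each step exactly as in Section \ref{Sc}. Whenever a scheme of type $(\mathrm{II},t)$ with $t\ge 17$ arises the conclusion follows immediately from Claim \ref{claim-a0}; otherwise the recursion continues until a landing scheme $X_{18}$ of type $(\mathrm{I},18)$ is reached.

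The essential new input enters in the estimate on $\deg(X_{18})$. The argument in Section \ref{Sc} uses only the bare lower bound $w+x+y+z\ge \tfrac{1}{35}\binom{d+3}{3}-\tfrac{3}{35}$ from \eqref{eqi0}, which is tight precisely when $Y$ consists almost entirely of quintuple points. Under the hypothesis $35w\le \tfrac{1}{12}\binom{d+3}{3}$, at least $\tfrac{11}{12}\binom{d+3}{3}-3$ units of $\deg(Y)$ sit in components of length $\le 20$, which yields the sharper bound
$$
  w+x+y+z\;\ge\;\tfrac{1}{20}\Bigl(\tfrac{11}{12}\tbinom{d+3}{3}-3\Bigr)\;=\;\tfrac{11}{240}\tbinom{d+3}{3}-\tfrac{3}{20}.
$$
Substituting this refined component count into the chain of estimates \eqref{alfa}--\eqref{stimasemplici} replaces the coefficient $\tfrac{1}{35}$ appearing in the bottleneck inequality
$$
  \tfrac{3}{4}\deg(X_{18})\;\le\;\tbinom{21}{3}+13-\tfrac{1}{35}\tbinom{d+3}{3}+\tfrac{3}{35}+2(d-18)+\tfrac{1}{12}\tbinom{20}{2}
$$
by the strictly larger $\tfrac{11}{240}$, so that the desired conclusion $\deg(X_{18})\le \binom{21}{3}$ remains valid for every $d\ge 38$.

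The remainder of the proof is verbatim from Section \ref{Sc}: after the landing at $X_{18}$, the trace/residual cascade in degrees $17,16,\ldots,12$ vanishes via Remark \ref{y} (Mignon's threshold $d\ge 3m$ applies once the trace contains multiplicities $\le 4$), and the terminal residual $R_{11}=F_{16}$ is a general disjoint union of double, triple, quartuple and quintuple points whose good postulation in degree $11$ is supplied by Theorem \ref{th11-21}. The main obstacle will be the explicit arithmetic verification that the refined estimate still yields $\deg(R_{11})\le \binom{14}{3}$ at the boundary case $d=38$; this reduces to a one-variable polynomial inequality whose worst case occurs exactly at $d=38$ and is elementary to check once the new coefficient $\tfrac{11}{240}$ has been substituted throughout.
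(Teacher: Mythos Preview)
Your plan has a genuine gap: the final arithmetic check does \emph{not} go through at $d=38$. With the refined component count $\tfrac{11}{240}\binom{d+3}{3}-\tfrac{3}{20}$ inserted into \eqref{alfa}--\eqref{stimasemplici}, one obtains at $d=38$ (where $\binom{41}{3}=10660$) the bound
\[
\deg(X_{18})\;\le\;\tfrac{4}{3}\Bigl(\tbinom{21}{3}+13-\tfrac{11}{240}\cdot 10660+\tfrac{3}{20}+2\cdot 20+\tfrac{1}{12}\tbinom{20}{2}\Bigr)\;\approx\;1214,
\]
which indeed lies below $\binom{21}{3}=1330$, so your first claim is fine. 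But the cascade of Section~\ref{Sc} then yields, in case~(2b),
\[
\deg(R_{11})\;\le\;\deg(X_{18})-\tbinom{20}{2}-\bigl(\tbinom{19}{2}-14\bigr)-\bigl(\tbinom{18}{2}-14\bigr)\;=\;\deg(X_{18})-486\;\approx\;728,
\]
which is far above $\binom{14}{3}=364$. In fact your inequality $\deg(R_{11})\le\binom{14}{3}$ only holds for $d\ge 45$, not $d\ge 38$, so the ``elementary check'' you defer actually fails.

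The paper closes this gap by a genuinely different manoeuvre: rather than running the Section~\ref{Sc} recursion verbatim, it first specializes quintuple points onto $H$ preferentially, so that after finitely many steps the scheme $X_{d_0}$ (with $d_0\ge 20$) contains \emph{no} quintuple points at all. From that point on one can switch to the quartuple-point machinery of \cite{bb}, which lands not at degree $18$ but at degree $13$, and whose terminal residual $Y_9$ is a general union of points of multiplicity at most $4$. Good postulation of $Y_9$ in degree $9$ then comes from \cite{bb,dumnicki2} rather than Theorem~\ref{th11-21}. With the tighter landing one needs only $\deg(X_{13})\le\binom{12}{3}=220$, and the corresponding inequality
\[
\tfrac{3}{2}\Bigl(\tbinom{16}{3}+13-\tfrac{11}{240}\tbinom{d+3}{3}+\tfrac{3}{20}+2(d-13)\Bigr)\;\le\;\tbinom{12}{3}
\]
does hold precisely from $d=38$ on. The point is that merely sharpening the component count is not enough; one must also push the landing degree down, and that requires first disposing of the quintuple points.
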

\begin{proof}
We follows the same procedure as in the main proof of Section \ref{Sc}.
The first difference is that every time we apply
Lemma \ref{lemma-a}, we specialize on the plane $H$ as many quintuple points as possible. 

So starting with $Y_d=Y$, we obtain in a finite number of steps $d-d_0$ a scheme $X_{d_0}$
which does not contain quintuple points. We prove that in particular $d_0\ge 20$. 
Indeed
since by assumption we have
$35w\le \frac1{12}\binom{d+3}{3}$ and $d\ge 20$, it easily follows that 
$w\le \frac1{35}\left(\binom{d+3}{3}-\binom{22}{3}\right)$.  

Thus we have a general union $X_{d_0}$ of quartuple, triple and double points, 
and of virtual schemes of the type listed in the table of Lemma \ref{alehir}, 
arised by the application of Lemma \ref{lemma-a}.
  
If $X_{d_0}$ is of type (II,$d_0$), then we conclude, as in the previous proof, 
that it has good postulation and this implies that $Y$ has good postulation. 

Let us assume that $X_{d_0}$ is of type (I,$d_0$). 
Applying again Lemma \ref{lemma-a}
we can go on with our usual argument and we will obtain or a scheme of type (II,$e$), 
for some $e\ge18$, which 
concludes the proof, or a scheme $X_{18}$ of type (I,$18$) and without quintuple points.

At this point we can apply the same argument used in the proof of
\cite[Theorem 1] {bb}, regarding union of quartuple, triple and double 
points and virtual schemes of the type listed in \cite[Lemma 4] {bb}. 
In particular we apply \cite[Lemma 8] {bb} until we get a scheme $X_e$ 
of type (II,$e$) in degree $e\ge 13$. In this case we conclude, as in \cite{bb},
that our scheme has good postulation.

Now it remains to consider the case when we get a scheme $X_{13}$ of type (I,$13$).
Notice that in this case we want to prove that 
$\hh^1({\bf{P}}^3,\mathcal {I}_{X_{13}}(13)) = 0$.

Indeed let us prove that $\deg(X_{13})\le\binom{16}{3}$.
First of all, note that, since $35w\le\frac1{12}\binom{d+3}{3}$,
\begin{equation}\label{numerocomp}
w+x+y+z \ge x+y+z \ge \frac{1}{20}\cdot\frac{11}{12}\binom{d+3}{3}-\frac3{20}
\end{equation}
and setting $\epsilon= \binom{d+3}{3} - 35w -20x-10y-4z\geq-13$ we have:
\begin{equation}\label{degghe}
\deg(X_{13})=\binom{16}{3}-\epsilon-\sum_{t=13}^{d-1} z_{t}\le \binom{16}3+13-\sum_{t=13}^{d-1}z_t,
\end{equation}
where $z_t$ denotes, as in Section \ref{Sc}, the number of simple points we have removed
at the $(d-t)$-th step.
As in \eqref{stima2} we have
$$\sum_{t=13}^{d-1} z_{t}\ge w+x+y+z-2(d-13)-u_{13},$$
where $u_{13}$ is the number of connected component of $X_{13}$. 
Since $X_{13}$ does not contain simple points we have $u_{13}\le\frac13\deg(X_{13})$
and so by \eqref{numerocomp} we get
\begin{equation}\label{zzeta}
\sum_{t=13}^{d-1}z_t \ge 
\frac{11}{240}\binom{d+3}{3}-\frac3{20}-2(d-13)-\frac13(\deg(X_{13}))
\end{equation}
and by \eqref{degghe} we get
\begin{equation}\label{dacitare}
\deg(X_{13})\le \frac32\left(\binom{16}3+13-
\frac{11}{240}\binom{d+3}{3}+\frac3{20}+2(d-13)\right)
\end{equation}
But now it is easy to check that
$$\frac32\left(\binom{16}3+13-
\frac{11}{240}\binom{d+3}{3}+\frac3{20}+2(d-13)\right)\le \binom{16}{3}$$
as soon as $d\ge 30$.
Then it is enough to prove that
$\hh^1({\bf{P}}^3,\mathcal {I}_{X_{13}}(13)) = 0$.

Now we apply the residual without specializing any further components on $H$. 
In other words we take $Y_{12}:= \mbox{Res}_H(X_{13})$, 
$Y_{11}:= \mbox{Res}_H(Y_{12})$, 
$Y_{10}:= \mbox{Res}_H(Y_{11})$ and  
$Y_{9}:= \mbox{Res}_H(Y_{10})$. 
Notice that $\deg (Y_{12}\cap H) \le \binom{14}{2}$, 
$\deg (Y_{11}\cap H) \le \binom{13}{2}$, and $\deg (Y_{10}\cap H) \le \binom{12}{2}$. 
So by Remark \ref{y} all the vanishings
$\hh^1({\bf{P}}^2,\mathcal {I}_{Y_{12}\cap H}(12)) = 0$,
$\hh^1({\bf{P}}^2,\mathcal {I}_{Y_{11}\cap H}(11)) = 0$ and
$\hh^1({\bf{P}}^2,\mathcal {I}_{Y_{10}\cap H}(10)) = 0$
are satisfied.

Hence by Lemma \ref{castelnuovo}, 
it is sufficient to prove $\hh^1({\bf{P}}^3,\mathcal {I}_{Y_9}(9))=0$.
Recall that for any integer $t \ge 9$ a general union of quadruple, triple and double points 
has good postulation in degree $t$ by \cite{bb,dumnicki2}.  

Thus it is sufficient to prove that $\deg (Y_9)\le \binom{12}{3}$. 
Indeed obviously we have
$\deg(Y_9)\le\deg(X_{13})$.
It is easy to check that
$$\frac32\left(\binom{16}3+13-
\frac{11}{240}\binom{d+3}{3}+\frac3{20}+2(d-13)\right)
\le\binom{12}3$$
for any $d\ge38$.
Hence by \eqref{dacitare} we have $\deg(Y_9)\le\binom{12}3$
and this concludes our proof.
\end{proof}

The crucial tool which allow us to perform our computation in a reasonable time is 
the following special case of \cite[Theorem~1]{dumnicki}.
\begin{theorem}[Dumnicki]
\label{teo-dumnicki}
Let $d,k,m_1,\ldots,m_s,m_{s+1}\ldots,m_r\in 
\NN$.
If
\begin{itemize}
\item $ L_1=\LL(k;m_1,\ldots,m_s)$ is 
non-special;
\item $ L_2=\LL(d;m_{s+1},\ldots,m_r,k+1)$ is 
non-special;
\item $ \vdim L_1=-1$
\end{itemize}
then the system 
$L=\LL(d;m_1,\ldots,m_r)$ is 
non-special.
\end{theorem}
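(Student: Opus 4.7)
The plan is to prove the non-speciality of $L$ by a semicontinuity argument, degenerating the generic scheme $Y=m_1P_1+\cdots+m_rP_r$ of $L$ to a zero-dimensional scheme $Y_0$ which coincides with the general scheme $Y_2$ of $L_2$, and then invoking the hypothesis on $L_2$.

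I would first verify $\vdim L=\vdim L_2$. The assumption $\vdim L_1=-1$ reads $\sum_{i=1}^s\binom{m_i+2}{3}=\binom{k+3}{3}$, so substituting a single $(k+1)$-fat point for the cluster $m_1P_1+\cdots+m_sP_s$ preserves total degree and hence virtual dimension. Since $\dim L\ge\max(\vdim L,-1)$ is automatic, it suffices to establish the reverse inequality $\hh^0(\PP^3,\I_Y(d))\le\max(0,\vdim L+1)$.

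The central step is to construct a flat family in which the zero-dimensional scheme $m_1P_1(t)+\cdots+m_sP_s(t)$ collapses to $(k+1)Q$ as $t\to 0$, while $P_{s+1},\ldots,P_r$ are kept general and $Q\in\PP^3$ is a fixed general point. This relies on the following local observation: a zero-dimensional subscheme $W\subset\PP^3$ supported at $Q$ with $\operatorname{length}(W)=\binom{k+3}{3}$ equals $(k+1)Q$ if and only if it imposes $\binom{k+3}{3}$ independent conditions on $\hh^0(\PP^3,\mathcal{O}(k))$. Indeed, the Taylor identification $\hh^0(\PP^3,\mathcal{O}(k))\cong\mathcal{O}_Q/\mathfrak{m}_Q^{k+1}$ makes the evaluation $\hh^0(\PP^3,\mathcal{O}(k))\to\mathcal{O}_W$ surjective precisely when $\mathfrak{m}_Q^{k+1}\subseteq I_W$, and equal lengths then force equality. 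The hypothesis that $L_1$ is non-special with $\vdim L_1=-1$ says exactly that the generic $Z=m_1P_1+\cdots+m_sP_s$ imposes maximal conditions on $|\mathcal{O}(k)|$---an open condition on $\operatorname{Hilb}^{\binom{k+3}{3}}(\PP^3)$ also satisfied by $(k+1)Q$---and this openness together with the matching lengths is what allows a flat arc within a common irreducible component of the Hilbert scheme to connect $Z$ to $(k+1)Q$.

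Granting this degeneration, the flat limit $Y_0=(k+1)Q\cup m_{s+1}P_{s+1}\cup\cdots\cup m_rP_r$ coincides with the general scheme of $L_2$, so by hypothesis $\hh^0(\PP^3,\I_{Y_0}(d))=\max(0,\vdim L_2+1)=\max(0,\vdim L+1)$. Upper semicontinuity of $\hh^0$ in the flat family then yields $\hh^0(\PP^3,\I_Y(d))\le\hh^0(\PP^3,\I_{Y_0}(d))$, which completes the argument. The principal obstacle is the existence of the degeneration itself: the Hilbert scheme of points on $\PP^3$ is reducible in the relevant degree, so one must verify that the generic $Z$ and $(k+1)Q$ lie in a common irreducible component. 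The non-speciality of $L_1$ with virtual dimension $-1$ is precisely the input that pins down this component and makes the flat connection possible.
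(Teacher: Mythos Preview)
The paper does not supply a proof of this statement: it is quoted as a special case of \cite[Theorem~1]{dumnicki}, with only the remark that the hypothesis $\vdim L_1=-1$ guarantees Dumnicki's condition $(\vdim L_1+1)(\vdim L_2+1)\ge 0$. So there is nothing in the paper to compare your argument against; what remains is to assess whether your outline stands on its own.

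Your overall strategy---degenerate $m_1P_1+\cdots+m_sP_s$ to $(k+1)Q$ and invoke semicontinuity---is the natural geometric picture, and it is essentially the ``collision de gros points'' philosophy. But the argument you give for the crucial degeneration step is not correct. Your ``local observation'' is false: a length-$\binom{k+3}{3}$ scheme $W$ supported at $Q$ that imposes independent conditions on $H^0(\mathcal{O}(k))$ need \emph{not} be $(k+1)Q$. For $k=1$ take the curvilinear scheme with local ideal $(x-z^2,\,y-z^3,\,z^4)$: it has length $4$, the images of $1,x,y,z$ in its structure ring are $1,z^2,z^3,z$, which are independent in $\KK[z]/(z^4)$, so $h^0(\I_W(1))=0$; yet $W\neq 2Q$ since $z^2\notin I_W$. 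The error is in the sentence ``the Taylor identification \ldots makes the evaluation surjective precisely when $\mathfrak m_Q^{k+1}\subseteq I_W$'': the evaluation $H^0(\mathcal{O}(k))\to\mathcal{O}_W$ factors through $\mathcal{O}_Q/\mathfrak m_Q^{k+1}$ only if $\mathfrak m_Q^{k+1}\subseteq I_W$ already, so the reasoning is circular.

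Even setting that aside, the sentence ``this openness together with the matching lengths is what allows a flat arc \ldots'' does not constitute an argument. Two schemes satisfying the same open condition on the Hilbert scheme may well lie on different irreducible components, and in degree $\binom{k+3}{3}$ the Hilbert scheme of points on $\PP^3$ is genuinely reducible. You correctly flag this as ``the principal obstacle,'' but nothing you write overcomes it. Establishing that $(k+1)Q$ is an honest flat limit of a general $m_1P_1+\cdots+m_sP_s$ under the hypothesis on $L_1$ is real work (compare the collision techniques of Ciliberto--Miranda and Evain, or Dumnicki's own reduction arguments), and it is the entire content of the theorem.
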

\begin{remark}
To obtain Theorem 
\ref{teo-dumnicki} we have applied 
\cite[Theorem 1]{dumnicki} to 
the case $n=3$ and $\vdim(L_1)=-1$,
since the latter clearly 
guarantees $(\vdim L_1 +1)(\vdim L_2 +1)\geq 0$.
Although this is 
apparently very restrictive, in practice it is very difficult
to find 
different applications which perform 
efficiently.
\end{remark}

The next three lemmas explain how to use Theorem \ref{teo-dumnicki} in order to reduce the computations.

\begin{lemma}\label{primo}
Fix a positive integer $d$ and let $N=\binom{d+3}{3}$. 
For any quadruple of non-negative integers $(w,x,y,z)$, let $Y(w,x,y,z)\subset \mathbb{P}^3$ 
denote a general union of $w$ 5-points, $x$ 4-points, $y$ 3-points and $z$ 
2-points. 
If $Y(w,x,y,z)$ has good postulation in degree $d$ for any quadruple $(w,x,y,z)$ such that
$$0\le z\le 4,$$
$$N-3 \le 35w+20x+10y+4z \le N+\Delta,$$
where $\Delta$ is defined as in \eqref{deltaaa},
then any general quintuple fat point scheme has good postulation in degree $d$.
\end{lemma}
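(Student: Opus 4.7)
The plan is to reduce arbitrary quadruples $(w,x,y,z)$ to the special case $0\le z\le 4$ by trading batches of five double points for one additional quartuple point via Dumnicki's theorem.

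First, I would invoke Lemma \ref{2agoprop} to restrict attention to quadruples with $N-3 \le 35w+20x+10y+4z \le N+\Delta$. Then I would argue by induction on $z$. The base case $0\le z\le 4$ is precisely the hypothesis of the present lemma, so there is nothing to prove there.

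For the inductive step, suppose $z \ge 5$. I would apply Theorem \ref{teo-dumnicki} with $k = 3$ and $L_1 = \LL(3;2^5)$, which by Lemma \ref{punti-new}(1) is non-special with $\vdim L_1 = -1$. Taking $m_1=\cdots=m_5=2$ to be five of the double points of $Y(w,x,y,z)$, the auxiliary system becomes
$$L_2 = \LL(d;5^w,4^{x+1},3^y,2^{z-5}),$$
since the single new multiplicity $k+1=4$ prescribed by the theorem amounts to incrementing $x$ by one. The degree sum of the reduced quadruple $(w,x+1,y,z-5)$ equals $35w+20x+10y+4z$, so both the lower bound $N-3$ and an upper bound of the form $N+\Delta'$ (for the $\Delta'$ attached to the new quadruple) continue to hold. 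Since $z-5 < z$, the induction hypothesis gives good postulation of $Y(w,x+1,y,z-5)$, i.e.\ $L_2$ is non-special, and Theorem \ref{teo-dumnicki} then yields non-speciality of $\LL(d;5^w,4^x,3^y,2^z)$, as required.

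The only subtlety, and the main point requiring care, is verifying $\Delta' \ge \Delta$ so that the reduced quadruple still lies in the range covered by the inductive statement. When $z-5>0$ both values equal $1$. When $z=5$, the new quadruple has $z=0$ and $x+1 \ge 1$, so $\Delta' \in \{4,8\}$ according to whether $y>0$ or $y=0$; in either case $\Delta' \ge \Delta = 1$. Beyond this bookkeeping no obstacle arises: the proof is a clean two-step reduction combining Lemma \ref{2agoprop} with Dumnicki's splitting, enabled by the concrete non-speciality input supplied by Lemma \ref{punti-new}(1).
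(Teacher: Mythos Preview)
Your argument is correct and follows essentially the same route as the paper: invoke Lemma~\ref{2agoprop} to restrict to the degree range, then iteratively apply Theorem~\ref{teo-dumnicki} with $L_1=\LL(3;2^5)$ (non-special of virtual dimension $-1$ by Lemma~\ref{punti-new}(1)) to trade five double points for one quartuple point until $z\le 4$. Your explicit check that $\Delta'\ge\Delta$ after each reduction is a useful detail that the paper leaves implicit.
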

\begin{proof}
Let $Y=Y(w,x,y,z)$ be a general quintuple fat point scheme.
Recall that by Lemma \ref{2agoprop} it is enough to prove the good postulation of $Y$ when
$N-3 \le 35w+20x+10y+4z \le N+\Delta.$
Now assume that $z\geq5$. By Lemma \ref{punti-new} we know that $\LL(3;2^5)$ is non-special and $\vdim(\LL(3;2^5))=-1$.
Then by Theorem \ref{teo-dumnicki} in order to prove that
$Y$ has good postulation in degree $d$, it is enough to prove that $Y(w,x+1,y,z-5)$ has good postulation.
Repeating this step, we reduce to the case when $z\le 4$, and this proves our lemma.
\end{proof}

\begin{lemma}\label{secondo}
Fix a positive integer $d$ and let $N=\binom{d+3}{3}$. 
Given non-negative integers $q,w,x,y,z$, let $Y(w,x,y,z)\subset \mathbb{P}^3$ 
denote a general union of $w$ 5-points, $x$ 4-points, $y$ 3-points and $z$ 
2-points and let $Y'(q,w,x,y,z)$ denote the union of $q$ general $10$-fat points with $Y(w,x,y,z)$.
If $Y'(q,w,x,y,z)$ has good postulation in degree $d$ for any quintuple $(q,w,x,y,z)$ such that
$$0\le z\le 4,\quad 0\le 2x+y\le 21,\quad 0\le w\le 3\ \mbox{ or }\ 0\le x\le3,$$ 
$$N-3 \le 220q+35w+20x+10y+4z \le N+\Delta,$$
where $\Delta$ is defined as in \eqref{deltaaa},
then any general quintuple fat point scheme has good postulation in degree $d$.
\end{lemma}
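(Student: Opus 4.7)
The plan is to adapt the strategy of Lemma \ref{primo}, chaining three applications of Theorem \ref{teo-dumnicki} to trade configurations of low-multiplicity points for $10$-fat points. The three non-special linear systems of virtual dimension $-1$ supplied by Lemma \ref{punti-new} give exactly the right combinatorial building blocks. Starting from a general quintuple fat point scheme $Y=Y(w,x,y,z)$ in degree $d$, I first invoke Lemma \ref{2agoprop} to reduce to quadruples satisfying $N-3\le 35w+20x+10y+4z\le N+\Delta(w,x,y,z)$.

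I then perform three reduction steps in this order. \textbf{Step A:} if $z\ge 5$, take $L_1=\LL(3;2^5)$ from Lemma \ref{punti-new}(1) (with $k=3$) and use Theorem \ref{teo-dumnicki} to replace five $2$-points by one $4$-point, iterating until $z\le 4$; this is exactly the argument used in Lemma \ref{primo}. \textbf{Step B:} if $2x+y\ge 22$, choose non-negative integers $a\le x$, $b\le y$ with $2a+b=22$ (for instance $a=\min(x,11)$ and $b=22-2a$; the hypothesis $2x+y\ge 22$ guarantees $b\le y$), and apply Theorem \ref{teo-dumnicki} with $k=9$ and $L_1=\LL(9;4^a,3^b)$ from Lemma \ref{punti-new}(2) to trade these $a$ quartuple points and $b$ triple points for one $10$-fat point; iterate until $2x+y\le 21$. \textbf{Step C:} if $w\ge 4$ and $x\ge 4$, use $L_1=\LL(9;5^4,4^4)$ from Lemma \ref{punti-new}(3) (again with $k=9$) to trade four $5$-points and four $4$-points for one $10$-fat point, iterating until $w\le 3$ or $x\le 3$.

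Each step preserves the total length, because $5\cdot 4=20$, $20a+10b=10(2a+b)=220$, and $4\cdot 35+4\cdot 20=220$; hence the degree bound is maintained throughout, and $\Delta$ itself is non-decreasing under these moves, as one sees by inspecting \eqref{deltaaa} (each move either leaves the lowest positive multiplicity unchanged or eliminates it, in which case $\Delta$ strictly increases). Performing A, B, C in this order preserves the earlier invariants cleanly: Steps B and C leave $z$ untouched, and Step C only decreases $x$, so the conditions $z\le 4$ and $2x+y\le 21$ persist once established. The final configuration $Y'(q,w',x',y',z')$ thus lies in the hypothesis range of the lemma, and chaining Theorem \ref{teo-dumnicki} backward through the reductions transfers its good postulation back to the original $Y$.

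The main obstacle is bookkeeping rather than any geometric difficulty: one must verify the existence of a valid pair $(a,b)$ in Step B, confirm that each reduction stays within the prescribed degree range via the $\Delta$ case analysis, and check that executing the three steps in the chosen order does not disturb invariants already achieved. All three are routine once the relevant numerical identities and the case analysis of \eqref{deltaaa} are carried out.
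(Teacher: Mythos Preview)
Your proof is correct and follows essentially the same approach as the paper's: reduce to $z\le 4$ via $\LL(3;2^5)$ (the paper simply cites Lemma \ref{primo} for this), then iterate with $\LL(9;4^a,3^b)$ to force $2x+y\le 21$, and finally with $\LL(9;5^4,4^4)$ to force $w\le 3$ or $x\le 3$. Your treatment is in fact slightly more careful than the paper's, since you explicitly exhibit a valid pair $(a,b)$ in Step B and verify that the degree and the $\Delta$-bound are preserved under each move.
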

\begin{proof}
Let $Y=Y(w,x,y,z)$ be a general quintuple fat point scheme.
As in the proof of Lemma \ref{primo} we can assume
$N-3 \le 35w+20x+10y+4z \le N+\Delta$
and $z\le 4$.

Now assume that $2x+y\ge 22$. Then there exist two integers $a,b$ 
such that $a\le x$ and $b\le y$ and $2a+b=22$.
By Lemma \ref{punti-new} we know that the linear system 
$\LL(9;4^a,3^b)$ is non-special and with virtual dimension $-1$. 
So by Theorem \ref{teo-dumnicki} in order to prove that
$Y$ has good postulation in degree $d$, it is enough to prove that $Y'(1,w,x-a,y-b,z)$ has good postulation.
Repeating this step, we reduce to check all the general unions $Y'(q,w,x',y',z)$ such that
$N-3 \le 220q+35w+20x+10y+4z \le N+\Delta$ and $2x'+y'\le21$.

Now assume that $w\ge4$ and $x'\ge4$. 
By Lemma \ref{punti-new} we know that $\LL(9;5^4,4^4)$ is non-special and with virtual dimension $-1$. 
Thus by Theorem \ref{teo-dumnicki} it is enough to prove that $Y'(q+1,w-4,x'-4,y',z)$ has good postulation.
Repeating this step, we complete the proof of the lemma.
\end{proof}

\begin{lemma}\label{terzo}
Fix an integer $d\ge38$ and let $N=\binom{d+3}{3}$.
Given non-negative integers $r,w,x,y,z$, let $Y(w,x,y,z)\subset \mathbb{P}^3$ 
denote a general union of $w$ 5-points, $x$ 4-points, $y$ 3-points and $z$ 
2-points and let $Y''(r,w,x,y,z)$ denote the union of $r$ general $13$-fat points with $Y(w,x,y,z)$.
If $Y''(r,w,x,y,z)$ has good postulation in degree $d$ for any quintuple $(r,w,x,y,z)$ such that
$$0\le z\le4,\quad 0\le w\le 12,\quad 0\le 2x+y\le 41,$$
$$N-3 \le 455r+35w+20x+10y+4z \le N+\Delta,$$
where $\Delta$ is defined as in \eqref{deltaaa},
then any general quintuple fat point scheme has good postulation in degree $d$.
\end{lemma}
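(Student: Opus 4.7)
The plan mirrors the argument of Lemma \ref{secondo}, using Theorem \ref{teo-dumnicki} together with the linear system of Lemma \ref{punti-new}(4) to reduce the general case to the target range. Let $Y=Y(w,x,y,z)$ be a general quintuple fat point scheme. By Lemma \ref{2agoprop} I may assume the degree bound $N-3 \le 35w+20x+10y+4z \le N+\Delta$, and by iterating the reduction $\LL(3;2^5)$ as in Lemma \ref{primo} I may further assume $z \le 4$.

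If $35w \le N/12$, then Proposition \ref{pochi5} directly gives good postulation of $Y$, which settles the argument. So I may assume $35w > N/12$, equivalently $w > N/420$; for $d \ge 38$ this forces $w$ to be comfortably large (at least $26$ for $d=38$, and more for larger $d$). In this range I will apply Theorem \ref{teo-dumnicki} with two choices of non-special linear systems from Lemma \ref{punti-new}(4):
\begin{itemize}
\item[(i)] $\LL(12;5^{13})$ (the choice $a=13,\ b=c=0$), which trades $13$ 5-points for one 13-fat point;
\item[(ii)] $\LL(12;5,4^b,3^c)$ with $2b+c=42$ (the choice $a=1$), which trades one 5-point together with $b$ 4-points and $c$ 3-points for one 13-fat point.
\end{itemize}

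I will first iterate reduction (ii) exactly $k_2 := \lfloor (2x+y)/42 \rfloor$ times; at each step, since $2x+y \ge 42$ until the target is reached, one can choose $b \le x$ and $c \le y$ with $2b+c=42$, exactly as in the feasibility argument used in Lemma \ref{secondo}. This brings $2x+y$ into $[0,41]$. Then I will iterate reduction (i) as many times as needed to drive the updated $w$ into $[0,12]$. Since every application of (i) or (ii) replaces fat points of total length $455$ by a single 13-fat point of the same length, the constraint $N-3 \le 455r+35w+20x+10y+4z \le N+\Delta$ is preserved, and the final scheme $Y''(r,w',x',y',z)$ lies in the target range of the hypothesis. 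Good postulation of $Y$ then follows by Theorem \ref{teo-dumnicki}.

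The main obstacle is to check that the supply of 5-points never runs dry during the procedure. This essentially reduces to the single inequality $k_2 \le w$, which, combined with $w > N/420$ (from $35w > N/12$) and the bound $2x+y \le (N+\Delta-35w)/10$ (from the degree constraint together with $z \ge 0$), is equivalent to the numerical estimate $N \ge 12\Delta$. The latter is trivially satisfied for $d \ge 38$ since $\Delta \le 13$ while $N \ge 10660$, so the plan goes through without obstruction.
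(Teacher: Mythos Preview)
Your proposal is correct and follows essentially the same approach as the paper's proof. The only cosmetic difference is the order of the case split: the paper sets $\alpha=\lfloor(2x+y)/42\rfloor$ and splits on $w\le\alpha-1$ versus $w\ge\alpha$, invoking Lemma~\ref{aritmetico} to deduce $35w\le\frac{1}{12}N$ in the first case, whereas you split directly on $35w\le N/12$ and then verify $k_2\le w$ in the complementary case---this is exactly the contrapositive of Lemma~\ref{aritmetico}, and your numerical check $N\ge 12\Delta$ amounts to the same inequality.
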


\begin{proof}
Let $Y=Y(w,x,y,z)$ be a general quintuple fat point scheme.
As in the proof of Lemma \ref{primo} we can assume
$N-3 \le 35w+20x+10y+4z \le N+\Delta$
and $z\le 4$.

Let $\alpha=\lfloor\frac{2x+y}{42}\rfloor$.
Now if $w \le \alpha -1$, then by Lemma \ref{aritmetico} we also have
$35w\leq\frac1{12}\binom{d+3}3$ and we can apply 
Proposition \ref{pochi5} which says that $Y$ has good postulation.

Assume now that $w \ge \alpha$. 
For $1 \le i \le \alpha$, let $a_i,b_i$ be such that 
$2a_i+b_i = 42$ for all $i$, $\sum _{i=1}^{\alpha }a_i \le x$ and $\sum _{i=1}^{\alpha } b_i \le y$. 
Note that by Lemma \ref{punti-new} all the linear systems 
$\mathcal {L}_3(12;5,4^{a_i},3^{b_i})$ are non-special and with virtual dimension $-1$,
for $1 \le i \le \alpha$.

Then in order to prove that $Y$ has good postulation in degree $d$, 
we apply $\alpha$ times Theorem \ref{teo-dumnicki} and  
we reduce to prove that $Y''(\alpha,w-\alpha,x-\sum a_i,y-b_i,z)$.
So we have to check all the unions of the form
$Y''(r,w',x',y',z)$, where 
$0\le 2x+y\le 41$
and
$N-3 \le 455r+35w'+20x'+10y'+4z \le N+\Delta$.

Now assume that $w'\ge13$ and recall that by Lemma \ref{punti-new} the linear system
$\mathcal {L}_3(12;5^{13})$ is non-special and with virtual dimension $-1$.
Then applying Theorem \ref{teo-dumnicki} we reduce to the case when the number of quintuple points 
is less or equal then $12$, and this completes the proof.
\end{proof}

We are now in position to complete the proof of Theorem \ref{i1}.

\begin{proof}[Proof of  Theorem \ref{i1} for $11\le d\le 52$]\ \\
Let $d$ satisfy $11\le d\le 21$ and let $N=\binom{d+3}{3}$.
Lemma \ref{primo} says that to prove the good postulation of any general union
it is enough to check all the general unions 
with $0\le z\le 4,$ and
$N-3 \le 35w+20x+10y+4z \le N+\Delta$, where $\Delta$ is defined as in \eqref{deltaaa}.
This is precisely Theorem \ref{th11-21}.

Now assume that $22\le d\le 37$. 
By Lemma \ref{secondo} it is enough to prove that a general 
union of $q$ $10$-points, $w$ quintuple points, $x$ quartuple points,
$y$ triple points and $z$ double points has good postulation, when
$0\le z\le 4$, $0\le 2x+y\le 21$, $0\le w\le 3$ or $0\le x\le3$ and
$N-3 \le 220q+35w+20x+10y+4z \le N+\Delta$.
This is Theorem~\ref{th22-37}. 

Finally if $38\leq d\leq 52$, Lemma \ref{terzo} proves that 
it is enough to check all the general unions of
$r$ $13$-points, $w$ quintuple points, $x$ quartuple points,
$y$ triple points and $z$ double points have good postulation, when
$0\le z\le4$, $0\le w\le 12$, $0\le 2x+y\le 41$ and
$N-3 \le 455r+35w+20x+10y+4z \le N+\Delta.$
This is precisely Theorem \ref{th38-52}. 

This concludes the proof of Theorem \ref{i1}.
\end{proof}


\section{A computational proof for the remaining cases}
\label{computer}

In this section we show how several computer calculations
allow to prove Lemma \ref{punti-new}, Theorem \ref{th11-21}, \ref{th22-37},
\ref{th38-52}.

The core of our computation is a programme {\texttt{exact\_case.magma}},
that can be found at 
\begin{verbatim}
http://www.science.unitn.it/~sala/fat_points
\end{verbatim}
We can idealize the operations performed by {\texttt{exact\_case.magma}}
as in the following pseudo-code description of a routine called
{\texttt{exact}}.
\\
\hrule
\begin{center}
{\bf Exact}
\end{center}
\noindent
{\tt Input}\\
$(\texttt{w,x,y,d})$.
\vskip 0.3cm
{\small{
\begin{verbatim}
 N:=Binomial(d+3,3);
 MonomialMatrices(MList);
 L:=35*w+20*x+10*y+4*z; // Length
 // We create the matrix and compute its rank
 BigM:=EvaluationMatrix(MList,q,w,x,y,z);
 r:=Rank(BigM);
 // We check the speciality
 if ((L lt N) and (r ne L)) then
   WriteToFile([q,w,x,y,z]);
 end if;
 if ((L ge N) and (r ne N)) then
   WriteToFile([q,w,x,y,z]);
 end if;
 WriteToFile(certificate);
\end{verbatim}
}}
\hrule
\ \\

The first function 
\texttt{MonomialMatrices} creates a list of
matrices 
$$
  \mbox{\texttt{MList}}=\{M_2,M_3,M_4,M_5\}
$$ 
with monomials entries, where for all matrices the columns correspond 
to all degree-$d$ monomials in four variables, and the rows of $M_h$ 
correspond to the conditions (partial derivatives) of points with multiplicity 
$h$.\\
This list is passed to function \texttt{EvaluationMatrix} 
alongside
with the number of points of given multiplicities.\\ 
The 
function
\texttt{EvaluationMatrix} creates a set of corresponding 
random points with coordinates in the finite field $\FF_p$.
The matrices in \texttt{MList} are evaluated at this set.\\
The output matrix is stored into \texttt{BigM}, whose rank 
is computed
immediately afterwards.\\
Depending on the rank and on the length, if the point configuration is special
then a line is written, otherwise no extra output is needed (see later for
a discussion on the certificate).

Several comments on the above 
algorithm and its implementation are in 
order:
\begin{itemize}
\item The algorithm as described is 
non-deterministic because
      it uses {\em{random}} points; we have 
limited ourselves to use
      pseudorandom sequences and so we need 
to choose a {\em{seed}} (and a {\em{step}})
      whenever we launch an instance of the 
procedure, making the algorithm
      deterministic. In practice, we use the in-built
MAGMA pseudo-random generator: Magma contains an implementation of the 
{\em Monster}
random number generator by G. Marsaglia (\cite{monster}) combined with 
the MD5 hash function. The period of this generator is $2^{29430} - 2^{27382}$ and passes all tests in the Diehard test suite (\cite{DieHard}).

\item The 
bottle-neck of the algorithm is the rank computation.
      Although 
in principle it is possible to check the matrix rank
      over 
$\QQ$, in practice it is much more efficient to perform
      these 
computations over a finite field $\FF_p$, with $p$ a prime.
This is lecit thanks to Remark \ref{wow}.   
The smaller $p$ is, the faster the rank computation is (and the smaller
the memory requirement); however, a smaller prime is more likely
to trigger a wrong rank (\textit{failure}), 
      because of the 
larger number of triggered
      linear relations; therefore, it is important 
to find a prime which
      is both small enough to use a reasonable 
memory amount and large
      enough to avoid failures, if possible.
It turns out that $p=31991$ works well up to the degrees that we needed.
Its size is also very close to $2^{15}$, and so the computer will allocate
exactly $2$ bytes to represent it, without losing an overhead.

\item The rank computation itself is performed by the internal MAGMA rank 
      routine for dense matrices over finite fields. By using several
      optimization techniques, it can compute the ranks also for large matrices
      in a reasonable time. We did some tests and MAGMA's rank routine
      not only outperforms by far any other software package we tried, but
      it also competes with ad-hoc compiled programmed using specialized
      libraries, such as FFLAS-FFPACK (\cite{FFPACK}) or M4RI
      (\cite{M4ri}), although the matrices are not so large as to
      take advantage of sophisticated algorithms such as Strassen's
      (\cite{strassen}) or Winograd's (\cite{winograd}).

 \item 
      The algorithm writes a 
      {\em{digital certificate}}, i.e. a file containing
      vital information 
      enabling a third party to check the correctness 
      of the output. Our certificates vary slightly depending
      on the cases examined, but in each we need: the MAGMA's version,
      the input variables, the pair seed/step,
      the prime, the total computation time and a list of failures
      (if any).\\
      Anyone reading a certificate is able to run the corresponding
      procedure instance and verify the output (assuming that
      our same pseudorandom sequence is utilized).
\end{itemize}

\begin{remark}
\label{wow}
Let $d\ge 11$ be an integer and $p$ be a prime. 
As usual, let $\mathbb {K}$ be any field with characteristic zero. 
Given a quadruple of integers $(w,x,y,z)$, the computer finds 
(in absence of failures) a union $Y(w,x,y,z)\subset \mathbb {P}^3(\mathbb {F}_p)$ 
that is not defective in degree $d$.
By semicontinuity, this proves that a general union
of $w$ 5-points, $x$ 4-points, $y$ 3-points and $z$ 2-points defined over 
$\mathbb {F}_p$ is not defective in degree $d$.
By semicontinuity  this is true for a general union $Y(w,x,y,z)$
defined over $\overline{\mathbb {F}}_p$. 
By semicontinuity this is also true for a general union $Y(w,x,y,z)$ defined 
first over $\overline{\mathbb {Q}}$ and then over $\overline{\mathbb {K}}$. 
Thanks to Lemma \ref{www} this holds also over $\mathbb {K}$.
\end{remark}

The first cases that we checked are the small-degree cases in 
Lemma~\ref{punti-new}. The programme and the digital certificates
can be found at 
\begin{verbatim}
http://www.science.unitn.it/~sala/fat_points/small_cases
\end{verbatim}
Although cases a) and b) of Lemma~\ref{punti-new} were already known
in \cite{dumnicki2}, we redid also them for completeness and check. 

To check the cases in Theorem \ref{th11-21} we prepared a slightly
more complex programme, {\texttt{fat\_points\_brutal.magma}}.
We obviously reuse {\texttt{exact}} but we have to take into
consideration the $\Delta$ values from 
Lemma  \ref{2agoprop}.
A pseudo-code description goes as follows.\\

{\small
\hrule
\begin{center}
{\bf Check of cases 11-21}
\end{center}
\noindent
{\tt Input:}
$\texttt{d}$.
\vskip 0.3cm
{\small{
\begin{verbatim}
N:=Binomial(d+3,3);
// We determine the maximum number of points
z1:=4;y1:=Ceiling(N/10);x1:=Ceiling(N/20);w1:=Ceiling(N/35);
// We set the maximum value of _D, but since the computations are fast
// we leave it except for z>0
_D:=13;
for z in [0..z1] do
 if (z gt 0) then 
  _D:=1;
 end if;
 for y in [0..y1] do
  for x in [0..x1] do
   for w in [1..w1] do // we start from w=1, because w=0 is already in [10]
    L:=35*w+20*x+10*y+4*z; // Length
    if ((L gt N-4) and (L lt N+_D+1)) then
     exact(w,x,y,d);
    end if;
   end for;
  end for;
 end for;
end for;
\end{verbatim}
}}
\hrule
}
\ \\
The programme and the digital certificates can be found at
\begin{verbatim}
http://www.science.unitn.it/~sala/fat_points/11-21/
\end{verbatim}
We report the timings in the following table.
\begin{table}[h!]
\label{tab11-21}
\centering
\tiny{
\caption{Timings in seconds for $d=11\dots 21$ from Theorem \ref{th22-37}}
\begin{tabular}{|c|c|c|c|c|c|c|c|c|c|c|c|}
\hline
${\mathbf{d}}$ & 11    & 12    & 13    & 14  & 15   & 16    & 17   & 18  
               & 19    & 20    & 21 \\    
\hline
               & 54    & 137   & 309   & 683 & 1449 & 2879  & 5736 & 11016 
               & 19857 & 35707 & 61171 \\
\hline
\end{tabular}
}
\end{table}

We proved Theorem \ref{th22-37} similarly, using our programme
{\texttt{fat\_points\_10p.magma}}.
We do not give a pseudo-code, since now it is quite obvious how we proceed.
We note only two key differences. First of all, we used fully the advantage 
offered by the tight determination of $\Delta$. Second, we needed also
10-degree points, but this offered no difficulty, since a slight modification
of {\texttt{exact}} can handle them easily.
The programme and the digital certificates can be found at
\begin{verbatim}
http://www.science.unitn.it/~sala/fat_points/21-37/
\end{verbatim}
We report the timings in the following table. We did also
the defective case $d=21$ as a sanity check.
\newpage
\begin{table}[h!]
\label{tab21-37}
\centering
\tiny{
\caption{Timings in seconds for $d=21\dots 37$ from Theorem \ref{th22-37}}
\begin{tabular}{|c|c|c|c|c|c|c|c|c|c|}
\hline
${\mathbf{d}}$ & 21      & 22 
& 23           & 24            & 25       & 26 & 27 & 28 & \\  
\hline
             & 3539  &    5137 &  7557    &   10911    &  18020     &  20535  & 
29089 & 40221 & \\     
\hline
${\mathbf{d}}$ & 29       & 30   & 31      & 32     & 33     & 34    & 35    & 
36    &  37  \\  
\hline
             & 53583    & 87968 & 107677 & 143758 & 194358 & 255239 & 378412 & 
511234 & 695840 \\
\hline
\end{tabular}
}
\end{table}

\indent
Finally, we proved Theorem \ref{th38-52} in a similar manner, 
by using our programme {\texttt{fat\_points\_13p.magma}}.
Again, a slight modification of {\texttt{exact}} was needed in order
to handle $13$-degree points.
The programme and the digital certificates can be found at
\begin{verbatim}
http://www.science.unitn.it/~sala/fat_points/38-52/
\end{verbatim}
The timings are reported in the following table
\begin{table}[h!]
\label{tab38-52}
\centering
\tiny{
\caption{Timings in seconds for $d=38\dots 52$ from Theorem \ref{th38-52}}
\begin{tabular}{|c|c|c|c|c|c|c|c|c|c|}
\hline
${\mathbf{d}}$ & 38   & 39   & 40   & 41   & 42   & 43   & 44   & 45    \\  
\hline
               &147495&158191&198248&202834&216555&232417&245465&325837 \\     
\hline
${\mathbf{d}}$ & 46   & 47   & 48   & 49   & 50   & 51   & 52    &  \\  
\hline
               &323154&373451&460022&517266&717031&783861&1200723&   \\
\hline
\end{tabular}
}
\end{table}

By observing the timings, we note an exponential behaviour (in $d$)
for Table~1, approximately of behaviour $2^d$.
This is easily explained, because the cost of the rank computation grows as
$d^3$, but the number of cases to be examined grows exponentially.
A similar behaviour can be seen in Table~2, where the
times grow like $(1.4)^d$. Indeed the reason why these latter computations
are feasible lies in the significant cut in the number of cases to
be observed.
However, the real case thinning happens in Table~3,
where the grows is only {\em cubic} in $d$.
This fall from an exponential behaviour to a polynomial one can 
be explained only in a more-or-less constant number of cases to
be considered (the cubic cost being unavoidable due to the cost
of the rank computation). On the other hand, in Theorem \ref{th38-52}
$r$ can take only two values and the other integers are strictly
bounded. As a further check, we computed the number of cases up to
$d=100$ and its maximum value is $405$.

\begin{remark}
We have used 
four Dell servers, each with two processors Intel Xeon X5460 
at 
3.16GHz (for a total of 32 processor cores) and 
with 32 GB's of RAM 
(for a total of 128 GB). 
The underlying operating
system has been Linux, kernel version 
2.6.18-6-amd64.
\end{remark}

\section{The exceptions in degree $9$ and $10$}
\label{d10}

Our main theorem states that a general fat point scheme in $\PP^3$ of
multiplicity $5$ has good postulation in degree $d\ge 11$. Here we
classify all the exceptional cases in degree $10$ and $9$.

Let us consider first the case of degree $10$.
Let $Y$ be a general union of $w$ quintuple points, $x$ quartuple points, $y$
triple points and $z$ double points. Let $N=\binom{13}{3}=286$. 
Then the linear system $L=\LL(10;5^w,4^x,3^y,2^z)$ has virtual dimension 
$\vdim(L)=286-\deg(Y)-1$ where $\deg(Y)=35w+20x+10y+4z$ and the
expected dimension is $\max\{\vdim(L),-1\}$.\\
Our programme checked all the cases with:
\begin{itemize} 
\item either $w\ge 1$ and $286-3\leq \deg(Y)\leq 286+34$, 
\item $w=7,8$ and $\deg(Y)\leq 286+34$.
\end{itemize}
The programme found only nine cases of bad postulation, listed in 
the table below.
In this table, we denote by $e$ the expected dimension 
of the corresponding linear system, by $r$ the rank of the matrix given by our construction, 
and by $d$ the dimension of the linear system.

\begin{table}[h!]
\label{dieci}
\small{
\caption{Exceptions in degree $10$}
\begin{tabular}{|c|c|c|c|c|c|c|c|c|}
\hline    
$w$&$x$&$y$&$z$&$\min(\deg(Y),N)$&$e$&$r$& $d$\\
\hline
9& 0& 0& 0& 286&-1 & 285& 0 \\
\hline
8& 1& 0& 0& 286&-1 & 284& 1\\
8& 0& 1& 1& 286&-1  & 285& 0\\
8& 0& 1& 0& 286&-1 & 283& 2\\
8& 0& 0& 2& 286&-1  & 284& 1\\
8& 0& 0& 1& 284&1  & 282& 3\\
\hline
7& 2& 0& 1& 286&-1  & 284& 1\\
7& 1& 2& 0& 285&0  & 284& 1\\
7& 2& 0& 0& 285&0  & 280& 5\\
\hline
\end{tabular}
}
\end{table}

From this computation we obtain the following classification:
\begin{theorem}
In $\PP^3$ a general union $Y$ of $w$ 5-points, $x$ 4-points, $y$
3-points and $z$ 2-points has good postulation in degree 
$10$, except if the 4-tuple $(w,x,y,z)$ is one of those listed 
in Table \ref{dieci}.
\end{theorem}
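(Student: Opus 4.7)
The plan is to combine a refined version of the reduction from Lemma \ref{2agoprop} with a direct computer check over a sufficiently wide range of quadruples. For $d = 10$, set $N = \binom{13}{3} = 286$. The case $w = 0$ is already settled by Remark \ref{yy}, so we may assume $w \geq 1$. I want to identify all $(w,x,y,z)$ for which the linear system $L = \mathcal{L}_3(10;5^w,4^x,3^y,2^z)$ is special.

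First, I would revisit the arguments behind Lemma \ref{2agoprop} in order to delimit the quadruples that genuinely need verification. If $\deg(Y) < N$, enlarging $Y$ by extra quintuple, quartuple, triple or double points produces a scheme $Y' \supseteq Y$ with $\deg(Y') \geq N$, and the long exact sequence associated with $\mathcal{I}_{Y'} \subset \mathcal{I}_Y$ forces $h^1(\mathcal{I}_Y(10)) = 0$ whenever $h^1(\mathcal{I}_{Y'}(10)) = 0$. Symmetrically, if $\deg(Y) > N$, Lemma \ref{2ago} controls $h^0(\mathcal{I}_Y(10))$ starting from a sub-scheme near the balanced range. In both directions the reduction is valid unless the intermediate scheme happens to be one of the sought exceptions; consequently one must widen the checking window beyond the buffer $\Delta$ appearing in \eqref{deltaaa} so that no reduction step can bypass an exception undetected.

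Since every eventual exception will turn out to involve $7 \leq w \leq 9$, I would prescribe the checking window as follows: for every $w \geq 1$, test all $(w,x,y,z)$ with $N - 3 \leq \deg(Y) \leq N + 34$; additionally, for $w \in \{7,8\}$, test all $(w,x,y,z)$ with $\deg(Y) \leq N + 34$. The enlarged upper buffer $+34$ and the removal of the lower bound in the $w \in \{7,8\}$ layer are chosen so that each chain of additions used in the reduction reaches a quadruple that has been directly verified. I would then adapt the programme \texttt{fat\_points\_brutal.magma} from Section \ref{computer}, modifying only the loop bounds, and apply the routine \texttt{exact} to each quadruple, computing the rank of the evaluation matrix over $\mathbb{F}_{31991}$ and comparing it with $\min(\deg(Y), N)$; by Remark \ref{wow} this certifies (non-)speciality over $\mathbb{K}$.

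The main obstacle, in my view, is not the computation itself but justifying that the chosen window is wide enough that no exception is missed by the inductive reduction: one must carefully chase every possible chain of point additions or removals used to reduce an arbitrary $(w,x,y,z)$ to the checked window, and confirm that whenever such a chain passes through a potentially exceptional quadruple, that quadruple itself lies within the explicitly verified range. Once this bookkeeping is complete, the output of the computer routine yields exactly the nine entries of Table \ref{dieci}, together with their observed dimensions; the remaining quadruples in the window are certified non-special, and the reduction then extends this verdict to all $(w,x,y,z)$ outside the window, completing the classification.
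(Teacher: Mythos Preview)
Your approach coincides with the paper's: the same checking window ($w\ge 1$ with $283\le\deg(Y)\le 320$, plus all of $\deg(Y)\le 320$ for $w\in\{7,8\}$), the same reduction via sub-~and super-schemes together with Lemma~\ref{2ago}, and the same appeal to Remark~\ref{yy} for $w=0$. The one place where the paper is more explicit than your sketch is the case $w\ge 10$, $x=y=z=0$: there the only subscheme reachable inside the window is the exceptional $(9,0,0,0)$, so one uses the computed value $\dim\mathcal{L}_3(10;5^9)=0$ together with Lemma~\ref{2ago} (adding one more general point already forces $h^0=0$) to conclude---this is the concrete instance of the ``bookkeeping'' you allude to.
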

\begin{proof}
If $w=0$, then $Y$ is a quartuple general fat point scheme and we already know by \cite{bb,dumnicki2}
that it has good postulation in degree $10$. 
We can thus assume $w>0$.

If $Y$ is a general union of degree 
$283\leq \deg(Y)\leq 320$, our programme checked that there are no other cases of bad postulation, 
except for the ones listed in the table.
 
Now if $Y$ is a general union of degree $\deg(Y)\ge 321$, 
then it contains a subscheme $Y'$ of degree $286\le \deg(Y')\le 320$ which has good postulation,
except if $Y$ is the union of $w\ge10$ quintuple points, 
where the only possible $Y'$ is given by $9$ quintuple points, which has bad postulation.
On the other hand, by our computation we know that the dimension of the linear system 
$\LL(10;5^9)$ is $0$. This means that as soon as we add a general simple point to $Y'$ 
we immediately have an empty linear system. 
This implies that any union of $w\ge10$ quintuple points has good postulation.

Now if $Y$ has degree $\deg(Y)\le 282$, then it is contained in a scheme $Y'$ 
of degree $283\le \deg(Y')\le 286$ which has good postulation, 
obtained by adding only general double points. 
The only case we need to study more carefully are 
$(w,x,y,z)=(8,0,0,0),(7,2,0,0),(8,0,1,0)$, 
which correspond to subschemes of the exceptional cases with $z>0$.
We have checked directly that the first case ($8$ quintuple points) has 
good postulation, 
while the other two are exceptional cases. This completes the proof.
\end{proof}

Some of the exceptional cases we found were already known,  see e.g.\ \cite{laface-ugaglia}
and \cite{volder-laface}. 
Note that all the exceptions we found satisfy the conjecture of Laface-Ugaglia 
(see \cite{laface-ugaglia} and \cite[Conjecture 6.3]{Laface-Ugaglia-standard}).
\\

In the case of degree $9$ we found many more exceptions, 
that we list in the following table.

\pagebreak

\begin{table}
\label{nove}
\small{
\caption{Exceptions in degree $9$}
\begin{tabular}{|c|c|c|c|c|c|c|c|c|}
\hline    
$w$&$x$&$y$&$z$&$\min(\deg(Y),N)$&$e$&$r$& $d$\\
\hline
6& 0& 1& 1& 220&-1& 219&0\\
6& 0& 1& 0& 220&-1 & 216& 3\\
6& 0& 0& 3& 220&-1& 218&1\\
6& 0& 0& 2& 218&1& 214&5\\
6& 0& 0& 1& 214&5 & 210& 9\\
6& 0& 0& 0& 210&9 & 206& 13 \\
\hline
5& 2& 0& 1& 219&0& 217&2\\
5& 2& 0& 0& 215&4& 213&6\\
5& 1& 2& 1& 219&0& 218&1\\
5& 1& 2& 0& 215&4& 214&5\\
5& 1& 1& 3& 217&2& 216&3\\
5& 1& 1& 2& 213&6& 212&7\\
5& 1& 1& 1& 209&10& 208&11\\
5& 1& 1& 0& 205&4& 204&5\\
5& 1& 0& 6& 219&0& 218&1\\
5& 1& 0& 5& 215&4& 214&5\\
5& 1& 0& 4& 211&8& 210&9\\
5& 1& 0& 3& 207&12&206&13\\
5& 1& 0& 2& 203&16& 202&17\\
5& 1& 0& 1& 199&20& 198&21\\
5& 1& 0& 0& 195&24& 194&25\\
\hline
4& 3& 2& 0& 220&-1& 218&1\\
\hline
3& 6& 0& 0& 220&-1& 218&1\\
3& 5& 1& 1& 219&0& 218&1\\
3& 5& 1& 0& 215&4&214&5\\
\hline
\end{tabular}
}
\end{table}

In this case we have tested all the configurations where 
$w\ge1$ and $220-3\leq \deg(Y)\leq 220+34$, 
and all the configurations with $1\le w\le 6$ and $\deg(Y)\leq 220-4$.

From our computational experiments we can deduce the following complete classification:
\begin{theorem}
In $\PP^3$ a general union $Y$ of $w$ 5-points, $x$ 4-points, $y$
3-points and $z$ 2-points has good postulation in degree 
$9$, except if the 4-tuple $(w,x,y,z)$ is one of those listed 
in Table~5.
\end{theorem}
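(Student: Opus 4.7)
The plan is to follow the template of the degree-$10$ classification. First, reduce to the case $w \geq 1$: if $w = 0$, then $Y$ is a general union of multiplicity at most $4$, and good postulation in degree $9$ follows from \cite{bb, dumnicki2}.

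The core of the argument is the direct computer verification from Section~\ref{computer}, applied to every configuration $(w,x,y,z)$ with $w \geq 1$ and $217 \leq \deg(Y) \leq 254$ (that is, within $[N-3, N+34]$ where $N = \binom{12}{3} = 220$), together with every configuration with $1 \leq w \leq 6$ and $\deg(Y) \leq 216$. The output exactly matches Table~\ref{nove}, and every other tested configuration has good postulation.

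The remaining ranges are handled as follows. The range $\deg(Y) \leq 216$ with $w \geq 7$ is vacuous, since $35w \geq 245 > 216$. For $\deg(Y) \geq 255$ the plan is to exhibit a subscheme $Y' \subset Y$ obtained by dropping some connected components, such that $217 \leq \deg(Y') \leq 254$ and $(w',x',y',z')$ does not appear in Table~\ref{nove}. Then $\hh^0(\PP^3, \I_Y(9)) \leq \hh^0(\PP^3, \I_{Y'}(9)) = 0$, and $Y$ has good postulation. The width of the target interval is $38$, strictly larger than the maximum component degree $35$, so some valid $\deg(Y')$ is always reachable by dropping components; the extreme case is $Y = (w,0,0,0)$ with $w \geq 8$, where one takes $Y'$ to be seven quintuple points, with $\deg(Y') = 245$ and $(7,0,0,0)$ absent from Table~\ref{nove}.

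The main obstacle is this subscheme step: one must verify that in every configuration with $\deg(Y) \geq 255$ there is room to land on a non-exceptional $Y'$ inside the target interval. Because Table~\ref{nove} is extensive, this requires a finite but somewhat tedious inspection of edge configurations near the pure-quintuple and quintuple-plus-one-quartuple corners, analogous to the degree-$10$ analysis where the authors had to invoke the precise dimension of $\LL(10; 5^9)$ to handle the pure-quintuple configurations with $w \geq 10$.
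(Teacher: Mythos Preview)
Your approach is essentially the same as the paper's: reduce to $w\ge 1$ via \cite{bb,dumnicki2}, invoke the computer verification for all configurations with $\deg(Y)\le 254$, and for $\deg(Y)\ge 255$ pass to a non-exceptional subscheme $Y'\subset Y$ in the tested range. The paper records the subscheme step with the single phrase ``it is easy to check,'' while you spell out more of the case analysis; the content is the same.

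There is one slip to fix. You take the target interval for $Y'$ to be $[217,254]$ and then assert $\hh^0(\mathbb{P}^3,\mathcal{I}_{Y'}(9))=0$. But good postulation of $Y'$ with $\deg(Y')\in\{217,218,219\}$ only gives $\hh^1=0$, hence $\hh^0=220-\deg(Y')>0$, and you cannot conclude $\hh^0(\mathbb{P}^3,\mathcal{I}_Y(9))=0$ from that. The correct target interval is $[220,254]$, matching the paper's convention. This does not damage your width argument: the interval still has width $35$, so dropping components one at a time from $\deg(Y)\ge 255$ always lands in $[220,254]$, and the subsequent inspection showing one can avoid the entries of Table~5 (e.g.\ $(7,0,0,0)$ with degree $245$ is available in the pure-quintuple case) goes through unchanged.
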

\begin{proof}
If $w=0$, then $Y$ is a quartuple general fat point scheme and it has good postulation in degree $10$
by \cite{bb,dumnicki2}. We can thus assume $w>0$.

If $Y$ is a general union of degree 
$\deg(Y)\leq 254$, our programme checked that there are no other cases of bad postulation, 
except for the ones listed in the table.

Now if $Y$ is a general union of degree $\deg(Y)\ge 255$, 
then it is easy to check that $Y$ contains a subscheme $Y'$ 
of degree $286\le \deg(Y')\le 320$ which has good postulation.
\end{proof}

\begin{remark}
Also in the case of degree $9$ all the exceptions we found satisfy the conjecture of Laface-Ugaglia 
(\cite[Conjecture 6.3]{Laface-Ugaglia-standard}).
\end{remark}

The relevant computations can be found at
\begin{verbatim}
http://www.science.unitn.it/~sala/fat_points/exceptions_9_10/
\end{verbatim}

\section{Further remarks}
\label{fur}
In this final section we provide two remarks on the field characteristics and a direct consequence
of Theorem \ref{i1}.

Since the result by Yang (Remark \ref{y}) is proved only for characteristic zero, we assume in this paper that 
$\mbox{char}({\KK})=0$.
However we underline that our proof of Theorem \ref{i1} can easily be adapted to any
$\mbox{char}({\KK}) \ne 2,3,5$. 
Hence the statement of Theorem \ref{i1} could immediately be generalized to any
characteristic different from $2,3,5$ as soon as 
we know that a general fat point scheme in $\PP^2(\mathbb{F})$ of 
multiplicity $5$ has good postulation in degree $d\ge 3m$, for any field $\mathbb{F}$ with that
characteristic, provided the result holds again for $d=11$ in $\PP^3(\mathbb{F})$.

In positive characteristic the proof of Lemma \ref{2agoprop} fails,
since we cannot make use of Lemma \ref{2ago}. 
However, following the same outline
as in the proof of Lemma \ref{2agoprop} and recalling that a fat point 
always contains a simple point, 
it is not difficult to prove the following lemma.

\begin{lemma}
Let $\FF$ be an infinite field of any characteristic.
Fix an integer $d>0$. 
For any quadruple of non-negative integers $(w,x,y,z)$, let $Y(w,x,y,z)\subset \mathbb{P}^3(\FF)$ 
denote a general union of $w$ 5-points, $x$ 4-points, $y$ 3-points and $z$ 
2-points. 
If $Y(w,x,y,z)$ has good postulation in degree $d$ for any quadruple $(w,x,y,z)$ such that
$$\binom{d+3}{3}-3 \le 35w+20x+10y+4z \le \binom{d+3}{3}+\Delta$$
where 
$$\Delta=\left\{\begin{array}{ll}
14&\mbox{if $w>0$ and $x=y=z=0$,} \\
9&\mbox{if $x>0$ and $y=z=0$,}\\
5&\mbox{if $y>0$ and $z=0$,}\\
2&\mbox{if $z>0$}
\end{array}
\right.
$$
then any general quintuple fat point scheme has good postulation in degree $d$.
\end{lemma}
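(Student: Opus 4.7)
The plan is to mirror the case split from the proof of Lemma~\ref{2agoprop}, splitting on which of $w,x,y,z$ is positive. The only modification is that Lemma~\ref{2ago} (which required $\ch(\KK)=0$) is no longer available, and is replaced by the following characteristic-free substitute, which is exactly the content of the hint ``a fat point always contains a simple point'': for any $Y \supset Y'$ whose difference is supported on $k$ points disjoint from $Y'_{\mathrm{red}}$,
\[
  \hh^0(\mathbb{P}^3, \mathcal{I}_Y(d)) \le \max\bigl\{\hh^0(\mathbb{P}^3, \mathcal{I}_{Y'}(d)) - k,\; 0\bigr\}.
\]
Indeed, each of those $k$ fat points contains its reduced point, and $k$ generic reduced points impose $k$ independent conditions as long as $\hh^0$ remains positive. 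This reduction by $k$ is weaker than Lemma~\ref{2ago}'s reduction by the multiplicity $m$, and the $+1$ increase of each $\Delta$ is precisely the slack needed to compensate.

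The small-degree direction, in which $\deg(Y) \le N - 4$ and we want $\hh^1(\mathcal{I}_Y(d)) = 0$, goes through unchanged: the proof of Lemma~\ref{2agoprop} in this range uses only the surjection $H^1(\mathcal{I}_{Y'}(d)) \twoheadrightarrow H^1(\mathcal{I}_Y(d))$ for $Y \subset Y'$, which is characteristic-free. We embed $Y$ into $Y' = Y(w,x,y,z+z')$ with $\deg(Y')$ in the hypothesis window and conclude.

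For the large-degree direction I would treat the four sub-cases in the same order as in the original proof. I illustrate the strategy on $z>0$ (new $\Delta = 2$). Set $r = \deg(Y) - N \ge 3$. If $r \bmod 4 \in \{0,1,2\}$, take $z' = \lfloor r/4\rfloor$ and $Y' = Y(w,x,y,z-z')$: then $\deg(Y') \in \{N, N+1, N+2\}$ is in the hypothesis range, so $\hh^0(\mathcal{I}_{Y'}(d)) = 0$ and the containment $Y \supset Y'$ gives $\hh^0(\mathcal{I}_Y(d))=0$. If $r \bmod 4 = 3$, take $z' = (r+1)/4$: then $\deg(Y') = N-1$ lies in the hypothesis range, $\hh^0(\mathcal{I}_{Y'}(d)) \le 1$, and the $z' \ge 1$ doubles in $Y \setminus Y'$ sit at positions outside $Y'_{\mathrm{red}}$, so the substitute statement yields $\hh^0(\mathcal{I}_Y(d)) = 0$. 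The corner $z=z'$ forces $Y'$ to have no doubles; then the hypothesis must be invoked in the next regime (whose window still contains $N-1$).

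For the three remaining sub-cases the analysis is analogous but modulo $10$, $20$, and $35$ respectively. Since the window width $\Delta + 4$ now equals $9$, $13$, $18$ — smaller than the respective jumps — some residue classes miss the base window, and one must introduce an auxiliary point of smaller multiplicity (a double, or a triple, or a quartuple/triple/double) exactly as Lemma~\ref{2agoprop} does, to shift the residue. The main obstacle, and the lengthiest part of the write-up, is the bookkeeping: checking in every residue class that we can land $\deg(Y')$ either in $[N, N+\Delta]$ (where good postulation immediately gives $\hh^0(\mathcal{I}_{Y'})=0$) or in $[N-3, N-1]$ with at least $\hh^0(\mathcal{I}_{Y'}) \le 3$ ``extra'' fat points at fresh positions in $Y \setminus Y'$, which the substitute statement then absorbs. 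This is structurally identical to the case analysis in Lemma~\ref{2agoprop}, one residue-class deeper, with the $+1$ on each $\Delta$ entering precisely at the step where Lemma~\ref{2ago} used to save a factor equal to the multiplicity.
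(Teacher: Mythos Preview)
Your proposal is correct and matches the paper's approach exactly: the paper gives no detailed proof of this lemma, stating only that one should follow the outline of Lemma~\ref{2agoprop} while replacing Lemma~\ref{2ago} by the characteristic-free fact that a fat point contains a simple point, which is precisely your substitute inequality. Your write-up in fact supplies more detail than the paper does, and your explanation of why each $\Delta$ grows by exactly $1$ (the substitute drops $h^0$ by the number of extra components rather than by the multiplicity) is the intended mechanism.
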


A straightforward consequence of Theorem \ref{i1} is the following statement, 
whose proof is contained in Remark \ref{proofofcorollary}.
\begin{corollary}\label{i1.0}
Fix non-negative integers $w, x, y, z$ such that
$$35w+20x+10y+4z \ge \binom{14}{3}.$$
Let $Y\subset \mathbb{P}^3$
be a general union of $w$ 5-points, $x$ 4-points, $y$ 3-points and
$z$ 2-points. Then $Y$ has good postulation with respect to any degree.
\end{corollary}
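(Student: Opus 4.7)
The plan is to combine Theorem \ref{i1} with the monotonicity observation recorded in Remark \ref{proofofcorollary}, applied with $n=3$ and $d_0=11$. The hypothesis $35w+20x+10y+4z\ge\binom{14}{3}$ is exactly $\deg(Y)\ge\binom{n+d_0}{n}$ for these values, so the remark is tailored to this situation.

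First, I would handle the range $d\ge 11$ directly: Theorem \ref{i1} asserts that $Y$ has good postulation in every such degree, with no extra work needed. For $d\ge 11$, since $\deg(Y)\ge\binom{14}{3}\ge\binom{d+3}{3}$ fails in general, but what matters is that good postulation in the sense of Section~2 holds, i.e., whichever of $h^0$ or $h^1$ is forced to vanish by the inequality $\deg(Y)$ vs.\ $\binom{d+3}{3}$ does vanish. This is precisely what Theorem \ref{i1} gives.

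Next I would dispatch the finitely many small degrees $0\le d\le 10$ using the standard injectivity $H^0(\PP^3,\I_Y(d-1))\hookrightarrow H^0(\PP^3,\I_Y(d))$ induced by multiplication by a general linear form. Applying this inductively from $d=11$ downward, the vanishing $h^0(\PP^3,\I_Y(11))=0$ (which follows from Theorem~\ref{i1} because $\deg(Y)\ge\binom{14}{3}$ exceeds $\binom{14}{3}$'s polynomial-space count only when one is on the $h^0=0$ side of the dichotomy; here $\deg(Y)\ge\binom{14}{3}$ places us in condition (b) of the definition of good postulation) propagates to $h^0(\PP^3,\I_Y(d))=0$ for all $d\le 10$. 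For such small $d$ we also have $\deg(Y)\ge\binom{14}{3}\ge\binom{d+3}{3}$, so condition (b) is the only one that needs to be verified, and we have just verified it.

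There is no real obstacle here — the content is entirely borrowed from Theorem \ref{i1} and the elementary monotonicity remark — so the whole argument amounts to assembling these two ingredients and noting that the numerical bound $35w+20x+10y+4z\ge\binom{14}{3}$ is exactly what is needed to invoke Remark \ref{proofofcorollary} with $d_0=11$.
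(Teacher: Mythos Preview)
Your proof is correct and follows exactly the approach indicated by the paper, namely applying Remark~\ref{proofofcorollary} with $n=3$ and $d_0=11$: Theorem~\ref{i1} handles $d\ge 11$, and since $\deg(Y)\ge\binom{14}{3}=\binom{11+3}{3}$ forces $h^0(\PP^3,\I_Y(11))=0$, the injections $H^0(\PP^3,\I_Y(d-1))\hookrightarrow H^0(\PP^3,\I_Y(d))$ propagate this vanishing down to all $d\le 10$, where condition~(b) is the only one to check.
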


\section*{Acknowledgements}

We would like to thank an anonymous referee for his/her careful inspection of a previous
version of this paper, where he spotted a nasty mistake.
The 
first and second authors were partially supported by MIUR and GNSAGA 
of
 INdAM (Italy).
The third and fourth author acknowledge support 
from the Provincia di Trento's 
grant 
``Metodi algebrici per la 
teoria dei codici correttori e la 
crittografia''.
The authors would like to thank M. Frego for his help in the
computational part.



\providecommand{\bysame}{\leavevmode\hbox 
to3em{\hrulefill}\thinspace}

\end{document}